\numberwithin{equation}{section}
\numberwithin{figure}{section}
\theoremstyle{plain}
\newtheorem{thm}{\protect\theoremname}[section]
  \theoremstyle{definition}
  \newtheorem{defn}[thm]{\protect\definitionname}
  \theoremstyle{definition}
  \newtheorem{example}[thm]{\protect\examplename}
  \theoremstyle{plain}
  \newtheorem{lem}[thm]{\protect\lemmaname}
  \theoremstyle{plain}
  \newtheorem{prop}[thm]{\protect\propositionname}
  \theoremstyle{remark}
  \newtheorem{rem}[thm]{\protect\remarkname}
  \theoremstyle{plain}
  \newtheorem{cor}[thm]{\protect\corollaryname}
\theoremstyle{plain}
\newcommand{\eqdef}{\stackrel{\mathrm{def}}{=}}
\newcommand{\N}{\mathbb{N}}
\newcommand{\R}{\mathbb{R}}
\newcommand{\C}{\mathbb{C}}
\newcommand{\Z}{\mathbb{Z}}
\newcommand{\wlim}{\operatorname{w-lim}}
\newcommand{\supp}{\mbox{supp }}
\newcommand{\vol}{{\mathrm{ vol}}}
\newcommand{\dvn}{{\mathrm dv}_{{g}^{(n)}}}
\newcommand{\dtvnx}{{\mathrm d}v_{\tilde{g}^{(n)}}(x)}
\newcommand{\dvx}{{\mathrm dv}_g(x)}
\newcommand{\dv}{{\mathrm dv}_g}
\newcommand{\nl}{\operatorname{|||}}
\newcommand{\nr}{\operatorname{|||}}
  \providecommand{\corollaryname}{Corollary}
  \providecommand{\definitionname}{Definition}
  \providecommand{\examplename}{Example}
  \providecommand{\lemmaname}{Lemma}
  \providecommand{\propositionname}{Proposition}
  \providecommand{\remarkname}{Remark}
\providecommand{\theoremname}{Theorem}
\begin{document}

\title[Defect of compactness]{Defect of compactness for Sobolev spaces on manifolds with bounded
geometry}

\author{Leszek Skrzypczak}

\thanks{One of the authors (L.S.) was supported by National Science Center,
Poland, Grant No. 2014/15/B/ST1/00164.}

\address{Faculty of Mathematics \& Computer Science, Adam Mickiewicz University,
ul. Umultowska 87, 61-614 Pozna\'{n}, Poland}

\email{lskrzyp@amu.edu.pl}

\author{Cyril Tintarev}

\thanks{The other author (C.T.) expresses his gratitude to the Faculty of Mathematics \& Computer Science of Adam Mickiewicz
University for warm hospitality.}

\address{Sankt Olofsgatan 66B, 75330 Uppsala, Sweden}

\email{tammouz@gmail.com}

\begin{abstract}
\emph{Defect of compactness}, relative to an embedding of two Banach
spaces $E\hookrightarrow F$, is the difference between a
weakly convergent sequence in $E$ and its
weak limit, taken up to a remainder that vanishes in the norm of $F$. For a number of known embeddings, Sobolev embeddings in particular, defect of compactness takes form of a \emph{profile decomposition} - a sum of clearly structured terms with asymptotically disjoint supports, called \emph{elementary concentrations}. 
In this paper we construct a profile decomposition for the Sobolev space $H^{1,2}(M)$ of a Riemannian manifold with bounded geometry, in the form of a sum of elementary concentrations associated with concentration profiles defined on manifolds induced by a limiting procedure at infinity, and thus different from $M$. The profiles satisfy an inequality of Plancherel type: the sum of the quadratic forms of Laplace-Beltrami operators for the profiles on their respective manifolds is bounded by the quadratic form of the Laplace-Beltrami operator of the sequence. A similar relation, related to the Brezis-Lieb Lemma, holds for the $L^p$-norms of profiles on the respective manifolds. 
\end{abstract}

\subjclass[2010]{Primary 46E35, 46B50, 58J99; Secondary 35B44, 35A25}

\keywords{Sobolev embeddings, defect of compactness, profile decomposition, manifolds with bounded geometry}
\maketitle

\section{Introduction}

\emph{Defect of compactness}, relative to an embedding of two Banach
spaces $E\hookrightarrow F$, is a difference $u_{k}-u$ between a
weakly convergent sequence $u_{k}\rightharpoonup u$ in $E$ and its
weak limit, taken up to a suitable remainder that vanishes in the norm of $F$.
In particular, if the embedding is compact and $E$ is reflexive,
the defect of compactness is null. For many embeddings there exist
well-structured representations of the defect of compactness, known
as \emph{profile decompositions}. Best studied are profile decompositions
relative to Sobolev embeddings, which are sums of terms with asymptotically
disjoint supports, called \emph{elementary concentrations}
or \emph{bubbles}. Profile decompositions were originally motivated
by studies of concentration phenomena in PDE in the early 1980's by Uhlenbeck,
Brezis, Coron, Nirenberg, Aubin and Lions, and they play significant
role in verification of convergence of functional sequences in applied
analysis, particularly when the information available via the classical
concentration compactness method is not enough detailed. 

Profile decompositions are known to exist when the embedding $E\hookrightarrow F$ is \emph{cocompact} relative to some group $\mathcal{G}$ of bijective
isometries on $E$. An embedding $E\hookrightarrow F$ is called $\mathcal{G}$-cocompact if any sequence $(u_{k})$ in $E$ satisfying $g_{k}u_{k}\rightharpoonup0$
for any sequence of operators $(g_{k})$ in $\mathcal{G}$ vanishes
in the norm of $F$. (It is easy to verify, for example, that $\ell^{\infty}(\Z)$ is cocompactly embedded into itself relative to the group of shifts $\mathcal{G}=\lbrace(a_{n})\mapsto(a_{n+m})\rbrace_{m\in\Z}$.) The
earliest cocompactness result for functional spaces known to the authors
is the proof of cocompactness of embedding of the inhomogeneous Sobolev
space $H^{1,p}(\R^{N})$, $N>p$, into $L^{q}$, $q\in(p,p^{*})$,
where $p^{*}=\frac{pN}{N-p}$, relative to the group of shifts $u\mapsto u(\cdot-y),\; y\in\R^{N}$,
by E. Lieb \cite{LIeb} (the term \emph{cocompactness} itself appeared
in literature only the last decade). A profile decomposition relative to a group $\mathcal{G}$ of bijective isometries represents defect of compactness as a sum of \emph{elementary
concentrations}, or \emph{bubbles}, $\sum_{n\in\N}g_{k}^{(n)}w^{(n)}$
 with some $g_{k}^{(n)}\in\mathcal{G}$ and $w^{(n)}\in E$, $k\in\N$,
$n\in\N$. The elements $w^{(n)}$, called \emph{concentration profiles},
are then obtained as weak limits of $(g_{k}^{(n)})^{-1}u_{k}$ as
$k\to\infty$. Typical examples of groups $\mathcal{G}$, involved
in profile decompositions, are the above mentioned group of shifts
and the rescaling group, which is a product group of shifts and dilations
$u\mapsto t^{r}u(t\cdot)$, $t>0$, where $r=\frac{N-p}{p}$ for $\dot{H}^{1,p}(\R^{N})$,
$N>p$. 

Existence of profile decompositions for general bounded sequences
in $\dot{H}^{1,p}(\R^{N})$ equipped with the rescaling group was
proved by Solimini \cite{Solimini}, and later, independently, but
with a weaker form of asymptotics, in \cite{Gerard} and \cite{Jaffard}
(\cite{Jaffard} also extended the result to fractional Sobolev spaces).
It was first observed in \cite{SchinTin} that profile decomposition
(and thus concentration phenomena in general) can be understood in
functional-analytic terms, rather than in specific function spaces.
The result of \cite{SchinTin} was extended in \cite{SoliTi} to uniformly
convex Banach spaces with the Opial condition (without the Opial condition
profile decomposition still exists but in terms of the less-known
Delta convergence instead of weak convergence). However, despite the
general character of the statement in \cite{SoliTi}, it does not apply to several known profile decompositions, in particular, when the space $E$
is not reflexive (e.g. \cite{AT_BV}), when one has only a semigroup
of isometries (e.g. \cite{AdiTi-Pisa}), or when the profile decomposition
can be expressed without a group (e.g. Struwe \cite{Struwe}). 

The present paper follows the direction started by the work of Struwe,
to study profile decompositions in the Sobolev space of a non-compact Riemannian manifold that possibly \emph{lacks} a nontrivial isometry group. When the isometry group $\mathrm{Iso}(M)$ of manifold $M$ is sufficiently rich, namely, if 
\begin{equation}
M=\bigcup_{\eta\in\mathrm{Iso}(M)}\eta K\mbox{ for some compact set }K\subset M,\label{eq:coco}
\end{equation}
it is shown in \cite{FiesTin} that Sobolev embedding $H^{1,2}(M)\hookrightarrow L^{p}(M)$,
$2<p<\frac{2N}{N-2}$, $N\ge2$, becomes cocompact relative to the
action of $\mathrm{Iso}(M)$. In this case a profile decomposition is immediate from the functional-analytic statement of \cite{SchinTin}.

In what follows we use the standard invariant norm of $H^{1,2}(M)$, $\|u\|_{1,2}=\left(\int_{M}(|du|^{2}+u^{2})\dv\right)^{1/2}$, where $dv_g$  is the Riemannian measure on $M$,
and we always assume that $N\ge 2$. We quote the result of \cite{FiesTin},
with the property of unconditional convergence added from the general
profile decomposition in $ $\cite{SoliTi}.
\begin{thm}
\label{thm:FT} Let $M$ be a complete Riemannian manifold with a
countable group $G$ of isometries satisfying (\ref{eq:coco}), and
let $(u_{k})$ be a bounded sequence in $H^{1,2}(M)$. Then there
exists $w^{(n)}\in H$, $g_{k}^{(n)}\in G$, $k,n\in\N$, such that
for a renumbered subsequence 
\begin{gather}
g_{k}^{(1)}=id,\;({g_{k}^{(n)}}^{-1}g_{k}^{(m)})_{k}\mbox{ is discrete for }n\neq m,\label{separates}\\
w^{(n)}=\wlim u_{k}\circ g_{k}^{(n)}\label{eq:profile}\\
\sum_{n\in{\N}}\|w^{(n)}\|_{1,2}^{2}\le\limsup\|u_{k}\|_{1,2}^{2}\label{norms}\\
u_{k}-\sum_{n\in{\N}}w^{(n)}\circ{g_{k}^{(n)}}^{-1}\to0\mbox{ in }L^{p}(M),\;2<p<2^{*},\label{BBasymptotics}
\end{gather}
and the series $\sum_{n\in{\N}}w\circ g_{k}^{(n)}$ converges unconditionally
and uniformly with respect to $k$.
\end{thm}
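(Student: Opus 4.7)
The plan is to combine cocompactness of the Sobolev embedding $H^{1,2}(M)\hookrightarrow L^p(M)$ relative to $G$, established in \cite{FiesTin}, with an iterative extraction of profiles in the spirit of Solimini \cite{Solimini} and the abstract framework of \cite{SchinTin,SoliTi}.

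First I would extract profiles one at a time. Set $g_k^{(1)}=\mathrm{id}$ and $w^{(1)}=\wlim u_k$ along a subsequence. Define residuals inductively by $r_k^{(n)}=u_k-\sum_{j=1}^{n} w^{(j)}\circ g_k^{(j)^{-1}}$. If $\|r_k^{(n)}\|_{L^p}\to 0$ the process terminates; otherwise cocompactness produces a sequence $(g_k^{(n+1)})\subset G$ along which, after passing to a subsequence, $r_k^{(n)}\circ g_k^{(n+1)}$ admits a nonzero weak limit $w^{(n+1)}$. Discreteness of $g_k^{(j)^{-1}}g_k^{(n+1)}$ for $j\le n$ is then automatic: if such a sequence converged, then since by the inductive construction $r_k^{(n)}\circ g_k^{(j)}\rightharpoonup 0$, composing with an isometry would force $w^{(n+1)}=0$. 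A diagonal argument gives a single subsequence realizing (\ref{separates}).

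The core step is the energy inequality (\ref{norms}). The key observation is that discreteness of the pairwise gauges makes the bubbles $w^{(n)}\circ g_k^{(n)^{-1}}$ asymptotically orthogonal in the Hilbert space $H^{1,2}(M)$: for each pair $m\neq n$, $(w^{(n)}\circ g_k^{(n)^{-1}})\circ g_k^{(m)}=w^{(n)}\circ(g_k^{(n)^{-1}}g_k^{(m)})$ tends weakly to zero since $g_k^{(n)^{-1}}g_k^{(m)}$ escapes every compact set. Combined with induction on $N$, this yields
\begin{equation*}
\|u_k\|_{1,2}^2=\sum_{n=1}^{N}\|w^{(n)}\|_{1,2}^2+\|r_k^{(N)}\|_{1,2}^2+o(1)\quad\text{as }k\to\infty,
\end{equation*}
and letting $N\to\infty$ then taking the limsup in $k$ yields (\ref{norms}).

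Unconditional and uniform convergence of the series $\sum_n w^{(n)}\circ g_k^{(n)^{-1}}$ in $H^{1,2}(M)$ follows from the same identity applied to tails: for any finite $A\subset\N$ the sum $\|\sum_{n\in A} w^{(n)}\circ g_k^{(n)^{-1}}\|_{1,2}^2$ is bounded by $\sum_{n\in A}\|w^{(n)}\|_{1,2}^2+o(1)$ uniformly in $k$, so the absolute convergence of $\sum\|w^{(n)}\|_{1,2}^2$ transfers to unconditional and uniform convergence of the vector series; the Sobolev embedding transports this to $L^p(M)$. Finally, (\ref{BBasymptotics}) is obtained by contradiction: if $\|r_k^{(N)}\|_{L^p}$ stayed above some $\delta>0$ for arbitrarily large $N$, cocompactness would yield an extra profile of size uniformly bounded below, violating $\sum\|w^{(n)}\|_{1,2}^2<\infty$. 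The main technical obstacle will be the diagonal bookkeeping that preserves weak-limit and discreteness properties simultaneously for all pairs $(m,n)$ along a single subsequence of $(u_k)$.
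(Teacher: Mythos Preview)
Your proposal is correct and follows the direct Hilbert-space argument of \cite{SchinTin,SoliTi} combined with the cocompactness result of \cite{FiesTin}; this is exactly the route by which Theorem~\ref{thm:FT} is established in the cited literature, and in the body of this paper the theorem is simply \emph{quoted} rather than reproved. One small point worth tightening: your iterative step does not stipulate that $w^{(n+1)}$ be chosen with near-maximal $H^{1,2}$-norm among admissible profiles of $r_k^{(n)}$. Without such a choice the final contradiction needs the \emph{quantitative} form of cocompactness (a profile of norm at least $c(\delta)>0$ whenever $\|r_k^{(N)}\|_{L^p}\ge\delta$ and $\|r_k^{(N)}\|_{1,2}\le C$), which does follow from the Lieb-type inequality underlying \cite{FiesTin} (cf.\ the estimate behind Lemma~\ref{lem:spotlight}); you should say so explicitly.

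The paper does, however, give a second and genuinely different derivation, in Section~7, as a corollary of the main Theorem~\ref{thm:main}. That theorem is proved for arbitrary manifolds of bounded geometry with no group action: one constructs ``manifolds at infinity'' $M_\infty^{(n)}$ from $C^\infty$-limits of transition maps $e_{y_{k;i}}^{-1}\circ e_{y_{k;j}}$ along trailing systems, defines global profiles on these limit manifolds, and reassembles elementary concentrations by a partition of unity. Section~7 then verifies that when a cocompact discrete group $G$ acts and one fixes normal coordinates compatibly via $e_{gz}=g\circ e_z$, all manifolds at infinity become isometric to $M$ itself and each elementary concentration $W_k^{(n)}$ collapses to $w^{(n)}\circ(g_k^{(n)})^{-1}$, recovering Theorem~\ref{thm:FT}. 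Your approach is the economical one for the cocompact setting; the paper's route is far heavier but obtains the statement as a byproduct of a result that requires no symmetry whatsoever.
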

In particular, (\ref{eq:coco}) holds, implying the assertion of the
theorem, when $\mathrm{Iso}(M)$ is transitive, i.e. $M$ is homogeneous
space, e.g. if $M$ is $\R^{N}$ or the hyperbolic space
$\mathbb{H}^{N}$. When a non-compact manifold $M$ has no nontrivial
isometries, it does not of course  mean that the Sobolev
embedding $H^{1,2}(M)\hookrightarrow L^{p}(M)$, $2<p<2^{*}$ is compact,
as we demonstrate in the Example \ref{exa:noncompact} below. Thus
the question remains if one can express the corresponding defect of
compactness in a form similar to profile decomposition of similar to \eqref{BBasymptotics}. In this
paper we answer this question positively for manifolds of bounded geometry,
as defined below. Absence of a group of isometries comes, however at some cost, which is transparent already from Struwe's profile decomposition in \cite{Struwe}, where profiles are functions on the tangent space of $M$ at the points of concentration: in general, absence of a non-compact group $\mathcal G$ of isometries that may produce blowup sequences of the form $g_kw\rightharpoonup 0$, $g_k\in\mathcal G$ corresponds to emergence of concentration profiles $w^{(n)}$ supported on metric structures  different from $M$.  This is indeed the case in the present paper that deals with profile decomposition relative to the embedding $H^{1,2}(M)\hookrightarrow L^{p}(M)$ when $M$ is a Riemannian manifold of bounded geometry. 
%Condition of bounded geometry is of course restrictive, but one may
%note that without any restrictions on curvature of
%$M$ there might be  no continuous embedding $H^{1,2}(M)\hookrightarrow L^{p}(M)$ at all except the trivial case $p=2$.

The subject of the paper was proposed to one of the authors a number of years ago by Richard Schoen \cite{Schoen}.

The paper is organized as follows. In Section 2 we give an analog
of the cocompactness property expressed without invoking the isometry group, in terms of the ``spotlight vanishing'' Lemma \ref{lem:spotlight},
which naturally requires the manifold to have bounded geometry. This
lemma motivates our construction of profile decomposition in the main
result of the paper, Theorem \ref{thm:main}, based on patching of
local profiles moving along the manifold. In Section 3 we define the
manifolds at infinity needed to formulate  Theorem
\ref{thm:main}. Manifolds at infinity play the same role in description
of elementary concentrations based on quasi-translations as the tangent
space plays in the descriptions of elementary concentrations based
on dilations in \cite{Struwe}. In
Section 4 we state the main result, as well as provide construction of global profiles as functions on the manifolds at infinity, rather than on the manifold $M$ itself.
Section 5 contains technical statements concerning reconstruction
of the original sequence from its local profiles. Proof of Theorem
\ref{thm:main} is given in the Section 6. In Section 7 we show that
if $M$ satisfies (\ref{eq:coco}), then Theorem
\ref{thm:FT} is a particular case of Theorem \ref{thm:main}.
Appendix contains some elementary properties of manifolds of bounded
geometry, existence of a suitable uniform covering, and a gluing theorem
used in the construction of manifolds at infinity.

\section{A ``spotlight'' lemma and preliminary discussion}

%In what follows $B(x,r)$ will denote a geodesic ball in $M$ and
%$\Omega_{r}$ will denote the ball in $\R^{N}$ of radius $r$ centered
%at the origin. Let $a\in(0,r(M))$ be fixed. let $e_{x}:\Omega_{r}\to B(x,r)$
%be an exponential map of $M$ under identification of the ball of
%radius $r$ centered at the origin in $T_{x}M$ as the ball $\Omega_{r}\subset\R^{N}$
%(we reserve the standard notation $\mathrm{exp}_{x}$ for a standard
%exponential map $T_{x}M\to M$). From now on we assume that $M$ is
%a smooth, complete, connected $N$-dimensional Riemannian manifold
%of bounded geometry. The latter is defined as follows. 
Let $M$  be a smooth, complete $N$-dimensional Riemannian manifold with metric $g$ and a positive injectivity radius $r(M)$. 
In what follows $B(x,r)$ will denote a geodesic ball in $M$ and
$\Omega_{r}$ will denote the ball in $\R^{N}$ of radius $r$ centered at the origin. Let $r\in(0,r(M))$ be fixed. Then the Riemannian exponential map $\mathrm{exp}_{x}$ is a diffeomorphism of $\{v\in T_xM:\, g_x(v,v)< r\}$ onto $B(x,r)$.  For each $x\in M$ we choose an orthonormal basis for $T_xM$ which yields an identification $i_x:\R^N \rightarrow T_xM$.  Then  $e_{x}:\Omega_{r}\to B(x,r)$ will denote  geodesic normal coordinates at $x$ given by $e_{x}=\mathrm{exp}_{x}\circ i_x$. We do not require smoothness of the map $i_x$ with respect to $x$, since in the arguments $x$ will be taken from a discrete subset of $M$.

From now on we assume that $M$ is
a connected non-compact manifold of bounded geometry.  The latter is defined as follows, e.g. cf. \cite{Shubin}.  

\begin{defn}
\label{def:bg} %(Definition A.1.1 from \cite{Shubin})
 A smooth Riemannian
manifold $M$ is of bounded geometry if the following two conditions
are satisfied: 

(i) The injectivity radius $r(M)$ of $M$ is positive. 

(ii) Every covariant derivative of the Riemann curvature tensor $R^{M}$of
M is bounded, i.e., $\nabla^{k}R^{M}\in L^{\infty}(M)$ for every
$k=0,1,\dots$ 
\end{defn}
Please note that a Riemannian  manifold of bounded geometry is always  complete.   On every paracompact manifold $M$ one can define  a Riemannian metric tensor $g$ such that $(M,g)$ is a manifold of bounded geometry, cf. \cite{Green}. We refer the reader to the appendix for elementary properties of manifolds of bounded geometry used in this paper. Here we recall only the notion of the discretization of the manifold that is crucial for our constructions.  

\begin{defn}
\label{def:discr}  A subset $Y$ of  Riemannian manifold $M$ is called  $\varepsilon$-discretization of $M$, $\varepsilon>0$,  if the distance between any two distinct points of $Y$ is greater than or equal to $\varepsilon$ and 
\[ M = \bigcup_{y\in Y} B(y,\varepsilon).\]
\end{defn}
Any connected Riemannian manifold $M$ has a $\varepsilon$-discretizations for any $\varepsilon>0$, and if $M$ is of bounded geometry then for any $t\ge  1$ the covering  $\{ B(y,t\varepsilon)|\}_{y\in Y}$ is uniformly locally finite, cf. Lemma \ref{lem:covering}.   

\begin{example}
\label{exa:noncompact}Let $M$ be a non-compact manifold of bounded geometry,
let $w\in C_{0}^{1}(\Omega_{r})\setminus\{0\}$, let $(x_{k})$ be a discrete sequence
on $M$, and let $u_{k}=w\circ e_{x_{k}}^{-1}$. Then it is easy to
see that $u_{k}\rightharpoonup0$ while $\|u_{k}\|_{p}$ is bounded
away from zero by (\ref{eq:peq}). In other words, for non-compact manifolds of
bounded geometry presence of a \emph{local} concentration profile
$w$ results in a nontrivial defect of compactness.
\end{example}
The main result of the paper, Theorem \ref{thm:main}, is an analog
of Theorem \ref{thm:FT} based on local concentration profiles in
the spirit of Example \ref{exa:noncompact}. Once we subtract from
the sequence all suitably patched local ``runaway bumps'' of the
form $w\circ e_{y_{k}}^{-1}$, the remainder sequence $(v_{k})$ is
expected to have no nonzero local profiles left, in other word, to
satisfy $v_{k}\circ e_{y_{k}}\rightharpoonup0$ in $H^{1,2}(\Omega_{\rho})$
with some $\rho>0$. This is a condition related to the one in the
cocompactness Lemma~2.6 of \cite{FiesTin}, and it implies that
$(v_{k})$ vanishes in $L^{p}(M)$. In strict terms we have the following
``spotlight vanishing'' lemma. 
%As above, $e_{x}:\Omega_{r}\to B(x,r)$,
%with $r<r(M)$ denotes  $\exp_{x}\circ i_{x}$ where $i_{x}$ is
%an arbitrarily fixed linear isometry between $\R^{N}$ and $T_{x}M$. 
In what follows $2^*$ denotes the  Sobolev conjugate of $2$ i.e. $\frac{1}{2^*}=\frac{1}{2}- \frac{1}{N}$.    

%%%%%%%%%%%%%%%%%%%%%%%%%%%%%%%%%%%%%%%%%%%%%%%%%%%%%
\begin{lem}["Spotlight lemma"]
\label{lem:spotlight}Let $M$ be an $N$-dimensional Riemannian manifold of bounded geometry and let $Y\subset M$ be a $r$-discretization of $M$,  $r<r(M)$. 
 Let  $(u_{k})$ be a bounded sequence in $H^{1,2}(M)$.  Then, $u_{k} \to 0$ in $L^{p}(M)$ for any $p\in(2,2^{*})$ if and only if  
 $u_{k}\circ e_{y_{k}}\rightharpoonup 0$ in
$H^{1,2}(\Omega_{r})$  for any  sequence $(y_k)$, $y_{k}\in Y$.  
\end{lem}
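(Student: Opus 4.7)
The plan is to handle the two implications separately. Throughout I would lean on two standard consequences of bounded geometry for $r<r(M)$: first, the metric coefficients and their inverses are uniformly bounded in normal coordinates, so the pullback $u\mapsto u\circ e_y$ is a uniform (in $y\in Y$) isomorphism of $H^{1,2}(B(y,r))$ onto $H^{1,2}(\Omega_r)$ and of $L^p(B(y,r))$ onto $L^p(\Omega_r)$; second, the cover $\{B(y,r)\}_{y\in Y}$ is uniformly locally finite as noted after Definition~\ref{def:discr}. Together these give a uniform Sobolev bound $\|u\|_{L^p(B(y,r))}\le C\|u\|_{H^{1,2}(B(y,r))}$ with $C$ independent of $y$, and a summation estimate $\sum_{y\in Y}\|u\|_{H^{1,2}(B(y,r))}^2\le C'\|u\|_{H^{1,2}(M)}^2$.

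For the forward direction, fix a sequence $(y_k)\subset Y$. The pullbacks $(u_k\circ e_{y_k})$ are bounded in $H^{1,2}(\Omega_r)$ and satisfy $u_k\circ e_{y_k}\to 0$ in $L^p(\Omega_r)$ by the first ingredient. Along any subsequence I would extract a weakly $H^{1,2}$-convergent sub-subsequence by Banach--Alaoglu, upgrade it to a strong $L^p(\Omega_r)$-limit by Rellich--Kondrachov, and conclude the weak limit must equal this strong $L^p$-limit, which is $0$. Independence of the subsequence then gives $u_k\circ e_{y_k}\rightharpoonup 0$.

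For the reverse direction, the crucial intermediate step is to upgrade the hypothesis to the uniform vanishing
\[
\sigma_k := \sup_{y\in Y}\|u_k\circ e_y\|_{L^p(\Omega_r)}\longrightarrow 0.
\]
I would argue by contradiction: if $\sigma_k\not\to 0$, then after passing to a subsequence there exist $\delta>0$ and points $y_k\in Y$ with $\|u_k\circ e_{y_k}\|_{L^p(\Omega_r)}\ge\delta$. Since $(u_k\circ e_{y_k})$ is $H^{1,2}(\Omega_r)$-bounded, Rellich--Kondrachov yields a further subsequence with a strong $L^p(\Omega_r)$-limit $w$ satisfying $\|w\|_{L^p(\Omega_r)}\ge\delta>0$; but the same subsequence also converges weakly in $H^{1,2}(\Omega_r)$, and that weak limit is $0$ by hypothesis applied to this particular sequence $(y_k)$, forcing $w=0$. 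This is the only nonroutine step, and I expect it to be the main obstacle.

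With $\sigma_k\to 0$ in hand, the end is direct. Since $M=\bigcup_{y\in Y}B(y,r)$,
\[
\|u_k\|_{L^p(M)}^p \le \sum_{y\in Y}\|u_k\|_{L^p(B(y,r))}^p = \sum_{y\in Y}\|u_k\|_{L^p(B(y,r))}^{p-2}\,\|u_k\|_{L^p(B(y,r))}^2.
\]
Pulling out $\sup_{z\in Y}\|u_k\|_{L^p(B(z,r))}^{p-2}$, which is comparable to $\sigma_k^{p-2}$ by the pullback isomorphism, and applying the uniform Sobolev estimate to the remaining factor, I obtain
\[
\|u_k\|_{L^p(M)}^p \le C\sigma_k^{p-2}\sum_{y\in Y}\|u_k\|_{H^{1,2}(B(y,r))}^2 \le C''\sigma_k^{p-2}\|u_k\|_{H^{1,2}(M)}^2,
\]
using uniform local finiteness in the last step. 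Boundedness of $(u_k)$ in $H^{1,2}(M)$ and $\sigma_k\to 0$ (with $p>2$) finish the proof.
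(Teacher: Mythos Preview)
Your proof is correct and follows essentially the same approach as the paper: both directions rely on the uniform local Sobolev inequality and Rellich--Kondrachov on $\Omega_r$, and the key implication uses the same splitting $\|u_k\|_{L^p(B(y,r))}^p=\|u_k\|_{L^p(B(y,r))}^{p-2}\|u_k\|_{L^p(B(y,r))}^{2}$ summed over $Y$. The only cosmetic difference is that the paper selects near-maximizers $y_k$ directly rather than arguing $\sigma_k\to 0$ by contradiction, which amounts to the same thing.
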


\begin{proof}
Let us fix $p\in(2,2^{*})$ and assume that $u_{k}\circ e_{y_{k}}\rightharpoonup 0$ in
$H^{1,2}(\Omega_{r})$  for any  sequence $(y_k)$, $y_{k}\in Y$.   The local Sobolev embedding theorem and the boundedness of the geometry of $M$ implies that there  exists
$C>0$ independent of $y\in M$ such that 
%Note that by the definition of $H^{1,2}$
%norm the constant in the embedding $H^{1,2}(B(y,r))\hookrightarrow L^{2}(B(y,r))$
%always equals $1$, so by (ii) and the Hölder inequality there exists
%$C>0$ independent of $y\in Y$ such that 
\[
\int_{B(y,r)}|u_{n}|^{p}\dv\le C\int_{B(y,r)}(|\nabla u_{n}|^{2}+|u_{n}|^{2})\dv\left(\int_{B(y,r)}|u_{n}|^{p}\dv\right)^{1-2/p}.
\]
Adding the terms in  left and  right hand side over $y\in Y$ 
%and taking into account the uniform multiplicity of the covering,
we have 
\begin{equation}
\int_{M}|u_{n}|^{p}\dv\le C\int_{M}(|\nabla u_{n}|^{2}+|u_{n}|^{2})\dv\; \sup_{y\in Y}\left(\int_{B(y,r)}|u_{n}|^{p}\dv\right)^{1-2/p}.\label{eq:intro2}
\end{equation}
Boundedness of the sequence  $(u_{n})$  in $H^{1,2}(M)$ implies that the supremum of the right hand side is finite. So for any $u_{k}$ we can find a sequence  
%Let now 
$y_{k}\in Y$, $k\in\N$, %be 
such that 
\begin{equation}
\sup_{y\in Y}\int_{B(y,r)}|u_{k}|^{p}\dv \le 2^{\frac{p}{p-2}}\int_{B(y_{k},r)}|u_{k}|^{p}\dv.\label{eq:intro1}
\end{equation}
By compactness of the Sobolev embedding $H^{1,2}(\Omega_{r})\hookrightarrow L^{p}(\Omega_{r})$ and weak convergence of the sequence in $H^{1,2}(\Omega_{r})$ we have $u_{k}\circ e_{y_{k}}\to 0$ in  $L^{p}(\Omega_{r})$, and thus,  $\int_{B(y_{k},r)}|u_{k}|^{p}\dv \to 0$. Combining this with (\ref{eq:intro2}) and (\ref{eq:intro1}) we have $u_{k} \to 0$ in $L^{p}(M)$. 

Assume now that $u_{k} \to 0$ in $L^{p}(M)$. Boundedness of the geometry of $M$ implies  for any sequence $(y_k)$ that  $u_{k}\circ e_{y_k} \to 0$ in $L^{p}(\Omega_r)$.  On the other hand  boundedness of the sequence $u_k$ in $H^{1,2}(M)$ and boundedness of geometry give us boundedness of any sequence   $(u_{k}\circ e_{y_k})$  in $H^{1,2}(\Omega_r)$. By continuity of the embedding 
$H^{1,2}(\Omega_{r})\hookrightarrow L^p(\Omega_r)$  we get  $u_{k}\circ e_{y_{k}}\rightharpoonup 0$ in $H^{1,2}(\Omega_{r})$.
\end{proof}
%%%%%%%%%%%%%%%%%%%%%%%%%%%%%%%%%%%
\begin{comment}
Note that (\ref{eq:intro2}) can be also used to prove the well-known
continuous embedding of $H^{1,2}(M)$ into $L^{p}(M)$, when $M$
has bounded geometry. More generally, the continuous embedding exists
if the SECTIONAL? curvature of $M$ is bounded from below (see REF
Hebey?).
\end{comment}

The main result of this paper, Theorem \ref{thm:main}, requires a
definition of a manifold at infinity of $M$ 
associated with a given discrete sequence $(y_{k})$ in $M$, as well as  a proof that such manifold exists. These are given in Section 3. Thus we dedicate
the rest of introduction to discussing the place of our settings (subcritical
Sobolev embedding, manifold of bounded geometry) in the context of
existing or possible results concerning profile decompositions in
Sobolev spaces of Riemannian manifolds. 

Struwe \cite{Struwe} (see also the exposition in the book \cite{DHR})
gives a profile decomposition for the limiting case $p=2^{*}$ of
the Sobolev embedding on a compact manifold. By means of a finite
partition of unity and the exponential map this profile decomposition
follows from the profile decomposition for the limiting Sobolev embedding
for the case of a bounded domain in $\R^{N}$. This, in turn, is a
consequence of the profile decomposition for the embedding $\dot{H}^{1,2}(\R^{N})\hookrightarrow L^{2^{*}}(\R^{N})$
based on the rescaling group which is a product group of shifts $u\mapsto u(\cdot-y)$,
$y\in\R^{N}$, and dilations $u\mapsto t^{\frac{N-2}{2}}u(t\cdot)$,
$t>0$. However, for sequences supported in a bounded domain of $\R^{N}$
profile decomposition cannot contain shifts to infinity or deflations
$u\mapsto t_{k}^{\frac{N-2}{2}}u(t_{k}\cdot)$, $t_{k}\to0$, or superpositions
thereof, so it consists only of blowup terms $u\mapsto t_{k}^{\frac{N-2}{2}}u(t_{k}\cdot)$,
$t_{k}\to\infty$, with bounded (or, equivalently, modulo vanishing remainder,
constant) shifts. 

By analogy with the case $M=\R^{N}$, one could expect that generalizing
Struwe's profile decomposition to a non-compact manifold would mean
finding a way to express loss of compactness with respect to shifts
along the manifold in combination with changes of scale responsible
for loss of compactness in the limiting case $p=2^{*}$. While one
can easily define a blowup of a local profile traveling along points
$y_{k}\in M$ as $x\mapsto t_{k}^{\frac{N-2}{2}}w(t_{k}e_{y_{k}}^{-1}(x))$
by $t_{k}^{-\frac{N-2}{2}}u_{k}(e_{y_{k}}(t_{k}^{-1}\cdot))\rightharpoonup w$
in $H^{1,2}(\Omega_{r})$ with $t_{k}\to\infty$, this construction
does not extend to the opposite end of scale, i.e. $t_{k}\to0$ and
has no simple counterpart in the non-Euclidean case:
a putative deflating transformation must be substantially dependent on the geometry of the manifold at every point.

In this paper we provide a profile decomposition only for subcritical
Sobolev embeddings, which in the Euclidean case involve only the group
of shifts. We use the exponential map to define a \emph{local} counterpart
of translations ``along'' a sequence of points $y_{k}\in M$, namely,
a ``spotlight'' sequence $u_{k}\circ e_{y_{k}}:\Omega_{r}\to B(y_{k},r)$.
%Like in \cite{Struwe}, where concentration profiles are functions
%on different tangent spaces of $M$, that is, up to an isometry, functions
%on $\R^{N}$ instead of the original manifold $M$, concentration profiles in this paper
% are functions on the manifolds-at-infinity
%of $M$ defined in Section 3. Also, l
Like in \cite{Struwe}, reconstruction of the original sequence from its concentration profiles
involves patching the (local) profiles, composed with the inversed
exponential map, by a partition of unity on $M$. 

Without the assumption of bounded geometry, bounded sequences in
$H^{1,2}(M)$ do not admit, in general, a profile decomposition for
the mere reason that there might be no embedding $H^{1,2}(M)\hookrightarrow L^{p}(M)$ except for the trivial case $p=2$. Even if the embedding exists, but the geometry is not bounded, local translations along the manifold may
induce complicated - nonlinear and anisotropic - changes of scale,
which are likely to affect the expression for the defect of compactness.
The critical case $p=2^{*}$ of the problem has to cope not only with
this difficulty, as well as with the already mentioned
issue of additional loss of compactness due a putative non-Euclidean analog of deflations (the opposite end of scale to blowups) in the Euclidean space.
% %the manifold is compact assures, as it is for the bounded Euclidean
%domains, that all loss of compactness is due to local concentrations,
%and that all manifolds at infinity are naturally defined as tangent
%spaces at the points of concentration.

\section{Manifolds at infinity}

In what follows we consider the radius $\rho<\frac{r(M)}{8}$ and $\hat{\rho}$-discretization  $Y$ of $M$, $\frac{\rho}{2}<\hat{\rho}<\rho$. In what follows we will use
the notation $\N_{0}\eqdef\N\cup\lbrace0\rbrace$.
\begin{defn}
\label{def:trailing} Let $(y_{k})_{k\in\N}$ be a sequence in $Y$ that is an enumeration of the infinite subset of $Y$.
A countable family $\lbrace(y_{k;i})_{k\in\N}\rbrace_{i\in\N_{0}}$
of sequences on $Y$ is called  \emph{a  trailing system} for $(y_{k})_{k\in\N}$ if for every $k\in\N$ $(y_{k;i})_{i\in\N_{0}}$
is an ordering of $Y$ by the distance from $y_{k}$, that is, an
enumeration of $Y$ such that $d(y_{k;i},y_k)\le d(y_{k;i+1},y_k)$ for
all $i\in\N_{0}$. In particular, $y_{k;0}=y_k$. 
\end{defn}

Note that any enumeration of the infinite subset of  $Y$ admits a trailing system: it can be constructed inductively, by starting with $y_{k;0}=y_{k}$ and, given $i\in\N_{0}$, choosing $y_{k;i+1}$ as any point $y\in Y\setminus\lbrace y_{k;0},\dots,y_{k;i}\rbrace$ with the least value of $d(y,y_{k})$, $i\in\N_{0}$. The trailing system is generally not uniquely defined when for some $k\in\N$ there are several points of $Y$ with the same distance from $y_{k}$.

\begin{lem}\label{Ji}
Let $(y_{k})_{k\in\N}$ be a sequence in a discretization $Y$ that is an enumeration of the infinite subset of $Y$.
There exists a renamed subsequence of $(y_k)_{k\in\N}$ with the following property: for any $i\in \N_0$ there exist a finite subset $J_i$ of $\N_0$ such that  
\begin{equation}\label{Ji0}
B(y_{k;i},\rho)\cap B(y_{k;j},\rho)\not=\emptyset \; \Longleftrightarrow\;  j\in J_i \, . 
\end{equation}
% For any $i\in \N_0$ there exist $k_i\in \N_0$ such that if $k\ge k_i$ and $B(y_{k;i},\frac{4}{5}\rho)\cap B(y_{k;j},\frac{4}{5}\rho)\not=\emptyset$ then
%$(i,j)\in \mathcal{K}$.
\end{lem}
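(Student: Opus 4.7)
The plan is to reduce the claim to a combinatorial problem about finite sets and then apply a diagonal pigeonhole. Set $\mathcal{N}_k(i):=\{j\in\N_0 : B(y_{k;i},\rho)\cap B(y_{k;j},\rho)\neq\emptyset\}$; since $2\rho<r(M)/4$, two such geodesic balls intersect iff $d(y_{k;i},y_{k;j})<2\rho$, so the goal becomes extracting a subsequence on which $\mathcal{N}_k(i)$ is a $k$-independent set $J_i$ for every $i\in\N_0$.

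First I would invoke Lemma \ref{lem:covering} to produce a uniform constant $N_0=N_0(\rho)$ such that $|Y\cap B(x,2\rho)|\le N_0$ for every $x\in M$, so that $|\mathcal{N}_k(i)|\le N_0$ uniformly in $k$ and $i$. Next I would control the distance $r_{k;i}:=d(y_{k;i},y_k)$: because $Y$ is $\hat\rho$-dense and $M$ has bounded geometry, volume comparison combined with the lower injectivity-radius bound yields a uniform lower bound on $|Y\cap B(y_k,R)|$ that diverges with $R$, so $r_{k;i}\le R_i$ for some $R_i$ depending only on $i$. Combining this with the trailing order: if $j\in\mathcal{N}_k(i)$ then $d(y_{k;j},y_k)\le R_i+2\rho$, hence $j\le |Y\cap\overline{B(y_k,R_i+2\rho)}|-1\le M_i$ for a $k$-uniform constant $M_i$. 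This places $\mathcal{N}_k(i)$ in the finite family of subsets of $\{0,\ldots,M_i\}$.

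Once the index range of $\mathcal{N}_k(i)$ is confined to a finite set for each $i$, the combinatorics is routine. I would perform a nested pigeonhole: starting from $S_0=\N$, at stage $i$ select an infinite $S_i\subset S_{i-1}$ on which $\mathcal{N}_k(i)$ is constant, calling the common value $J_i$. A standard diagonal subsequence $(k_m)$, defined so that $k_m\in S_m$, then satisfies $\mathcal{N}_{k_m}(i)=J_i$ for every $m\ge i$; after renaming this is precisely the equivalence (\ref{Ji0}). The finiteness $|J_i|\le N_0$ follows immediately from the uniform bound in the previous step.

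The main obstacle is establishing the $k$-uniform index bound $M_i$; without it the pigeonhole collapses, because a priori $\mathcal{N}_k(i)$ ranges over the infinite collection of size-$\le N_0$ subsets of $\N_0$ and no single value need recur. This is precisely where bounded geometry is essential, through uniform volume comparison together with the dual separation/covering property of the $\hat\rho$-discretization.
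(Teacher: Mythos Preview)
Your proof is correct and follows the same approach as the paper: confine the index set $\mathcal{N}_k(i)$ to a $k$-independent finite interval $\{0,\ldots,M_i\}$ via volume packing and Bishop--Gromov comparison, then apply pigeonhole for each $i$ and diagonalize. If anything, your treatment is slightly more explicit, since you isolate the intermediate bound $d(y_{k;i},y_k)\le R_i$ (from the covering property of $Y$ and a uniform volume lower bound) that the paper leaves implicit when it asserts that the constant $C_i$ in \eqref{Ji1} is independent of $k$.
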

\begin{proof}
Let us fix $i$. If the ball $B(y_{k;j},\rho)$ intersects $B(y_{k;i},\rho)$ then $B(y_{k;\ell },\rho/2)\subset B(y_{k}, d(y_k,y_{k;i})+3\rho)$ for any $\ell\in \{0,1,\ldots , j\}$. The geometry of $M$ is bounded so the respective volumes of the balls $(B(y_{k;\ell },\rho/4)$ are bounded from bellow by a constant depending on $\rho$ but independent of the  balls. Note that these balls are pairwise disjoint. Moreover the Ricci curvature of $M$ is bounded from below, so  by the Bishop-Gromov volume comparison  theorem  the volume of any ball $B(y_{k;\ell },r)$ can be estimated from above by the constant depending only on the radius. In consequence  
\begin{equation}\label{Ji1} 
 C\, j\; \le \sum_{\ell=0}^j  \vol(B(y_{k;\ell },\rho/4) \le  \vol \Big(B\big(y_{k}, d(y_k,y_{k;i})+3\rho\big)\Big) \le C_i,  
 \end{equation}
and the constant $C_i$ is  independent of $k$.  
Let $J_{k;i}= \{j:  B(y_{k;i},\rho)\cap B(y_{k;j},\rho)\not=\emptyset  \}$. Then for any $k$ we have $J_{k;i}\subset [0, C_i/C]$. Therefore there exists a subsequence  $k_1,k_2,\cdots$ such that $J_{k_\ell,i}=J_{k_\nu,i}$ for any $\ell$ and $\nu$. We put $J_{i}=J_{k_1,i}$.   
 
The assertion of the lemma follows now from the standard diagonalization argument.  
%Let us assume that  $(i,j)\notin \mathcal{K}$. Then there exists $\nu\in \N$ such that $\inf_{k\ge \nu} d(y_{k;i},y_{k;j})\ge 2\rho$.  
\end{proof}

We will always assume throughout the paper that the sequence we work with satisfies the above property. This can be done since passing to subsequence never spoils our construction.

With a given trailing system $\lbrace(y_{k;i})_{k\in\N}\rbrace_{i\in\N_{0}}$ we associate a manifold $M_\infty^{(y_{k;i})}$ defined by gluing data that will be constructed below. In the construction we will use definitions from the second part of  Appendix.

When we define the manifold $M_\infty^{(y_{k;i})}$ we assume that we work with a sequence satisfying \eqref{Ji0}.   The following subset of $\N^2_0$ is essential for the construction:
\[ \mathcal{K} = \bigcup_{i=0}^\infty \{(i,j):\; j\in J_i \} . \]
%Let $\mathcal{K}$ be a set of pairs $(i,j)\in\N_{0}^{2}$  such that  $\liminf_{k\to\infty}d(y_{k;i},y_{k;j}) < 2\rho$. 
If $(i,j)\in \mathcal{K}$, then passing to a subsequence  for any  $\xi,\eta\in\Omega_{2\rho}$ we have
%Then passing to a subsequence  for any  $\xi,\eta\in\Omega_{2\rho}$ we have
\[
d(e_{y_{k;j}}\xi,e_{y_{k;i}}\eta)\le d(e_{y_{k;j}}\xi,y_{k;j})+d(y_{k;j},y_{k;i})+d(y_{k;i},e_{y_{k;i}}\eta)< 6\rho<\frac{3r(M)}{4}\, .
\]
Therefore, on a subsequence, we may consider a diffeomorphism 
\[
\psi_{ij,k}\eqdef e_{y_{k;i}}^{-1}\circ e_{y_{k;j}}:\;\bar{\Omega}_{2\rho}\to\Omega_{a} ,\qquad a=\frac 3 4 r(M) . %k\mbox{ large enough}.
\]

To each pair $(i,j)\in\mathcal{K}$ we associate a subset $\Omega_{ji}$ of $\Omega_{2\rho}$ and  a diffeomorphism  $\psi_{ij}$ defined on $\Omega_{ji}$ whenever the latter is nonempty. 
\begin{comment}
\[
\psi_{ij,k}\eqdef e_{y_{k;i}}^{-1}\circ e_{y_{k;j}}:\Omega_{ji,k}\to\Omega_{ij,k},\, k\in N,where\Omega_{ji,k}=\psi_{ij,k}\Omega_{a}\cap\Omega_{a},i,j\in\N_{0}.
\]
\end{comment}
By boundedness of the geometry, cf. Lemma \ref{lem:bdd-derivatives}, and the Ascoli-Arzela theorem,
there is a renamed subsequence of $(\psi_{ij,k})_{k\in\N}$ that converges
in $C^{\infty}(\bar{\Omega}_{2\rho})$ to some smooth function $\psi_{ij}:\bar{\Omega}_{2\rho}\to\Omega_{a}$,
and, moreover, we may assume that the same extraction of $(\psi_{ji,k})_{k\in\N}$
converges in $C^{\infty}(\bar{\Omega}_{2\rho})$ as well. Note that
Lemma \ref{lem:bdd-derivatives} gives that for any $\alpha\in\N_{0}^{N}$
there exists a constant $C_{\alpha}>0$, such that 
\[
| D^{\alpha}\psi_{ij}(\xi)|\le C_{\alpha}\mbox{ whenever }i,j\in\N_{0},\;\xi\in\Omega_{\rho}.
\]
We define $\Omega_{ij}\eqdef\psi_{ij}(\Omega_{\rho})\cap\Omega_{\rho}$.
This set may generally be empty. Let us define a set that we will invoke in our application of Corollary \ref{cor:gluing} that will follow: 
\begin{equation}
\mathbb{K}\eqdef \{(i,j)\in \mathcal K:\;\Omega_{ij}\neq\emptyset\}.
\end{equation}
 %The definitions of our discretization and of the set $\mathcal{K}$ imply that $\Omega_{ij}$ is not an empty set. So the set $\mathcal{K}$ coincides with the set $\mathbb{K}$ from  Corollary \ref{cor:gluing}.  
To prove the cocycle condition for the gluing data we should extract subsequences in  a more restrictive way. First we consider a
subsequence $\psi_{01,k}^{1}$ of $\psi_{01,k}$ that converges to
$\psi_{01}$ and note that on the same subsequence we have convergence
of $\psi_{10,k}^{1}$ to $\psi_{10}$. Fix an enumeration $n\mapsto(i_{n},j_{n})$
of the set of all indices $(i,j)\in\mathbb{K}$, %$i,j\in\N_{0}$, 
$i<j$, and extract the
convergent subsequence $\psi_{i_{\ell}j_{\ell},k}^{n+1}$ of the subsequence
$\psi_{i_{\ell}j_{\ell},k}^{n}$ from the previous extraction step,
for $\ell=0,\dots,n+1$. Then the diagonal sequence $\psi_{i_{\ell}j_{\ell},k}^{k}$
will converge to $\psi_{i_{\ell}j_{\ell}}$ for any $\ell\in\N$.

By the definition of $\Omega_{ij}$ and $\psi_{ij}$ we have  $\psi_{ij}\circ\psi_{ji}=\mathrm{id}$ on $\Omega_{ij}$ and $\psi_{ji}\circ\psi_{ij}=\mathrm{id}$ on $\Omega_{ji}$. Therefore $\psi_{ji}=\psi_{ij}^{-1}$  in restriction to $\Omega_{ij}$, and $\psi_{ji}$ is a diffeomorphism between $\Omega_{ij}$ and $\Omega_{ji}$. Note that this construction gives that $\psi_{ii}=\mathrm{id}$ , $\Omega_{ii}=\Omega_{\rho}$ for all $i\in\N_{0}$.  Thus conditions \emph{(i-iii)} of Corollary \ref{cor:gluing} are satisfied. 

Note also  that the second step of the constructions implies  
\begin{align*}
%\[
\psi_{\ell i} = &\lim_{k\to\infty}e_{y_{k;\ell}}^{-1}\circ e_{y_{k;i}} = 
\lim_{k\to\infty}e_{y_{k;\ell}}^{-1}\circ e_{y_{k;j}}\circ e_{y_{k;j}}^{-1}\circ e_{y_{k;i}}= \\
& \lim_{k\to\infty}e_{y_{k;\ell}}^{-1}\circ e_{y_{k;j}}\circ\lim_{k\to\infty}e_{y_{k;j}}^{-1}\circ e_{y_{k;i}}=\psi_{\ell j}\circ\psi_{ji},
%\]
\end{align*}
and
\[
\psi_{ij}(\Omega_{ji}\cap\Omega_{jk})=\psi_{ij}(\psi_{ji}(\Omega_{\rho})\cap\Omega_{\text{\ensuremath{\rho}}}\cap\psi_{jk}(\Omega_{\rho})\cap\Omega_{\text{\ensuremath{\rho}}})=\Omega_{ij}\cap\Omega_{ik},
\]
which proves condition \emph{(iv)} of Corollary \ref{cor:gluing}. 

Let $x\in \partial \Omega_{ij}\cap \Omega_{\rho}$. Since $\partial \Omega_{ij}\subset \partial \psi_{ij}(\Omega_{\rho})\cup \partial\Omega_{\rho}$ and $\Omega_{\rho}$ is open we conclude that $x\in \partial \psi_{ij}(\Omega_{\rho})=  \psi_{ij}( \partial \Omega_{\rho})$. Thus $\psi_{ji}(x)\in \partial \Omega_{\rho}$. This proves the condition \emph{(v)} of Corollary \ref{cor:gluing}.  

%Now from Corollary \ref{cor:gluing} we  conclude that there exists a smooth differential manifold $M_{\infty}^{(y_{k;i})}$ with an atlas 
%$\{(U_i,\tau_i)\}_{i\in\N_{0}}$, such that $\tau_i(U_i)=\Omega_\rho$ and  
%%$\lbrace\Omega_{\rho},\varphi_{i}\rbrace_{i\in\N_{0}}$
%transition maps are $\tau_{j}\circ\tau^{-1}_{i}=\psi_{ji}:\Omega_{ij}\to\Omega_{ji}$.  

We have thus proved the following proposition, cf.  Corollary \ref{cor:gluing}.
%%%%%%%%%%%%%%%%%%%%%%%%%%%%%%%%%%%%%%%%%%%%%%%%%%%%%%%%%%%%%
%%%%%%%%%%%%%%%%%%%%%%%%%%%%%%%%%%%%%%%%%%%%%%%%%%%%%%%%%
\begin{prop}
Let $M$ be a Riemannian manifold with bounded geometry and let $Y$ be its discretization. 

For any trailing system $\lbrace(y_{k;i})_{k\in\N}\rbrace_{i\in\N_{0}}$ related to the sequence $(y_k)$  in $Y$ there exists a smooth manifold $M_{\infty}^{(y_{k;i})}$  with an atlas $\{(U_i,\tau_i)\}_{i\in\N_{0}}$ such that:\\
1)   $\tau_i(U_i)=\Omega_\rho$, \\
and\\
2)  there exists a renamed subsequence of $k$ such that for any two charts  $(U_i,\tau_i)$ and $(U_i,\tau_i)$ with $U_i\cap U_j\not=\emptyset$  the corresponding  transition map $\psi_{ij}: \tau_j(U_j\cap U_i)\rightarrow \tau_i(U_j\cap U_i)$ is given by the $C^\infty$-limit
\[
\psi_{ij}=\lim_{k\rightarrow \infty} e_{y_{k;i}}^{-1}\circ e_{y_{k;j}}. 
\]
\end{prop}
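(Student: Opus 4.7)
My plan is to assemble the pieces developed in the preceding pages and invoke the gluing result, Corollary~\ref{cor:gluing}, from the Appendix. The work is essentially organizational: all the analytic content is concentrated in a single compactness-plus-extraction argument, while the algebraic compatibility conditions have already been sketched inline above.

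First, I would fix a sequence $(y_k)$ in $Y$ and a trailing system satisfying the conclusion of Lemma~\ref{Ji}, so that the index set $\mathbb{K}=\{(i,j)\in\N_0^2:\Omega_{ij}\neq\emptyset\}$ is countable and, for each fixed $i$, only finitely many $j$'s appear. Second, for each pair $(i,j)\in\mathbb{K}$ I would study the sequence of diffeomorphisms $\psi_{ij,k}=e_{y_{k;i}}^{-1}\circ e_{y_{k;j}}$ from $\bar\Omega_{2\rho}$ into $\Omega_a$. Lemma~\ref{lem:bdd-derivatives}, a manifestation of bounded geometry, supplies uniform bounds on all derivatives of $\psi_{ij,k}$, independently of $i$, $j$, and $k$. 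Ascoli--Arzela in each $C^m$ norm together with a diagonal extraction over $m$ then produces a $C^\infty$-convergent subsequence for any fixed pair. To obtain a single subsequence that works simultaneously for every $(i,j)\in\mathbb{K}$---which is crucial for the algebraic identities below---I would enumerate $\mathbb{K}$ and run a further diagonal extraction, keeping the notation $(\psi_{ij,k})$ for the final subsequence and writing $\psi_{ij}=\lim_k \psi_{ij,k}$ in $C^\infty(\bar\Omega_{2\rho})$.

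Third, I would verify conditions $(i)$--$(v)$ of Corollary~\ref{cor:gluing} for the data $\{\Omega_\rho,\Omega_{ij},\psi_{ij}\}$, with $\Omega_{ij}=\psi_{ij}(\Omega_\rho)\cap\Omega_\rho$. The identity $\psi_{ii}=\mathrm{id}$ is immediate; $\psi_{ji}=\psi_{ij}^{-1}$ on $\Omega_{ij}$ follows by passing to the limit in $\psi_{ji,k}\circ\psi_{ij,k}=\mathrm{id}$; and the cocycle identity $\psi_{\ell i}=\psi_{\ell j}\circ\psi_{ji}$ likewise follows by passing to the limit in the corresponding identity valid for every finite $k$. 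The boundary condition $(v)$ is a short topological check based on the inclusion $\partial\Omega_{ij}\cap\Omega_\rho\subset\psi_{ij}(\partial\Omega_\rho)$. Once conditions $(i)$--$(v)$ are in hand, Corollary~\ref{cor:gluing} delivers the manifold $M_\infty^{(y_{k;i})}$ together with an atlas $\{(U_i,\tau_i)\}_{i\in\N_0}$ such that $\tau_i(U_i)=\Omega_\rho$ and transition maps $\tau_i\circ\tau_j^{-1}=\psi_{ij}$, exactly as claimed.

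The principal obstacle I anticipate is the bookkeeping of the diagonal extraction: one needs a single subsequence of $k$ that serves every pair in the countably infinite set $\mathbb{K}$ and, simultaneously, supports the passage to the limit in every cocycle identity $\psi_{\ell i,k}=\psi_{\ell j,k}\circ\psi_{ji,k}$ invoked in step three. The finiteness of each slice $\{j:(i,j)\in\mathbb{K}\}$ supplied by Lemma~\ref{Ji} is what makes this extraction manageable. Once it is set up correctly, the remaining verifications reduce to inspecting limits of identities that hold exactly for every finite $k$.
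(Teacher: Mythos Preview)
Your proposal is correct and follows essentially the same route as the paper: pass to a subsequence via Lemma~\ref{Ji}, use the uniform derivative bounds from bounded geometry (Lemma~\ref{lem:bdd-derivatives}) together with Ascoli--Arzel\`a and a diagonal extraction to produce the $C^\infty$-limits $\psi_{ij}$, then verify conditions \emph{(i)}--\emph{(v)} of Corollary~\ref{cor:gluing} by passing to the limit in identities that hold for each $k$. One small point of order: the set $\mathbb{K}=\{(i,j):\Omega_{ij}\neq\emptyset\}$ is defined via the limit maps $\psi_{ij}$ and hence only after extraction, so the extraction should be carried out over the larger countable set $\mathcal{K}=\bigcup_i\{(i,j):j\in J_i\}$ supplied directly by Lemma~\ref{Ji}; the paper makes this distinction explicit.
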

%%%%%%%%%%%%%%%%%%%%%%%%%%%%%%%%%%%%%%%%%%%%%%%%%%%%%%%%%%

For convenience we will also widely use the "inverse" charts $\varphi_i= \tau_i^{-1}$ so that $\varphi_{j}^{-1}\circ\varphi_{i}=\psi_{ji}:\Omega_{ij}\to\Omega_{ji}$.

Now define the Riemannian metric on $M_{\infty}^{(y_{k;i})}$ in two steps as follows. First for any $i\in \N_0$ we define a metric tensor $g^{(i)}$ on $\Omega_\rho$ and afterwards we pull it back onto $U_i=\varphi_i(\Omega_\rho)\subset M_{\infty}^{(y_{k;i})}$ via $\varphi_i^{-1}$ and prove the compatibility conditions.   
 
Tensor $g^{(i)}$ is defined as a $C^\infty$-limit on a suitable renamed subsequence:
\begin{equation}
\widetilde{g}_{\xi}^{(i)}(v,w)\eqdef\lim_{k\to\infty} g_{e_{y_{k;i}(\xi)}}   \big(d e_{y_{k;i}}(v), d e_{y_{k;i}}(w)\big) , \; \xi\in \Omega_\rho \; \text{and}\; v,w\in \R^N,\label{eq:pre-metric}
%\mbox{ on }\Omega_{\rho},\, i\in\N_{0}
\end{equation}
Existence of the limit follows from the boundedness of the geometry of the manifold $M$ since the coefficients of the tensors $g_{e_{y_{k;i}}}$ form a bounded family of functions in the spaces $C^\infty(\Omega_\rho)$.
% So the limit in    \eqref{eq:pre-metric}  can be understood in the sense of the convergence in this space. Moreover 
Using the standard diagonalization procedure  we can choose the same  subsequence for any $i$.  Furthermore, $\widetilde{g}^{(i)}$  is a bilinear symmetric positive-definite form. Since we used in the definition \eqref{eq:pre-metric}  normal coordinates, we have $\widetilde{g}_{0}^{(i)}(v,v)=|v|^2$. In consequence, 
by the boundedness  of geometry, $\widetilde{g}^{(i)}_{\xi}[v,v]\ge\frac{1}{2}|v|^2$ in $\Omega_{\rho}$ for all $i\in\N_0$, provided that $\rho$ is fixed sufficiently small.

Now we can define   a metric $\widetilde{g}$ on $M_\infty^{(y_{k;i})}$ by the following relation 
\begin{align} \label{eq:metric}
\widetilde{g}_{x}(v,w)  &\eqdef \widetilde{g}^{(i)}_{\varphi_i^{-1}(x)}\big(d \varphi_i^{-1}(v), d \varphi_i^{-1}(w)\big) , \\ 
\qquad &  x\in \varphi_{i}(\Omega_\rho) \subset M_{\infty}^{(y_{k;i})}  \; \text{and}\; v,w\in T_x M_{\infty}^{(y_{k;i})} . \nonumber 
\end{align}
To prove that the Riemannian metric is well defined we should  verify the compatibility relation on overlapping charts, i.e. that 
\begin{align} \label{eq:metric_1.2}
\widetilde{g}^{(i)}_{\varphi_i^{-1}(x)}(d \varphi_i^{-1} v,  & d \varphi_i^{-1} w)  =  \widetilde{g}^{(j)}_{\varphi_j^{-1}(x)}(d \varphi_j^{-1} v, d \varphi_i^{-1} w) , \\ 
\qquad &\text{if}\qquad   x\in \varphi_{i}(\Omega_\rho)\cap \varphi_{j}(\Omega_\rho)  \; \text{and}\; v,w\in T_x M_{\infty}^{(y_{k;i})} . \nonumber 
\end{align}
But $\varphi^{-1}_j\circ \varphi_i =\psi_{ji}$, so it suffices to prove that
\begin{align} \label{eq:metric_1.2a}
\widetilde{g}^{(i)}_{\xi}( v,  w)  =  \widetilde{g}^{(j)}_{\psi_{ji}(\xi)}(d \psi_{ji} v, d \psi_{ji} w) , \qquad  
\text{with}\qquad   v,w\in T_\xi \Omega_\rho.  
\end{align}
Let $e_{y_{k;j}}^{-1}\circ e_{y_{k;i}} (\xi) = \eta_k  $ then $\psi_{j,i}(\xi)= \lim_{k\rightarrow \infty} \eta_k$  and $e_{y_{k;i}}(\xi) = e_{y_{k;j}}(\eta_k)$. In consequence
\begin{align} \label{eq:metric_1.2b}
\widetilde{g}^{(i)}_{\xi}( v,  w) & =  \lim_{k\rightarrow \infty} g_{e_{y_{k;i}}(\xi)}(d e_{y_{k;i}}  v, d e_{y_{k;i}} w)  = \\ 
\qquad & = \lim_{k\rightarrow \infty} g_{e_{y_{k;j}}(\eta_k)}(d e_{y_{k;j}^{-1}}e_{y_{k;i}}  v, d e_{y_{k;j}^{-1}}\circ e_{y_{k;i}} w)  = \nonumber \\ 
\qquad & =  g_{\psi_{j,i}(\xi)}(d \psi_{ji} v, d \psi_{ji}  w)   \nonumber
\end{align}

%%%%%%%%%%%%%%%%%%%%%%%%%%%%%%%%%%%%
%%%%%%%%%%%%%%%%%%%%%%%%%%%%
\begin{defn}
A manifold at infinity $M_{\infty}^{(y_{k;i})}$ of a manifold $M$
with bounded geometry, generated by a trailing system $\lbrace(y_{k;i})_{k\in\N}\rbrace_{i\in\N_{0}}$ of a sequence $(y_k)$ in $Y$,
is the differentiable manifold given by Theorem \ref{cor:gluing}, supplied
with a Riemannian metric tensor $\widetilde{g}$ defined  by (\ref{eq:metric}).   
\end{defn}

For the given  chart $(\Omega_\rho, \tau_i)$ components of the metric tensor $\widetilde{g}$ are defined  by  formula \eqref{eq:pre-metric}, cf. \eqref{eq:metric}. Let $\xi=0$. The maps $e_{y_{k;i}}$ are 
 normal coordinates systems, so for any $k$  components $g_{\ell,m}$ of the metric tensor $g$ satisfy  $g_{\ell,m}(0)=\delta_{\ell, m}$ and $\partial_ng_{\ell,m}(0)=0$. So by identity \eqref{eq:pre-metric} we get 
\[
\widetilde{g}_{\ell,m}(0)=\delta_{\ell, m} \qquad \text{and}\qquad \partial_n \widetilde{g}_{\ell,m}(0)= 0 \, .
\]
Moreover the components $g_{\ell,m}$ are a bounded set  in $C^{\infty}(\Omega_\rho)$ so all the set  of $\widetilde{g}_{\ell,m}$ is also  bounded in $C^{\infty}(\Omega_\rho)$.

For any $k$ and  $i$,  $(\Omega_\rho, e_{y_{k;i}})$ is a normal coordinate system, so for any unit vector v we have on that ball $\Gamma_{m,\ell}^n(tv)v_\ell v_m=0$, $0\le t\le\rho$, where $\Gamma_{m,\ell}^n$ denotes  Christoffel symbols of a given Riemannian metric on $M$.  But Christoffel symbols can be expressed in terms of components of Riemannian metric tensor and their derivatives, so the  Christoffel symbols $\widetilde{\Gamma}^n_{m,\ell}$ of the manifold $M_{\infty}^{(y_{k;i})}$  are limit values in $C^{\infty}$ of the   Christoffel symbols $\Gamma^n_{m,\ell}$  of the manifold M. Therefore $t\mapsto t v$, $0\le t\le\rho$, are geodesic curves also for $M_{\infty}^{(y_{k;i})}$ in the coordinates $(\Omega_\rho,\varphi_i)$.
%so for any unit vector $v\in \R^N$  $c(t)=tv$, $t\in (-\rho,\rho)$, define a geodesic through $e_{y_{k;i}}(0)$ i.e. in the local coordinates it satisfies %the differential system
%\[
%\frac{d^2 c_n(t)}{dt^2} + \sum_{m,\ell}\Gamma^n_{m,\ell}(c(t))\frac{d c_\ell(t)}{dt}\frac{d c_m(t)}{dt}=0 , \qquad 1\le n\le N .
%\]   
%But the Christoffel symbols can be expressed by the components of Riemannian metric tensor and their derivatives so the  Christoffel symbols $%\widetilde{\Gamma}^n_{m,\ell}$ of the manifold $M_{\infty}^{(y_{k;i})}$  are the limit in $C^{\infty}$ of the   Christoffel symbols $\Gamma^n_{m,\ell}$  %of the manifold $M$. In consequence $c(t)=tv$, $t\in (-\rho,\rho)$, satisfies the geodesic equations also for the local coordinates  $(\Omega_\rho, \varphi_i)$ of  $M_{\infty}^{(y_{k;i})}$. 
Thus  the injectivity  radius of $M_\infty^{(y_{k:i})}$ is not smaller then $\rho$  and  $(\Omega_\rho, \varphi_i)$ is a normal system of coordinates. 
   
In terms of the definition above the argument of this subsection proves
the following statement. 
\begin{prop}
Let $M$ be a Riemannian  manifold with bounded geometry and let $Y$ be its $\hat\rho$-discretization, $\frac{\rho}{2}<\hat{\rho}<\rho < \frac{r(M)}{8}$. Then for every discrete sequence
$(y_{k})$ in $Y$ and its trailing system $\lbrace(y_{k;i})_{k\in\N}\rbrace_{i\in\N_{0}}$ there exists a renamed subsequence $(y_{k})$  that generates a Riemannian manifold at infinity $M_{\infty}^{(y_{k;i})}$ of the manifold $M$. The manifold $M_{\infty}^{(y_{k;i})}$ has bounded geometry and its injectivity radius is greater or equal than $\rho$. 
\end{prop}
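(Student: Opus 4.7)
The plan is to record that the construction performed throughout the preceding subsection already supplies essentially all the ingredients; what remains is to collect the various diagonal subsequence extractions into a single subsequence, and then to verify the two conditions of Definition \ref{def:bg} for $(M_\infty^{(y_{k;i})},\widetilde{g})$.

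First I would organize the choice of subsequence. Lemma \ref{Ji} produces a first subsequence on which the incidence set $\mathcal{K}$ (and hence $\mathbb{K}$) is well defined independent of $k$. For each pair $(i,j)\in\mathbb{K}$ the maps $\psi_{ij,k}=e_{y_{k;i}}^{-1}\circ e_{y_{k;j}}$ are uniformly bounded in $C^\infty(\bar\Omega_{2\rho})$ by Lemma \ref{lem:bdd-derivatives}, so by Ascoli--Arzel\`a they admit $C^\infty$-convergent subsequences, and the components $g_{\ell m}\circ e_{y_{k;i}}$ of the metric tensor in normal coordinates form a bounded family in $C^\infty(\Omega_\rho)$ as well. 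Enumerating pairs $(i,j)\in\mathbb{K}$ and taking nested extractions followed by a diagonal subsequence, as sketched just before the definition of $M_\infty^{(y_{k;i})}$, one obtains a single subsequence along which all the transition maps $\psi_{ij}$ and all the metric tensors $\widetilde{g}^{(i)}$ exist as $C^\infty$-limits simultaneously. The previously verified cocycle identity $\psi_{\ell i}=\psi_{\ell j}\circ\psi_{ji}$ and compatibility relation \eqref{eq:metric_1.2a} then let me invoke Corollary \ref{cor:gluing} to produce the smooth manifold $M_\infty^{(y_{k;i})}$, and \eqref{eq:metric} defines a globally well-defined Riemannian metric $\widetilde{g}$.

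Next I would establish that $r(M_\infty^{(y_{k;i})})\ge\rho$. In each chart $(\Omega_\rho,\varphi_i)$ the Christoffel symbols $\widetilde\Gamma^n_{m\ell}$ of $\widetilde{g}$ are polynomial expressions in the coefficients of $\widetilde{g}^{(i)}$ and their first derivatives, and hence are $C^\infty$-limits of the corresponding Christoffel symbols $\Gamma^n_{m\ell}$ of $g$ in the normal coordinates $e_{y_{k;i}}$. For every $k$ and every unit vector $v\in\R^N$ the identity $\Gamma^n_{m\ell}(tv)v^\ell v^m=0$ holds on $[0,\rho]$, so the same identity holds in the limit for $\widetilde\Gamma^n_{m\ell}$. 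Consequently the radial rays $t\mapsto tv$ are geodesics of $\widetilde{g}$ on $\Omega_\rho$, the chart $\varphi_i$ is the Riemannian exponential map at $\varphi_i(0)$ restricted to $\Omega_\rho$, and $\widetilde{g}$ is injective on this ball; since $i$ was arbitrary this gives $r(M_\infty^{(y_{k;i})})\ge\rho$.

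Finally, bounded geometry requires uniform bounds on $\nabla^k R$ for every $k\ge 0$. In the normal coordinates $e_{y_{k;i}}$ on $M$, the components of $\nabla^k R^M$ are polynomial combinations of derivatives of the $g_{\ell m}$ and of $R^M$, and are thus uniformly bounded in $C^\infty(\Omega_\rho)$, with bounds independent of $i$ and $k$, by boundedness of the geometry of $M$ together with Lemma \ref{lem:bdd-derivatives}. Passing to the $C^\infty$-limit preserves these bounds for the corresponding components of $\nabla^k R^{M_\infty^{(y_{k;i})}}$ in the chart $\varphi_i$, uniformly in $i$. Since $\{\varphi_i(\Omega_\rho)\}_{i\in\N_0}$ covers $M_\infty^{(y_{k;i})}$ and the metrics $\widetilde{g}^{(i)}$ satisfy a uniform coercivity estimate $\widetilde{g}^{(i)}_\xi[v,v]\ge\tfrac12|v|^2$, uniform pointwise bounds in coordinates translate into global $L^\infty$-bounds on the tensor norms, verifying condition (ii) of Definition \ref{def:bg}.

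The only delicate point is the diagonalization bookkeeping that guarantees one subsequence simultaneously yields the limits of $\psi_{ij,k}$, of the metric coefficients, and of their derivatives of all orders across the countable family of charts; this was already addressed in the preceding construction, so the main content of the proof is the transfer of the normal-coordinate identities and curvature bounds from $M$ to the limit manifold.
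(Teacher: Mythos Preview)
Your proposal is correct and follows essentially the same route as the paper: the proposition is stated as a summary of the construction carried out in Section~3, and your write-up collects exactly those ingredients---the diagonal subsequence extraction, Corollary~\ref{cor:gluing}, the Christoffel-symbol argument showing that the $\varphi_i$ are normal coordinates (hence $r(M_\infty^{(y_{k;i})})\ge\rho$), and the transfer of uniform $C^\infty$-bounds on the metric coefficients to obtain bounded geometry. Your treatment of condition~(ii) is slightly more explicit than the paper's, but the argument is the same.
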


\begin{rem}\label{rem:bg}
%1. The manifold at infinity is of bounded geometry, since its metric is defined as a $C^\infty$-limit of the %metric of $M$ which is of bounded geometry.
%
%1. We do not know whether for a given sequence $y_k$   the manifold at infinity is independent of the choice of %trailing system  $\lbrace y_{k;i}\rbrace_{i\in\N_{0}}$, $k\in\N$, of $Y$. 
%2. 
Let $M'$ be another manifold such that $M$ and $M'$ have respective
compact subsets $M_{0}$ and $M_{0}'$ such that $M\setminus M_{0}$
is isometric to $M'\setminus M_{0}'$, i. e. let $M'$ and $M$ coincide up to
a compact perturbation. Then  their respective manifolds at infinity for the same trailing systems coincide.
From this follows that  manifold at infinity of the manifold $M$  is not necessarily diffeomorphic %homeomorphic 
to $M$. 

%4. In general the charts  $(\varphi_{i},\Omega_{\rho})$ are not a normal coordinate systems, because $\varphi_{i}$
%are defined up to a diffeomorphism of $\varphi_{i}(\Omega_{\rho})$.
%We do not know how they are related to the exponential maps of $M_{\infty}^{(y_{k;i})}$.
\end{rem}

%%%%%%%%%%%%%%%%%%%%%%%%%%%%%%%%%%%%%%%%%%%%%%%%%%%%%%%%%%%%%%%%%%%%%%%%%%%%%%%%
%%%%%%%%%%%%%%%%%%%%%%%%%%%%%%%%%%%%%%%%%%%%%%%%%%%%%%%%%%%%%%%%%%%%%%%%%%%%%%%%%%%%%%%%
%%%%%%%%%%%%%%%%%%%%%%%%%%%%%%%%%%%%%%%%%%%%%%%%%%%%%%%%%%%%%%%%%%%%%%%%%%%%%%%
\section{Local and global profiles. Formulation of the main result}
%%%%%%%%%%%%%%%%%%%%%%%%%%%%%%%%%%%%%%%%%%%%%%%%%%%%%%%%%%%%%%%%%%%%%%%%
%%%%%%%%%%%%%%%%%%%%%%%%%%%%%%%%%%%%%%%%%%%%%%%%%%%%%%%%%%%%%%%%%%%%%%%%%
%%%%%%%%%%%%%%%%%%%%%%%%%%%%%%%%%%%%%%%%%%%%%%%%%%%%%%%%%%%%%%%%%%%%%%%%

In this section we state our main result. We will use the notation introduced in the last section. In particular we  will work with discrete sequences of points and related trailing systems described in Definition   \ref{def:trailing}. 

\begin{defn}
Let $M$ be a manifold of bounded geometry and $Y$ be its discretization. Let $(u_{k})$ be a bounded
sequence in $H^{1,2}(M).$ Let $(y_{k})$ be a sequence of points
in $Y$
%$\subset M$ (cf. Lemma \ref{lem:covering}) 
and let $\lbrace(y_{k;i})_{k\in\N}\rbrace_{i\in\N_{0}}$
be its trailing system. % (cf. Definition \ref{def:trailing}).
One says that $w_{i}\in H^{1,2}(\Omega_{\rho})$ is  \emph{a local profile}
of $(u_{k})$ relative to a trailing sequence $(y_{k;i})_{k\in\N}$, if,
on a renamed subsequence, $u_{k}\circ e_{y_{k;i}}\rightharpoonup w_{i}$
 in $H^{1,2}(\Omega_{\rho})$ as $k\to\infty$. If $(y_{k})$ is a
renamed (diagonal) subsequence such that $u_{k}\circ e_{y_{k;i}}\rightharpoonup w_{i}$
in $H^{1,2}(\Omega_{\rho})$ as $k\to\infty$ for all $i\in\N_{0}$, then
the family $\lbrace w_{i}\rbrace_{i\in\N_{0}}$
is called an \emph{array of local profiles of $(u_{k})$ relative to the trailing system $\lbrace(y_{k;i})_{k\in\N}\rbrace_{i\in\N_{0}}$ of the sequence $(y_{k})$}.
\end{defn}

%\begin{rem} THIS REMARK MAY NOT BE NEEDED
%The standard Sobolev norm of $H^{1,2}(M)$ in terms of the atlas $\lbrace(\Omega_{\rho},e_{y})\rbrace_{y\in Y}$
%has the form
%\begin{equation}
%\|u\|_{1,2}^{2}=\sum_{y\in Y}\int_{\Omega\rho}\chi_{y}\circ e_{y}[\sum_{\alpha,\beta=1}^{N}g_{\alpha\beta,y}\partial_{\alpha}(u\circ e_{y})
%\partial_{\beta}(u\circ e_{y})+|u\circ e_{y}|^{2}]\sqrt{g_{y}}dx,\label{eq:standardSobNorm}
%\end{equation}
%where $\{\chi_{y}\}_{y\in Y}$, is a smooth partition
%of unity on $M$ subordinated to the covering of $M$ by geodesic
%balls $B(y,\rho)$, $y\in Y$. The standard Sobolev norm of $H^{1,2}(M_{\infty}^{(y_{k})})$
%has an analogous expression in terms of the atlas $\lbrace(\Omega_{\rho},\varphi_{i})\rbrace_{i\in\N_{0}}$ 
%\end{rem}

\begin{prop}
\label{prop:W}Let $M$ be a manifold of bounded geometry and let $Y$ its discretization. Let 
$(u_{k})$ be a bounded sequence in $H^{1,2}(M).$  Let $\lbrace w_{i}\rbrace_{i\in\N_{0}}$
be an array of local profiles of $(u_{k})$ associated with a trailing system $\lbrace(y_{k;i})_{k\in\N}\rbrace_{i\in\N_{0}}$ related to the sequence $(y_k)$ in
$Y$. Then there exists a function $w:\ M_{\infty}^{(y_{k;i})}\to\R$
such that $w\circ\varphi_{i}=w_{i}$, $i\in\N_{0}$, where $\varphi_{i}:\Omega_{\rho}\to M_{\infty}^{(y_{k;i})}$ are local coordinate maps of 
$ M_{\infty}^{(y_{k;i})}$.
\end{prop}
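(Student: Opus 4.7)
The strategy is to define $w$ chart by chart by $w(x)\eqdef w_i(\varphi_i^{-1}(x))$ whenever $x\in U_i$. The whole content of the proposition then reduces to showing that this prescription is unambiguous on overlapping charts, i.e., that the local profiles satisfy the compatibility relation
\[
w_i = w_j\circ\psi_{ji}\quad\text{a.e.\ on } \Omega_{ij},\qquad (i,j)\in \mathbb{K},
\]
with $\psi_{ji}=\varphi_j^{-1}\circ\varphi_i$ the transition map constructed in Section 3. Once this is established, $w$ is uniquely (and measurably) determined on $M_\infty^{(y_{k;i})}$.

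The key identity from which compatibility follows is
\[
(u_k\circ e_{y_{k;j}})(\xi)=(u_k\circ e_{y_{k;i}})(\psi_{ij,k}(\xi)),
\]
valid wherever $\psi_{ij,k}(\xi)\in\Omega_\rho$, which is immediate from the definition $\psi_{ij,k}=e_{y_{k;i}}^{-1}\circ e_{y_{k;j}}$. For $\varphi\in C_c^\infty(\Omega_{ji})$, the inclusion $\psi_{ij,k}(\mathrm{supp}\,\varphi)\Subset\Omega_\rho$ holds for all sufficiently large $k$ by the $C^\infty$-convergence $\psi_{ij,k}\to\psi_{ij}$. A change of variables $\eta=\psi_{ij,k}(\xi)$ then transforms $\int_{\Omega_{ji}}\varphi\,(u_k\circ e_{y_{k;j}})\,d\xi$ into $\int \widetilde\varphi_k\,(u_k\circ e_{y_{k;i}})\,d\eta$, where $\widetilde\varphi_k\eqdef(\varphi\circ\psi_{ji,k})\,|\det D\psi_{ji,k}|$ converges uniformly to $\widetilde\varphi\eqdef(\varphi\circ\psi_{ji})\,|\det D\psi_{ji}|$, with supports contained in a fixed compact subset of $\Omega_{ij}$. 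By Rellich--Kondrachov, $u_k\circ e_{y_{k;i}}\to w_i$ and $u_k\circ e_{y_{k;j}}\to w_j$ strongly in $L^2(\Omega_\rho)$; passing to the limit on both sides and reversing the change of variables yields
\[
\int_{\Omega_{ji}}\varphi\, w_j\,d\xi=\int_{\Omega_{ji}}\varphi\,(w_i\circ\psi_{ij})\,d\xi
\]
for every test function, whence $w_j=w_i\circ\psi_{ij}$ a.e.\ on $\Omega_{ji}$, which is equivalent to the desired relation on $\Omega_{ij}$.

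The main subtle point is that the diffeomorphism $\psi_{ij,k}$ implementing the change of variables genuinely depends on $k$, so the transformed test function $\widetilde\varphi_k$ and its support vary with $k$; this is controlled entirely by the $C^\infty$-convergence of the transition approximations (a consequence of bounded geometry and Ascoli--Arzela, already established in Section 3), which supplies the uniform bounds on Jacobians needed to pair the uniform limit of $\widetilde\varphi_k$ with the strong $L^2$-limit of $u_k\circ e_{y_{k;i}}$. No further diagonal extraction is required, since the array-of-local-profiles hypothesis already provides a subsequence along which $u_k\circ e_{y_{k;i}}\rightharpoonup w_i$ for every $i\in\N_0$ simultaneously.
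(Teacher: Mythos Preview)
Your proof is correct and follows essentially the same approach as the paper: define $w$ chart by chart via $w=w_i\circ\varphi_i^{-1}$ and verify the compatibility relation $w_j=w_i\circ\psi_{ij}$ on overlaps by exploiting the identity $u_k\circ e_{y_{k;j}}=(u_k\circ e_{y_{k;i}})\circ\psi_{ij,k}$ together with the $C^\infty$-convergence $\psi_{ij,k}\to\psi_{ij}$. The only difference is one of presentation: the paper argues directly with a.e.\ convergence of $u_k\circ e_{y_{k;i}}$ and $u_k\circ e_{y_{k;j}}$ and passes the limit through the composition in one line, whereas you make the limiting step explicit by pairing with test functions, changing variables, and invoking Rellich--Kondrachov for strong $L^2$ convergence---a more transparent way to handle the fact that both the function and the diffeomorphism depend on $k$.
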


\begin{proof}
Functions $w_{i}$ are defined on $\Omega_\rho$ that is a domain of definition of $\varphi_i$.  
%the charts $\lbrace\varphi_{i},\Omega_{\rho}\rbrace_{i\in\N_{0}}$ of $M_{\infty}^{(y_{k})}$. 
Set $w\eqdef w_{i}\circ\varphi_{i}^{-1}$
on $\varphi_{i}^{-1}(\Omega_{\rho})$ and note that if $x\in\varphi_{i}^{-1}(\Omega_{\rho})\cap\varphi_{j}^{-1}(\Omega_{\rho})$
for some $j\in\N_{0}$, then $\varphi_{i}(x)\in\Omega_{ij}$, $\varphi_{j}(x)\in\Omega_{ji}$,
and, using the a.e. convergence of $u_{k}\circ e_{y_{k;i}}$ and $u_{k}\circ e_{y_{k;j}}$
to $w_{i}$ and $w_{j}$ respectively, and the uniform convergence
of $e_{y_{k;i}}^{-1}e_{y_{k;j}}$ to $\psi_{ij}$, we have
\begin{align}
w_{j}\circ\varphi_{j}^{-1} & =
\lim_{k\to\infty}u_{k}\circ e_{y_{k;j}}\circ\varphi_{j}^{-1}= 
\lim_{k\to\infty}u_{k}\circ e_{y_{k;i}}\circ e_{y_{k;i}}^{-1}\circ e_{y_{k;j}}\circ\varphi_{j}^{-1}=\nonumber
\\ \nonumber
& = w_{i}\circ\psi_{ij}\circ\varphi_{j}^{-1}=w_{i}\circ\varphi_{i}^{-1}\circ\varphi_{j}\circ\varphi_{j}^{-1}=w_{i}\circ\varphi_{i}^{-1}
\end{align}
almost everywhere in $\varphi_{i}^{-1}(\Omega_{\rho})\cap\varphi_{j}^{-1}(\Omega_{\rho})$. 
\end{proof}

\begin{defn}\label{def:gp}
Let $\lbrace w_{i}\rbrace_{i\in\N_{0}}$ be a local profile array of a bounded sequence $(u_{k})$
in $H^{1,2}(M)$ relative to a trailing system  $\lbrace(y_{k;i})_{k\in\N}\rbrace_{i\in\N_{0}}$. The function
$w:M_{\infty}^{(y_{k;i})}\to\R$ given by Proposition \ref{prop:W}
is called the \emph{global profile} of the sequence $(u_{k})$ relative
to $(y_{k;i})$.
\end{defn}

Let us fix a smooth partition of unity $\lbrace\chi_{y}\rbrace_{y\in Y}$ subordinated to the uniformly finite covering of $M$ by geodesic
balls $\lbrace B(y,\rho)\rbrace_{y\in Y}$, given by Lemma~\ref{lem:PU}.

\begin{defn}
Let $M$ be a manifold of bounded geometry and let $Y$ be its discretization. Let $M_{\infty}^{(y_{k;i})}$   be a manifold at infinity of $M$ generated by a trailing system  $\lbrace(y_{k;i})_{k\in\N}\rbrace_{i\in\N_{0}}$. 
An \emph{elementary concentration} associated with a function $w:M_{\infty}^{(y_{k;i})}\to\R$ is a sequence $(W_{k})_{k\in\N}$ of functions $M\to\R$ given
by
\begin{equation}
W_{k}=\sum_{i\in\N_{0}}\chi_{y_{k;i}}w\circ\varphi_{i}\circ e_{y_{k;i}}^{-1},\qquad k\in\N.\label{eq:elem-conc}
\end{equation}
where   $\varphi_{i}$ are the local coordinate maps of manifold $M_{\infty}^{(y_{k;i})}$.
% dolwith an atlas $\{\Omega_{\rho},\varphi_{i,\infty}\}_{i\in\N_0}$.
\end{defn}

In heuristic terms, after we find limits $w_{i}$, $i\in\N_{0},$ of  the sequence $(u_{k})$ under the ``trailing spotlights'' $(e_{y_{k;i}})_{k\in\N_{0}}$
that follow different trailing sequences $(y_{k;i})_{k\in\N}$ of $(y_{k})$, we give an approximate reconstruction $W_{k}$ of $u_{k}$
``centered'' on the moving center $y_{k}$ of the ``core spotlight''.
We do that by first splitting $w$ into local profiles $w\circ\varphi_{i}$, $i\in\N_0$, on the set $\Omega_\rho$, casting them onto the manifold $M$ in the vicinity of $y_{k;i}$ by composition with $e_{y_{k;i}}^{-1}$,
 %$(e_{y_{k;i}}^{-1})_{k\in\N_{0}}$
and patching all such compositions together by the partition of unity
on $M$. Such reconstruction approximates $u_{k}$ on geodesic balls
$B(y_{k},R)$ with any $R>0$, but it ignores the values of $u_{k}$
for $k$ large on the balls $B(y_{k}',R)$, with $d(y_{k},y_{k}')\to\infty$,
where $u_{k}$ is approximated by a different local concentration.
It has been shown in \cite{FiesTin} for the case of manifold $M$ with  cocompact action of a group of isometries
(in particular, for homogeneous spaces) that a global reconstruction
of $u_{k}$, up to a remainder vanishing in $L^p(M)$, is a sum elementary concentrations associated with all such mutually decoupled sequences. 

Similarly, the profile decomposition theorem below, which is the main result
of this paper, says any bounded sequence $(u_{k})$ in $H^{1,2}(M)$
has a subsequence that, up to a remainder vanishing in $L^{p}(M)$, $p\in(2,2^*)$, equals a sum of decoupled elementary concentrations. 

In the theorem and next sections we will work with countable families of discrete sequences of the set $Y$. To each sequence we assign a trailing system so in consequence also a the manifold at infinity. To simplify the notation we will index the  sequences in $Y$, the related trailing systems   the corresponding manifolds,  concentration profiles on these manifolds, etc. by $n$, i.e. we will write     $y_k^{(n)}$, $y^{(n)}_{k;i}$,   $M^{(n)}_\infty$, $w^{(n)}$, etc.      

%%%%%%%%%%%%%%%%%%%%%%%%%%%%%%%%%%%%%%%%%%%%%%%%%%%%%%%%%%%%%%%%%%%%%%%%%%%%%%%%%%%%%%%%%
\begin{thm}
\label{thm:main}
Let $M$ be a manifold of bounded geometry and let $Y$ be its discretization.  Let
$(u_{k})$ be a sequence in $H^{1,2}(M)$ weakly convergent to some function $w^{(0)}$ in $H^{1,2}(M)$. 
Then there exists a renamed subsequence of $(u_{k})$, sequences $(y_{k}^{(n)})_{k\in\N}$ in $Y$ 
%(cf. Lemma \ref{lem:covering})
, and associated with them global profiles $w^{(n)}$ 
%(cf. Definition~\ref{def:gp})
on the respective manifolds at infinity  
%(cf. Corollary~ \ref{cor:gluing}) 
$M_{\infty}^{(n)}$, 
%with the atlas $\lbrace\varphi_{i,\infty}^{(n)},\Omega_{\rho}\rbrace_{i\in\N_{0}}$,
$n\in\N$, such that $d(y_{k}^{(n)},y_{k}^{(m)})\to\infty$ when $n\neq m$, and 
\begin{equation}
u_{k}-w^{(0)}-\sum_{n\in\N}W_{k}^{(n)}\to 0\mbox{ in }L^{p}(M),\; p\in(2,2^{*}),\label{eq:PD}
\end{equation}
where $W_{k}^{(n)}=\sum_{i\in\N_{0}}\chi_{i}^{(n)}w^{(n)}\circ\varphi_{i}^{(n)}\circ e_{y_{k;i}^{(n)}}^{-1}$ are elementary concentrations,  $\varphi_{i}^{(n)}$ are the local coordinates of the manifolds $M^{(n)}_\infty$ and $\{\chi_i^{(n)}\}_{i\in\N_0}$ are the corresponding partitions of unity satisfying \eqref{eq:dalpha}. The series $\sum_{n\in\N}W_{k}^{(n)}$ converges in $H^{1,2}(M)$
unconditionally and uniformly in $k\in\N$. Moreover,
\begin{equation}
\|w^{(0)}\|_{H^{1,2}(M)}^{2} +\sum_{n=1}^{\infty}\|w^{(n)}\|_{H^{1,2}(M^{(n)}_\infty)}^{2}\le  \limsup\|u_{k}\|_{H^{1,2}(M)}^{2}\ ,\label{eq:Plancherel}
\end{equation}
 and 
\begin{equation}
\int_{M}|u_{k}|^{p}d\dv\to\int_{M}|w^{(0)}|^{p}\dv+\sum_{n=1}^{\infty}\int_{M_{\infty}^{(n)}}|w^{(n)}|^{p}\dvn.
\label{eq:newBL}
\end{equation}
\end{thm}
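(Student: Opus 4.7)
\smallskip

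\textbf{Proof proposal.} The plan is a classical inductive extraction of concentration profiles, driven by the spotlight lemma, combined with a diagonal argument in $n$ and $k$. The work has four parts: extraction, asymptotic decoupling of centers, orthogonality/Plancherel, and remainder vanishing.

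First, I would set $v_k^{(0)}:=u_k-w^{(0)}$, a weakly null, $H^{1,2}$-bounded sequence, and iterate as follows. Given $v_k^{(n-1)}$, if $v_k^{(n-1)}\to 0$ in $L^{p}(M)$ set everything beyond stage $n-1$ to be trivial; otherwise Lemma \ref{lem:spotlight} furnishes a sequence $(y_k^{(n)})\subset Y$ and a subsequence with $v_k^{(n-1)}\circ e_{y_k^{(n)}}\rightharpoonup w_0^{(n)}\ne 0$ in $H^{1,2}(\Omega_\rho)$. Fix a trailing system $(y_{k;i}^{(n)})$ of $(y_k^{(n)})$, and by a first diagonal extraction together with the construction of Section~3 arrange that all $v_k^{(n-1)}\circ e_{y_{k;i}^{(n)}}\rightharpoonup w_i^{(n)}$ in $H^{1,2}(\Omega_\rho)$ for every $i\in\N_0$ and that the manifold at infinity $M_\infty^{(n)}$ exists. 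Proposition \ref{prop:W} then produces the global profile $w^{(n)}\in H^{1,2}(M_\infty^{(n)})$; define $v_k^{(n)}:=v_k^{(n-1)}-W_k^{(n)}$ with $W_k^{(n)}$ the associated elementary concentration.

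The key decoupling step is to show $d(y_k^{(n)},y_k^{(m)})\to\infty$ for $n\ne m$. If along some subsequence $d(y_k^{(n)},y_k^{(m)})\le R<\infty$, uniform local finiteness of the covering by balls $B(y,\rho)$ forces $y_k^{(n)}$ to equal some $y_{k;j}^{(m)}$ with $j$ in a finite set determined by $R$; passing to a further subsequence we may fix $j$. The reconstruction statements of Section~5 must be invoked at this point to conclude $W_k^{(m)}\circ e_{y_{k;j}^{(m)}}\to w_j^{(m)}$ in $H^{1,2}(\Omega_\rho)$, so subtracting $W_k^{(m)}$ kills the local profile at that trailing point; by the inductive hypothesis no subsequent $W_k^{(\ell)}$, $\ell>m$, reintroduces mass there because its concentration centers recede to infinity. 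This contradicts $w_0^{(n)}\ne 0$ and gives the required decoupling. I expect this to be the main obstacle, because it rests on a careful estimate showing that partition-of-unity patching in the definition of $W_k^{(n)}$ does not distort local profiles in the limit, and that weakly vanishing terms supported far away do not interfere.

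Once decoupling is established I would prove the orthogonality identity
\[
\|v_k^{(n-1)}\|_{1,2}^{2}=\|v_k^{(n)}\|_{1,2}^{2}+\|W_k^{(n)}\|_{1,2}^{2}+o(1),\qquad \|W_k^{(n)}\|_{1,2}^{2}\to\|w^{(n)}\|_{H^{1,2}(M_\infty^{(n)})}^{2},
\]
using the chart expressions of the norm and the $C^\infty$-convergence of the metric coefficients to those of $M_\infty^{(n)}$ in geodesic normal coordinates. Summing and invoking Brezis--Lieb for $\|v_k^{(0)}\|_{1,2}^{2}=\|u_k\|_{1,2}^{2}-\|w^{(0)}\|_{1,2}^{2}+o(1)$ yields \eqref{eq:Plancherel}, hence $\|w^{(n)}\|\to 0$ and unconditional, $k$-uniform convergence of $\sum_n W_k^{(n)}$ in $H^{1,2}(M)$. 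A Cantor diagonal in $n,k$ then makes everything simultaneous.

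Finally, for \eqref{eq:PD} I would set $R_k^{(N)}:=u_k-w^{(0)}-\sum_{n=1}^N W_k^{(n)}$, apply the $H^{1,2}$-boundedness of $R_k^{(N)}$ together with inequality \eqref{eq:intro2}, and show that $\sup_{y\in Y}\|R_k^{(N)}\|_{L^p(B(y,\rho))}\to 0$ when $k\to\infty$ and then $N\to\infty$: the first limit uses the spotlight lemma (all constructed local profiles have been subtracted, and any new concentration would force a stage $N+1$ with an additional contribution of size bounded below by the $L^p$ mass, violating \eqref{eq:Plancherel}); the second uses that the total mass available for a new profile tends to zero with $N$. For \eqref{eq:newBL} I would apply an iterated Brezis--Lieb on $M$: asymptotic decoupling of supports of the $W_k^{(n)}$ gives $\int_M|u_k|^p=\int_M|w^{(0)}|^p+\sum_n\int_M|W_k^{(n)}|^p+o(1)$, and a change of variables via the charts of $M_\infty^{(n)}$, controlled by the $C^\infty$-convergence of the metrics, converts each summand to $\int_{M_\infty^{(n)}}|w^{(n)}|^p\,dv_{\tilde g^{(n)}}$.
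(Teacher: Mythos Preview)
Your overall architecture is right, but there is a genuine gap in the extraction step that propagates to the remainder-vanishing argument. You select $(y_k^{(n)})$ merely by appealing to the contrapositive of Lemma~\ref{lem:spotlight}: ``some sequence along which $v_k^{(n-1)}\circ e_{y_k^{(n)}}$ has a nonzero weak limit''. Nothing then prevents you from extracting, at each stage, a tiny profile while a large one remains; e.g.\ profiles with $\|w^{(n)}\|_{H^{1,2}}=2^{-n}$ are perfectly compatible with \eqref{eq:Plancherel} yet leave an unaccounted profile of norm $1$ sitting in the remainder. Your argument for \eqref{eq:PD} (``any new concentration would force a stage $N{+}1$ with an additional contribution of size bounded below \dots'') tacitly assumes a greedy selection that you never made. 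The paper closes this gap by introducing the \emph{modulus} $\nu^{(u_k)}((y_k^{(1)}),\dots,(y_k^{(m)}))$ (Definition~\ref{def:modulus}): the supremum of $\|w\|_{H^{1,2}(M_\infty)}^2$ over all global profiles generated by sequences \emph{already decoupled} from the first $m$ centers. One then chooses $(y_k^{(m+1)})$ so that $\|w^{(m+1)}\|^2\ge\tfrac12\nu^{(u_k)}$; since $\sum\|w^{(n)}\|^2<\infty$ by \eqref{eq:Plancherel}, this forces $\nu\to 0$, and any putative leftover profile along a fully decoupled sequence has norm $\le\nu$, hence vanishes. That is the missing idea.

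A second, less serious, divergence: you extract an arbitrary nonzero profile and then argue \emph{a posteriori} that $d(y_k^{(n)},y_k^{(m)})\to\infty$. The paper avoids this entirely by restricting the supremum defining $\nu$ to sequences that are decoupled from the previous ones \emph{by fiat}; decoupling is then part of the construction, not a lemma. Your a posteriori route is workable (it is essentially the paper's Step~2, case~1, and Step~5 argument run at each stage), but note that you only need $W_k^{(m)}\circ e_{y_{k;j}^{(m)}}\rightharpoonup w_j^{(m)}$ weakly, not the strong $H^{1,2}$ convergence you claim; the strong statement is not established in Section~5 and is not needed. Finally, your asymptotic Pythagoras identity $\|v_k^{(n-1)}\|^2=\|v_k^{(n)}\|^2+\|W_k^{(n)}\|^2+o(1)$ is correct but requires both Lemma~\ref{lem:projection} and Lemma~\ref{lem:length} together with $\langle W_k^{(\ell)},W_k^{(n)}\rangle\to 0$ for $\ell\neq n$, i.e.\ the content of Lemma~\ref{lem:Plancherel}; it is not a one-line consequence of ``$C^\infty$-convergence of the metric coefficients''.
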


%%%%%%%%%%%%%%%%%%%%%%%%%%%%%%%%%%%%%%%%%%%%%%%%%%%%%%%%%%%%%%%%%%%%%%%%%%%%%%
%%%%%%%%%%%%%%%%%%%%%%%%%%%%%%%%%%%%%%%%%%%%%%%%%%%%%%%%%%%%%%%%%%%%%%%%%%%%%%%%%
%%%%%%%%%%%%%%%%%%%%%%%%%%%%%%%%%%%%%%%%%%%%%%%%%%%%%%%%%%%%%%%%%%%%%%%%%%%%%%
\section{Auxiliary statements concerning profile decomposition}
%%%%%%%%%%%%%%%%%%%%%%%%%%%%%%%%%%%%%%%%%%%%%%%%%%%%%%%%%%%%%%%%%%%%%%%%%%%
%%%%%%%%%%%%%%%%%%%%%%%%%%%%%%%%%%%%%%%%%%%%%%%%%%%%%%%%%%%%%%%%%%%%%%%%%%%%%%%
%%%%%%%%%%%%%%%%%%%%%%%%%%%%%%%%%%%%%%%%%%%%%%%%%%%%%%%%%%%%%%%%%%%%%%%%%%%%%%%%%%

%DEFINITION OF TRAILING SYSTEM NEEDED
In Sections 5, 6 and 7 we assume that conditions of Theorem \ref{thm:main}
hold true. First we prove the inequality for the norms introduced in Lemma \ref{eqv:norms}.
%%%%%%%%%%%%%%%%%%%%%%%%%%%%%%%%%%%
%%%%%%%%%%%%%%%%%%%%%%%%%%%%%%%%%%%%%%%
\begin{lem} 
\label{lem:Plancherel0}Let $(u_{k})$ be a bounded sequence in $H^{1,2}(M)$,
let $M_{\infty}^{(y_{k;i})}$ be a manifold at infinity of $M$ generated
by a trailing system $\lbrace(y_{k;i})_{k\in\N}\rbrace_{i\in\N_{0}}$, and let $w\in H^{1,2}(M_{\infty}^{(y_{k;i})})$
be the associated global profile of $(u_{k})$. Then 
\[
\liminf\nl u_{k}\nr_{H^{1,2}(M)}^{2}\ge\nl w\nr_{H^{1,2}(M_{\infty}^{(y_{k;i})})}^{2}
\]
\end{lem}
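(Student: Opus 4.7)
The plan is to use the representation of $\nl\cdot\nr$ from Lemma \ref{eqv:norms} as a sum of local $H^{1,2}$-contributions, indexed by the discretization $Y$ on the $M$-side and by the chart atlas $\{(U_{i},\tau_{i})\}_{i\in\N_{0}}$ on the $M_\infty^{(y_{k;i})}$-side, and then to pass to the limit chart by chart. Under this representation the desired inequality reduces to a chart-level weak lower semicontinuity statement plus a combinatorial control stemming from the fact that distinct trailing indices $i$ yield distinct points $y_{k;i}\in Y$ for every $k$, so that summing contributions over finitely many indices $i$ produces only a partial sum of $\nl u_{k}\nr^{2}$.

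\emph{Local step.} Fix $i\in\N_{0}$. Pulling the $i$-th contribution to $\nl u_{k}\nr^{2}$ back to $\Omega_{\rho}$ via $e_{y_{k;i}}$ gives a quadratic form
\[
Q_{k}^{(i)}(v)\eqdef\int_{\Omega_{\rho}}\bigl(|dv|_{g_{k}^{(i)}}^{2}+v^{2}\bigr)\sqrt{\det g_{k}^{(i)}}\,dx,\qquad g_{k}^{(i)}\eqdef e_{y_{k;i}}^{*}g,
\]
while pulling the corresponding contribution to $\nl w\nr^{2}$ back via $\varphi_{i}$ gives the analogous form $Q^{(i)}$ built from the limiting metric $\widetilde{g}^{(i)}$ of (\ref{eq:pre-metric}). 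By the construction of Section 3 one has $g_{k}^{(i)}\to\widetilde{g}^{(i)}$ in $C^{\infty}(\Omega_{\rho})$, while by hypothesis $u_{k}\circ e_{y_{k;i}}\rightharpoonup w_{i}$ in $H^{1,2}(\Omega_{\rho})$. Uniform (hence $L^{\infty}$) convergence of the bounded coefficients of $Q_{k}^{(i)}$ permits replacement of $Q_{k}^{(i)}$ by the fixed form $Q^{(i)}$ with an error that is $o(1)$ on any $H^{1,2}$-bounded sequence, and weak lower semicontinuity applied to $Q^{(i)}$ then gives
\[
Q^{(i)}(w_{i})\le\liminf_{k\to\infty}Q_{k}^{(i)}(u_{k}\circ e_{y_{k;i}}).
\]

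\emph{Summation and conclusion.} For any finite $I\subset\N_{0}$, superadditivity of $\liminf$ yields
\[
\sum_{i\in I}Q^{(i)}(w_{i})\le\liminf_{k\to\infty}\sum_{i\in I}Q_{k}^{(i)}(u_{k}\circ e_{y_{k;i}})\le\liminf_{k\to\infty}\nl u_{k}\nr_{H^{1,2}(M)}^{2},
\]
the second inequality using that the points $y_{k;0},y_{k;1},\dots$ are pairwise distinct members of $Y$ by definition of the trailing system, so their local contributions form a partial sum of $\nl u_{k}\nr^{2}$. Letting $I\nearrow\N_{0}$ and applying monotone convergence to the left-hand side gives the claimed inequality. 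The main obstacle in this scheme is the local step: weak lower semicontinuity is not automatically stable under $k$-dependent perturbations of the bilinear form, but the $C^{\infty}$-convergence of the pulled-back metrics from Section 3 makes the comparison of $Q_{k}^{(i)}$ and $Q^{(i)}$ uniform on $H^{1,2}$-bounded sets, which is all that is needed.
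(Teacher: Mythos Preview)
Your local step is fine, but the summation step does not connect to $\nl\cdot\nr$ as defined in Lemma~\ref{eqv:norms}. That norm is
\[
\nl f\nr_{H^{1,2}(M)}^{2}=\sum_{y\in Y}\|\chi_{y}\, f\circ e_{y}\|_{H^{1,2}(\R^{N})}^{2},
\]
so each summand carries the partition-of-unity cutoff $\chi_{y}$ and is measured in the \emph{flat} Euclidean $H^{1,2}$-norm. Your quantity $Q_{k}^{(i)}(u_{k}\circ e_{y_{k;i}})$ is instead the Riemannian energy $\int_{B(y_{k;i},\rho)}(|\nabla u_{k}|^{2}+u_{k}^{2})\,dv_{g}$, with no cutoff. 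These are not the terms of $\nl u_{k}\nr^{2}$, so your ``partial sum'' justification for
\[
\sum_{i\in I}Q_{k}^{(i)}(u_{k}\circ e_{y_{k;i}})\le\nl u_{k}\nr_{H^{1,2}(M)}^{2}
\]
fails: the balls $B(y_{k;i},\rho)$ overlap, and without the cutoffs the left side overcounts and is only bounded by a multiplicity constant times the right side. The same mismatch occurs on the $M_{\infty}$ side, where $\sum_{i}Q^{(i)}(w_{i})$ again overcounts and is not equal to $\nl w\nr^{2}$; your argument gives at best an inequality with a constant, not the stated one.

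The paper's proof repairs exactly this by keeping the cutoffs throughout: one works with $(\chi_{y_{k;i}}u_{k})\circ e_{y_{k;i}}$, observes that $\chi_{y_{k;i}}\circ e_{y_{k;i}}\to\eta_{i}$ in $C^{\infty}(\Omega_{\rho})$ along the same subsequence, and shows that the limits $\{\eta_{i}\circ\varphi_{i}^{-1}\}_{i}$ form a partition of unity on $M_{\infty}^{(y_{k;i})}$ subordinate to $\{\varphi_{i}(\Omega_{\rho})\}_{i}$. Weak lower semicontinuity is then applied to the fixed flat norm $\|\cdot\|_{H^{1,2}(\R^{N})}$ (no $k$-dependent coefficients are needed), giving
\[
\|\eta_{i}w_{i}\|_{H^{1,2}(\R^{N})}^{2}\le\liminf_{k}\|(\chi_{y_{k;i}}u_{k})\circ e_{y_{k;i}}\|_{H^{1,2}(\R^{N})}^{2},
\]
which sums without overcounting to the claimed inequality. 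This construction of the partition of unity on $M_{\infty}^{(y_{k;i})}$ is moreover reused in the subsequent lemmas, so it is not an optional refinement.
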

%%%%%%%%%%%%%%%%%%%%%%%%%%%%%
\begin{proof}
%Consider the formula (\ref{eq:standardSobNorm}) applied to
%Let $\chi_{k;i}$ be a resolution of unity subordinated to the covering $\{B(y_{k;i},\hat{\rho})_i\}_i$. it is obvious that the family is also subordinated to the covering $\{B(y_{k;i},\rho)_i\}_i$.  Please note that the family  $\hat{\chi}_{k;i}$ depends  on the discretization $Y$ and  only its enumeration depends on the given trailing system  $\{y_{k;i}\}$. 

Let $\{\chi_y\}_{y\in Y}$ be the partition of unity given by Lemma~\ref{lem:PU}, and let us enumerate it for each $k\in\N$ according to the enumeration $\{y_{k;i}\}_{i\in\N_0}$ of $Y$, namely $i\mapsto \chi_{y_{k;i}}$, $i\in\N_0$.  In other words, for every $k$ the set $\{ \chi_{y_{k;i}}\}_{i_\in\N_0}$ equals the set $\{\chi_y\}_{y\in Y}$, and only its enumeration depends on the given trailing system  $\lbrace(y_{k;i})_{k\in\N}\rbrace_{i\in\N_{0}}$.
By Ascoli-Arzela theorem,   we can define  for any $i$ a function  $\eta_i$  on $\Omega_\rho$ by the formula  
\begin{equation}\label{eq:eta}
\eta_{i} = \lim_{k\rightarrow \infty} \chi_{y_{k;i}}\circ e_{y_{k;i}}
%=\lim\chi_{y_{k;i}}\circ e_{y_{k;j}}\circ e_{y_{k;j}}^{-1}\circ e_{y_{k;i}}=\chi_{j}\circ\psi_{ji}.
\end{equation}
The functions $\eta_i$ are smooth functions supported in $\Omega_\rho$.  Moreover, using  the diagonalization argument if needed, we get 
\[
\eta_{i} = 
\lim_{k\rightarrow \infty} \chi_{y_{k;i}}\circ e_{y_{k;j}}\circ e_{y_{k;j}}^{-1}\circ e_{y_{k;i}}=\eta_{j}\circ\psi_{ji}.
\] 
Since $\sum_{i\in\N_{0}}{\chi}_{y_{k;i}}\circ e_{y_{k;j}}=1$ on $\Omega_{\rho}$
for any $j\in\N_{0}$, we have in the limit $\sum_{i\in\N_{0}:\,(i,j)\in\mathbb{K}}\eta_{i}\circ\psi_{ij}=1$
on $\Omega_{\rho}$, cf. Lemma \ref{Ji}. So the family of the functions
\begin{equation}\label{eq:chii}
\chi^{(y_{k;i})}_i\eqdef \eta_i\circ\varphi_{i}^{-1},\qquad i\in\N_{0}
\end{equation}
is a partition of unity on $M_{\infty}^{(y_{k;i})}$, subordinated
to the covering $\lbrace\varphi_{i}(\Omega_{\rho})\rbrace_{i\in\N_{0}}$ of $M_{\infty}^{(y_{k;i})}$, and it is easy to see that it satisfies \eqref{eq:dalpha}.
 
Both the manifolds $M$ and $M_{\infty}^{(y_{k;i})}$ have bounded geometry, and therefore
\begin{align}
 \liminf_{k\rightarrow \infty}  &\nl u_{k}\nr_{H^{1,2}(M)}^2 = \liminf_{k\rightarrow \infty}\sum_{i\in\N_{0}} \| \big(\chi_{y_{k;i}} u_{k}\big)\circ e_{y_{k;i}}\|^2_{H^{1,2}(\R^N)}\ge \\
  \ge  & \, \sum_{i\in\N_{0}}  \liminf_{k\rightarrow \infty} \|\big(\chi_{y_{k;i}} u_{k}\big)\circ e_{y_{k;i}}\|^2_{H^{1,2}(\R^N)} \ge 
   \sum_{i\in\N_{0}}  \ \|{\eta}_{i} w_i\|^2_{H^{1,2}(\R^N)} = \nonumber \\
 \,= &\, \sum_{i\in\N_{0}}  \ \|{\chi}^{(y_{k;i})}_{i} w\circ \varphi_i\|^2_{H^{1,2}(\R^N)} \ge  \nl w\nr_{H^{1,2}(M_{\infty}^{(y_{k;i})})}^{2}\nonumber 
\end{align}
\end{proof}
%%%%%%%%%%%%%%%%%%%%%%%%%%%%%%%%%%%%%%%%%%%%%%%%%%%%%%%%%%%%%%%%%%%
%%%%%%%%%%%%%%%%%%%%%%%%%%%
%%%%%%%%%%%%%%%%%%%%
\begin{lem}
\label{lem:decoupling}  Let $\lbrace(y_{k;i})_{k\in\N}\rbrace_{i\in\N_{0}}$ be a trailing system for a discrete sequence $(y_{k})$  and let $w\in H^{1,2}(M_{\infty}^{(y_{k;i})})$.  Then  the elementary concentration $W_{k}^{(y_{k;i})}$   associated with this  system  
belongs to $H^{1,2}(M))$. Moreover there is a positive constant $C$ independent of $k$ and $i$ such that   
\begin{equation}\label{eq:Wkw}
\|W_{k}^{(y_{k;i})} \|_{H^{1,2}(M)} \le C\, \|w\|_{H^{1,2}(M_{\infty}^{(y_{k;i})})} 
\end{equation}

If   $(y'_k)_{k\in\N}$ is a  discrete sequence  such that $d(y_{k},y_{k}')\to\infty$, then the elementary concentration $W_{k}^{(y_{k;i})}$
satisfies
\[
W_{k}^{(y_{k;i})}\circ e_{y'_{k}}\to0
\]
in $H^{1,2}(\Omega_{\rho})$.
\end{lem}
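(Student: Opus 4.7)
The plan is to reduce both the norm bound \eqref{eq:Wkw} and the vanishing assertion to an equivalence between the global $H^{1,2}$-norm and a sum of local $H^{1,2}(\Omega_\rho)$-norms associated with the partition of unity, in the spirit of Lemma \ref{eqv:norms}:
\[
\|u\|_{H^{1,2}(M)}^{2}\sim\sum_{j\in\N_{0}}\|(\chi_{y_{k;j}}u)\circ e_{y_{k;j}}\|_{H^{1,2}(\Omega_\rho)}^{2},
\]
with an analogous equivalence on $M_{\infty}^{(y_{k;i})}$ in terms of the partition $\{\chi_i^{(y_{k;i})}\}$ from \eqref{eq:chii}. Bounded geometry of $M$ (and, via the construction of Section~3, of $M_{\infty}^{(y_{k;i})}$) yields equivalence constants that are independent of $k$.

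To prove the first part and \eqref{eq:Wkw} I would plug $u=W_{k}^{(y_{k;i})}$ into the first equivalence. Writing $w_i=w\circ\varphi_i$ and $\psi_{ij,k}=e_{y_{k;i}}^{-1}\circ e_{y_{k;j}}$, a short computation gives on $\Omega_\rho$
\[
(\chi_{y_{k;j}}W_{k}^{(y_{k;i})})\circ e_{y_{k;j}}(\xi)=(\chi_{y_{k;j}}\circ e_{y_{k;j}})(\xi)\sum_{i\in J_{j}}(\chi_{y_{k;i}}\circ e_{y_{k;j}})(\xi)\,w_i(\psi_{ij,k}(\xi)),
\]
where only the uniformly finitely many indices $i\in J_j$ contribute, by Lemma \ref{Ji}. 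Bounded geometry provides uniform $C^{\infty}$-bounds on $\chi_{y_{k;i}}\circ e_{y_{k;j}}$ and on $\psi_{ij,k}$ together with its inverse, so a change-of-variables argument bounds each summand in $H^{1,2}(\Omega_\rho)$ by a universal constant times $\|w_i\|_{H^{1,2}(\Omega_\rho)}$. Summing over $j\in\N_0$ and invoking the symmetry $i\in J_j\Leftrightarrow j\in J_i$ (with $|J_i|$ uniformly bounded), one arrives at
\[
\|W_{k}^{(y_{k;i})}\|_{H^{1,2}(M)}^{2}\lesssim\sum_{j}\sum_{i\in J_j}\|w_i\|_{H^{1,2}(\Omega_\rho)}^{2}\lesssim\sum_{i}\|w_i\|_{H^{1,2}(\Omega_\rho)}^{2}\lesssim\|w\|_{H^{1,2}(M_{\infty}^{(y_{k;i})})}^{2},
\]
with a constant independent of $k$, which establishes both claims.

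For the vanishing statement, observe that the $i$-th term of $(W_{k}^{(y_{k;i})}\circ e_{y'_k})(\xi)$ with $\xi\in\Omega_\rho$ is nonzero only if $B(y_{k;i},\rho)\cap B(y'_k,\rho)\neq\emptyset$; let $I_k$ denote this set of indices. The uniformly locally finite covering gives $|I_k|\le C$ independently of $k$. The Bishop--Gromov volume-comparison argument used in the proof of Lemma \ref{Ji} (disjointness of the balls $B(y_{k;\ell},\hat\rho/2)$ with uniformly bounded volumes from below, combined with an upper bound on the volume of $B(y_k,R)$) yields a bound $d(y_k,y_{k;i})\le R(i)$ depending on $i$ and on the bounded geometry data alone. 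Consequently, for each fixed $i$,
\[
d(y_{k;i},y'_k)\ge d(y_k,y'_k)-R(i)\longrightarrow\infty\quad\text{as }k\to\infty,
\]
so $\min I_k\to\infty$. Applying the same local $H^{1,2}(\Omega_\rho)$-estimate as in the preceding paragraph to the (boundedly many) surviving summands,
\[
\|W_{k}^{(y_{k;i})}\circ e_{y'_k}\|_{H^{1,2}(\Omega_\rho)}^{2}\lesssim\sum_{i\in I_k}\|w_i\|_{H^{1,2}(\Omega_\rho)}^{2}\le\sum_{i\ge\min I_k}\|w_i\|_{H^{1,2}(\Omega_\rho)}^{2}\longrightarrow 0,
\]
since the convergent series $\sum_i\|w_i\|^2$ is comparable to $\|w\|^2_{H^{1,2}(M_{\infty}^{(y_{k;i})})}$.

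The main technical obstacle I anticipate is maintaining uniformity of every constant involved: the norm equivalences on $M$ and on $M_{\infty}^{(y_{k;i})}$, the cardinalities $|J_i|$ and $|I_k|$, and the $C^{\infty}$-bounds on $\psi_{ij,k}$ and on $\chi_{y_{k;i}}\circ e_{y_{k;j}}$. All such bounds must depend only on the bounded geometry data of $M$, independently of $i$, $j$, and $k$; they are supplied by the appendix lemmas and by Lemma \ref{Ji}, but their careful bookkeeping is where the real work of the proof lies.
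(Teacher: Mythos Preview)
Your proposal is correct and follows essentially the same route as the paper: the bound \eqref{eq:Wkw} is obtained via the equivalent localized norm of Lemma~\ref{eqv:norms} combined with the uniform finiteness from Lemma~\ref{Ji}, and the vanishing at $y'_k$ is proved by observing that only indices $i\in I_k$ contribute while the tail $\sum_{i\ge \min I_k}\|w_i\|_{H^{1,2}(\Omega_\rho)}^2$ of the convergent series goes to zero. The only cosmetic difference is that the paper phrases the second part as an $\epsilon$--$N_\epsilon$ head/tail split (choosing $N_\epsilon$ first and then showing the head has support disjoint from $B(y'_k,\rho)$), whereas you argue directly that $\min I_k\to\infty$; these are equivalent formulations of the same idea.
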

%%%%%%%%%%%%%%%%%%%%%%%%
%%%%%%%%%%%%%%%%%%%%%%%%%
\begin{proof}
We recall that  
\begin{equation}
W_{k}^{(y_{k;i})}=\sum_{i\in\N_{0}} \chi_{y_{k;i}}\, w\circ\varphi_{i}\circ e_{y_{k;i}}^{-1}, \label{eq:Wkdef}
\end{equation}
cf. \eqref{eq:elem-conc}. The functions $ \chi_{y_{k;i}} \circ e_{y_{k;i}}$ are  smooth compactly supported functions on $\Omega_\rho$ and the family $\big\{\chi_{y_{k;i}} \circ e_{y_{k;i}}\big\}$ is a bounded set in $C^\infty(\Omega_\rho)$. By the boundedness of the geometry, cf. Lemma \ref{Ji} and Lemma \ref{eqv:norms},  and using \eqref{eq:chii}, we have 
\begin{align} \nonumber
& \|\chi_{y_{k;i}} \circ e_{y_{k;i}}\, w\circ \varphi_i\|^2_{H^{1,2}(\R^N)}  \le   C\|\chi_{y_{k;i}} \circ e_{y_{k;i}}\circ \tau_i w\|^2_{H^{1,2}(M_{\infty}^{(y_{k;i})})} \le  \\
&\qquad\qquad\qquad\le C \sum_{j:\,(i,j)\in \mathbb K}\|\chi^{(y_{k;i})}_i w\|^2_{H^{1,2}(M_{\infty}^{(y_{k;i})})} \nonumber
\end{align} 
So using once more Lemma \ref{eqv:norms} we get 
    \begin{align}\label{eq:Wkw1}
\|W_{k}^{(y_{k;i})} \|^2_{H^{1,2}(M)} \le  &\, C \sum_{i}\|\chi_{y_{k;i}}\circ e_{y_{k;i}}\, w\circ\varphi_{i} \|^2_{H^{1,2}(\R^{^N})} \le\\  
%\le	&\, C \sum_{i}\|\eta_i\circ \tau_i w\|^2_{H^{1,2}(M_{\infty}^{(y_{k;i})})}\le \nonumber\\
 \le &\,	C \sum_{i} \sum_{j:\,(i,j)\in \mathbb K}\|\chi^{(y_{k;i})}_j w\|^2_{H^{1,2}(M_{\infty}^{(y_{k;i})})} \le C \|w\|^2_{H^{1,2}(M_{\infty}^{(y_{k;i})})} . \nonumber
	\end{align}
This proves \eqref{eq:Wkw}. 

Let $\epsilon>0$. If follows from \eqref{eq:Wkw1} that there exist  $N_{\epsilon}\in\N$ independent of $k$ such that  
\begin{equation}\label{eq:Wk1}
\sum_{i\ge N_{\epsilon}} \|\chi_{y_{k;i}}\circ e_{y_{k;i}}\, w\circ\varphi_{i} \|^2_{H^{1,2}(\R^{^N})} \le \epsilon
\end{equation}
By \eqref{eq:Wkdef} we have 
\begin{equation}
W_{k}^{(y_{k;i})}\circ e_{y'_{k}}=\sum_{i\in I_k}(\chi_{y_{k;i}}w\circ\varphi_{i}\circ e_{y_{k;i}}^{-1})\circ e_{y'_{k}}, \label{eq:Wk}
\end{equation}
where $I_k =\{i: B(y_{k}',\rho)\cap B(y_{k;i},\rho)\not= \emptyset \}$.  
Since $d(y_{k},y'_{k})\to\infty$, we have 
$$\sup_{i\le N_{\epsilon}}d(y_{k;i},y_{k}')\ge d(y_{k},y_{k}')-2N_{\epsilon}\rho\to\infty$$
as $k\to\infty$, and thus $B(y_{k}',\rho)\cap B(y_{k;i},\rho)=\emptyset$
for all $i\le N_{\epsilon}$ if $k$ is sufficiently large. Then $\sum_{i=1}^{N_{\epsilon}}(\chi_{y_{k;i}}w\circ\varphi_{i})\circ e_{y_{k;i}}^{-1}\circ e_{y_{k}'}=0$
for all $k$ large, which together with (\ref{eq:Wk1}) proves the
lemma. 
\end{proof}
%%%%%%%%%%%%%%%%%%%%%%%%%%%%%%%%%%%%%%%%
%%%%%%%%%%%%%%%%%%%%%%%%%%%%%%%%%%%%%%%%%%
%%%%%%%%%%%%%%%%%%%%%%%%%%%
\begin{lem}
\label{lem:projection} Let $w$ be a profile of the sequence $(u_k)$, given by Proposition~\ref{prop:W} relative to a trailing system $\{(y_{k;i})_{k\in\N}\}_{i\in\N_0}$, and let $W_k$ be the associated concentration sequence. The following holds true:
\begin{equation}
\lim_{k\to\infty}\langle u_{k},W_{k}\rangle_{H^{1,2}(M)}=\|w\|^2_{H^{1,2}(M_\infty^{(y_{k;i})})}.\label{eq:P2}
\end{equation}
\end{lem}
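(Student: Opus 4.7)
The plan is to localize the inner product to each chart, pass to the limit termwise by combining weak $H^{1,2}$-convergence on $\Omega_\rho$ with $C^\infty$-convergence of the pulled-back metrics and cutoffs, and then reassemble the resulting integrals on $M_\infty^{(y_{k;i})}$ via the partition of unity $\{\chi_i^{(y_{k;i})}\}_{i\in\N_0}$ introduced in (\ref{eq:chii}).

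First I would decompose
\[
\langle u_k,W_k\rangle_{H^{1,2}(M)}=\sum_{i\in\N_0}\bigl\langle u_k,\,\chi_{y_{k;i}}\,w\circ\varphi_i\circ e_{y_{k;i}}^{-1}\bigr\rangle_{H^{1,2}(M)},
\]
noting that the $i$-th term has support in $B(y_{k;i},\rho)$. Pulling it back through $e_{y_{k;i}}:\Omega_\rho\to B(y_{k;i},\rho)$ rewrites it as an inner product on $\Omega_\rho$ with respect to the pulled-back metric $g^{(k;i)}=e_{y_{k;i}}^{\ast}g$, the pulled-back cutoff $\chi_{y_{k;i}}\circ e_{y_{k;i}}$, and the factor $w\circ\varphi_i$. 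On the subsequence used to build $M_\infty^{(y_{k;i})}$, boundedness of geometry yields $g^{(k;i)}\to\widetilde g^{(i)}$ in $C^\infty(\overline{\Omega}_\rho)$, formula (\ref{eq:eta}) gives $\chi_{y_{k;i}}\circ e_{y_{k;i}}\to\eta_i$ in $C^\infty(\Omega_\rho)$, and by hypothesis $u_k\circ e_{y_{k;i}}\rightharpoonup w\circ\varphi_i$ in $H^{1,2}(\Omega_\rho)$. Since the integrand depends linearly on $u_k\circ e_{y_{k;i}}$ while every other factor converges smoothly, the $i$-th term converges to $\langle w\circ\varphi_i,\eta_i\,w\circ\varphi_i\rangle_{H^{1,2}(\Omega_\rho,\widetilde g^{(i)})}$, and a change of variables through $\varphi_i$ rewrites this as $\langle w,\chi_i^{(y_{k;i})}w\rangle_{H^{1,2}(M_\infty^{(y_{k;i})})}$ using $\chi_i^{(y_{k;i})}\circ\varphi_i=\eta_i$.

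The main obstacle will be exchanging $\lim_k$ with $\sum_i$. My plan is a truncation argument: given $\varepsilon>0$, estimate (\ref{eq:Wkw1}) together with the uniform local finiteness from Lemma~\ref{Ji} supplies an $N_\varepsilon\in\N$, independent of $k$, with $\bigl\|\sum_{i\ge N_\varepsilon}\chi_{y_{k;i}}\,w\circ\varphi_i\circ e_{y_{k;i}}^{-1}\bigr\|_{H^{1,2}(M)}<\varepsilon$ and an analogous bound for $\sum_{i\ge N_\varepsilon}\chi_i^{(y_{k;i})}w$ on $M_\infty^{(y_{k;i})}$; Cauchy--Schwarz then controls both tails by a multiple of $\varepsilon$ uniformly in $k$. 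Termwise convergence on the finite initial segment $i<N_\varepsilon$ combined with $\varepsilon\to0$ produces
\[
\lim_{k\to\infty}\langle u_k,W_k\rangle_{H^{1,2}(M)}=\sum_{i\in\N_0}\langle w,\chi_i^{(y_{k;i})}w\rangle_{H^{1,2}(M_\infty^{(y_{k;i})})}.
\]
Finally, because $\sum_i\chi_i^{(y_{k;i})}\equiv1$ on $M_\infty^{(y_{k;i})}$ (and consequently $\sum_i\nabla\chi_i^{(y_{k;i})}\equiv0$), expanding $\nabla(\chi_i^{(y_{k;i})}w)$ by the Leibniz rule and summing in $i$ collapses the right-hand side to $\|w\|^2_{H^{1,2}(M_\infty^{(y_{k;i})})}$, which is precisely (\ref{eq:P2}).
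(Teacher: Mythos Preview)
Your argument is correct and follows the same overall strategy as the paper: localize to charts, combine the weak $H^{1,2}$-convergence $u_k\circ e_{y_{k;i}}\rightharpoonup w_i$ with the $C^\infty$-convergence of the pulled-back metrics and cutoffs, and reassemble on $M_\infty^{(y_{k;i})}$. The difference is organizational. The paper inserts the partition of unity $\sum_j\chi_{y_{k;j}}\equiv 1$ into the integral and proves, for each $j$, a ``local identity'' matching the $j$-th piece of $\langle u_k,W_k\rangle$ with the $j$-th piece of $\|w\|^2$; inside each piece the sum defining $W_k$ is finite by Lemma~\ref{Ji}, and the interchange of $\lim_k$ with $\sum_j$ is dismissed with the remark that both coverings are uniformly locally finite. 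You instead split along the natural decomposition $W_k=\sum_i\chi_{y_{k;i}}w_i\circ e_{y_{k;i}}^{-1}$, pass to the limit in each $i$ separately, and make the interchange explicit via the truncation estimate (\ref{eq:Wkw1}) and Cauchy--Schwarz. Your route avoids the second partition of unity, and your treatment of the tail is more transparent than the paper's; the final identification $\sum_i\langle w,\chi_i^{(y_{k;i})}w\rangle=\|w\|^2$ via $\sum_i\chi_i^{(y_{k;i})}\equiv1$ is a clean substitute for the paper's chartwise matching.
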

%%%%%%%%%%%%%%%%%%%%%%%%%%%%%%%%%%%%%%%%%%%%%
\begin{proof}
We use for each $k\in\N$ an enumeration of the covering $\lbrace B(y,\rho)\rbrace_{y\in Y}$ by the points $y_{k;i}$ from the trailing system $\lbrace(y_{k;i})_{k\in\N}\rbrace_{i\in\N_{0}}$. 
Taking into account that, as $k\to\infty$, $u_{k}\circ y_{k;j}\rightharpoonup w_{j}$,
$e_{y_{k;i}}^{-1}\circ e_{y_{k;j}}\to\psi_{ij}$,
and $w_{i}\circ\psi_{ij}=w_{j}$, and using the expression $o^w(1)$ for any sequence of functions that converges weakly to zero in $H^{1,2}(\Omega_\rho)$, we have 
\begin{align}\label{eq:wnk}
\langle u_{k}, W_{k}&\rangle_{H^{1,2}(M)} =   \sum_{j\in \N_0} \int_{B(y_{k;j},\rho)} \chi_{y_{k;j}}(x)  u_k(x)  W_k(x)\dvx + \\
 +& \sum_{j\in \N_0} \int_{B(y_{k;j},\rho)} \chi_{y_{k;j}}(x) g\big(\nabla u_k(x), \nabla W_k(x)\big) \dvx ,  \nonumber
\end{align}
and
\begin{align}\label{eq:wn}
\|w\|&^2_{H^{1,2}(M_{\infty}^{(y_{k;i})})} =  \sum_{j\in \N_0}\int_{B(y_{k;j},\rho)} \chi_{j}^{(y_{k;j})}(x) | w(x)|^2 \dtvnx + \\  
+ & \sum_{j\in \N_0} \int_{B(y_{k;j},\rho)}{\chi}_{j}^{(y_{k;i})}(x) g\big(\nabla w(x), \nabla w(x)\big) \dtvnx ,\nonumber
\end{align}
where the functions ${\chi}_{j}^{(y_{k;i})}$ are defined by the formulae \eqref{eq:eta}-\eqref{eq:chii} relative to the trailing system $\{(y_{k;i})_{k\in\N}\}_{i\in\N_0}$. 

Both coverings are uniformly locally finite, so it is sufficient to prove  local identities 
\begin{align} \label{eq:uWw1}
& \lim_{k\to\infty} \int_{B(y_{k;j},\rho)} \chi_{y_{k;j}}(x)  u_k(x)  W_k(x)\dvx =  \\
&\qquad\qquad \qquad \qquad   
\int_{B(y_{k;j},\rho)} \chi_{j}^{(y_{k;i})}(x) | w(x)|^2\dtvnx \nonumber
\end{align}
and
\begin{align}
& \lim_{k\to\infty}  \int_{B(y_{k;j},\rho)} \chi_{y_{k;j}}(x) g\big(\nabla u_k(x), \nabla W_k(x)\big) \dvx = \label{eq:uWw2} \\
&\qquad\qquad \qquad  \int_{B(y_{k;j},\rho)} \chi_{j}^{(y_{k;i})}(x) g\big(\nabla w(x), \nabla w(x)\big) \dtvnx , \nonumber 
\end{align}
In the first case we have  
\begin{align*}
\int_{\Omega_{\rho}}&\chi_{y_{k;j}}\circ e_{y_{k;j}}(\xi) u_k\circ e_{y_{k;j}}(\xi)\, \times \\
&\qquad\qquad \times 
\sum_{i\in\N_{0}}[\chi_{y_{k;i}} \;w\circ \varphi_{i}\circ e_{y_{k;i}}^{-1})]\circ e_{y_{k;j}}(\xi)
\sqrt{g(\xi)}\;\mathrm{d}\xi=
\\
& \int_{\Omega_{\rho}}\chi_{y_{k;j}}\circ e_{y_{k;j}}(\xi)
(w_{j}+o^{w}(1))(\xi)\,\times \\
&\qquad\qquad  \times \sum_{i\in\N_{0}} \chi_{y_{k;i}}\circ e_{y_{k;j}}\;w_{i}\circ(\psi_{ij}+o^w(1))(\xi)
\sqrt{g(\xi)}\;\mathrm{d}\xi= 
\\
\int_{\Omega_{\rho}} & \chi_{y_{k;j}}\circ e_{y_{k;j}}(\xi)
(w_{j}+o^{w}(1))(\xi)\,
(w_{j}+o^{w}(1))(\xi) \times \\
&\sqrt{(\widetilde{g}+o^{w}(1))(\xi)}\;\mathrm{d}\xi\longrightarrow  \int_{\Omega_\rho} \chi_{j}^{(y_{k;j})}\circ \varphi_{j}(\xi)|w_j|^2\; \sqrt{\widetilde{g}(\xi)}\;\mathrm{d}\xi \, ,
\end{align*}
where the last inequality follows from the identity $\sum_{i\in\N_{0}} \chi_{y_{k;i}}\circ e_{y_{k;j}}=1$ on $\Omega_\rho$, cf. Lemma \ref{Ji}. This proves \eqref{eq:uWw1}. 

To prove \eqref{eq:uWw2} we first note that 
\begin{align*}  
&\sum_{\nu,\mu=1}^N g^{\nu,\mu}(\xi) \partial_\nu(u_k\circ e_{y_{k;j}})(\xi)\partial_\mu (W_k\circ e_{y_{k;j}})(\xi) = \\
& = \sum_{\nu,\mu=1}^N g^{\nu,\mu}(\xi) \partial_\nu(u_k\circ e_{y_{k;j}})(\xi)  \times \\
&\qquad\qquad\times \partial_\mu\Big(\sum_{i\in\N_{0}}[\chi_{y_{k;i}}
\;w\circ \varphi_{i}\circ e_{y_{k;i}}^{-1})]\circ e_{y_{k;j}}\Big)(\xi)= \\
&\sum_{\nu,\mu=1}^N g^{\nu,\mu}(\xi) \partial_\nu\Big((w_{j}+o^{w}(1))\circ e_{y_{k;j}}\Big)(\xi) \times \\
&\qquad\qquad\times \partial_\mu\Big(
\chi_{y_{k;i}}\circ e_{y_{k;j}}(\xi)\;
w_{i}\circ(\psi_{ij}+o^w(1))\Big)(\xi) = \\
&\sum_{\nu,\mu=1}^N g^{\nu,\mu}(\xi) \partial_\nu\Big((w_{j}+o^{w}(1))\circ e_{y_{k;j}}\Big)(\xi) 
%&\qquad\qquad\times 
\partial_\mu\Big(w_{j}+o(1))\Big)(\xi). 
\end{align*}
 In consequence 
 \begin{align*}
\int_{\Omega_{\rho}}\chi_{y_{k;j}}\circ e_{y_{k;j}}(\xi) \, 
\sum_{\nu,\mu=1}^N g^{\nu,\mu}(\xi) \partial_\nu(u_k\circ e_{y_{k;j}})(\xi)\partial_\mu (W_k\circ e_{y_{k;j}})(\xi) \sqrt{g(\xi)}\;\mathrm{d}\xi= 
\end{align*}
 \begin{align*}
 %\\
	= & \int_{\Omega_{\rho}}\chi_{y_{k;j}}\circ e_{y_{k;j}}(\xi) \,
	\sum_{\nu,\mu=1}^N g^{\nu,\mu}(\xi) \partial_\nu\Big((w_{j}+o^{w}(1))\circ e_{y_{k;j}}\Big)(\xi) 
	\\
&\qquad\qquad \partial_\mu\Big((w_{j}+o^{w}(1))\circ e_{y_{k;j}}\Big)(\xi)	\sqrt{\widetilde{g}(\xi)+o(1)}\;\mathrm{d}\xi \longrightarrow 
\\
& 	\int_{\Omega_{\rho}}\chi_{j}^{(y_{k;i})}\circ \varphi_j(\xi) \sum_{\nu,\mu=1}^N \widetilde{g}^{\nu,\mu}(\xi) \partial_\nu w\circ \varphi_j(\xi)\partial_\mu w\circ \varphi_j(\xi) \sqrt{\widetilde{g}(\xi)}\;\mathrm{d}\xi 
 	\end{align*}
Combining the last  calculations with \eqref{eq:wnk}-\eqref{eq:uWw2} we arrive at \eqref{eq:P2}.
\end{proof}
%%%%%%%%%%%%%%%%%%%%%%%%%%%%%%%%%%%%%%%%%%%%%%%%%
%%%%%%%%%%%%%%%%%%%%%%%%%%%%%%%%%%%%%%%%%%%%%%%%
%%%%%%%%%%%%%%%%%%%%%%%%%%%%%%%%%%%%%%%%%%%%%%%%%%%%%
\begin{lem}
\label{lem:length} Let $w$ be a profile of the sequence $u_k$, given by Proposition~\ref{prop:W} relative to a trailing system $\{(y_{k;i})_{k\in\N}\}_{i\in\N_0}$, and let $W_k$ be the associated concentration sequence. The following holds true:
\begin{equation}
\lim_{k\to\infty}\|W_{k}\|^2_{H^{1,2}(M)}=\|w\|_{H^{1,2}(M_{\infty}^{(y_{i;k})})}^{2}.\label{eq:P3}
\end{equation}
\end{lem}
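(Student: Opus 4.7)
The plan is to mirror the structure of Lemma~\ref{lem:projection}, but with $u_k$ replaced by $W_k$ in the pairing $\langle u_k, W_k\rangle_{H^{1,2}(M)}$. The one new ingredient we need is that, for every fixed index $j\in\N_0$, the sequence of local representatives $W_k\circ e_{y_{k;j}}$ converges \emph{strongly} in $H^{1,2}(\Omega_\rho)$ to the local profile $w_j=w\circ\varphi_j$. Since $W_k$ is bounded in $H^{1,2}(M)$ by Lemma~\ref{lem:decoupling}, this strong local convergence then allows us to pass to the limit in each summand of the partition-of-unity expansion of $\|W_k\|_{H^{1,2}(M)}^2$.

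First I would establish the strong local convergence. Fix $j\in\N_0$ and write, using \eqref{eq:Wkdef},
\begin{equation*}
W_k\circ e_{y_{k;j}}=\sum_{i\in J_j}\bigl(\chi_{y_{k;i}}\circ e_{y_{k;j}}\bigr)\cdot\bigl(w\circ\varphi_i\circ e_{y_{k;i}}^{-1}\circ e_{y_{k;j}}\bigr),
\end{equation*}
a finite sum uniformly in $k$ by Lemma~\ref{Ji}. The coordinate changes $e_{y_{k;i}}^{-1}\circ e_{y_{k;j}}$ converge in $C^\infty(\bar\Omega_{2\rho})$ to $\psi_{ij}$, the cut-offs $\chi_{y_{k;i}}\circ e_{y_{k;j}}$ converge in $C^\infty$ to $\eta_i\circ\psi_{ij}$ by \eqref{eq:eta}, and composition with $C^1$-convergent diffeomorphisms with uniformly bounded derivatives is continuous on $H^{1,2}$. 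Consequently each summand converges in $H^{1,2}(\Omega_\rho)$ to $(\eta_i\circ\psi_{ij})\cdot(w\circ\varphi_i\circ\psi_{ij})$. Using $\varphi_i\circ\psi_{ij}=\varphi_j$ on $\Omega_{ji}$, this simplifies to $(\eta_i\circ\psi_{ij})\, w_j$, and summing over $i\in J_j$ together with $\sum_{i}\eta_i\circ\psi_{ij}=1$ on $\Omega_\rho$ gives the required strong limit $W_k\circ e_{y_{k;j}}\to w_j$ in $H^{1,2}(\Omega_\rho)$.

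Next, I would expand $\|W_k\|_{H^{1,2}(M)}^2$ by the partition of unity $\{\chi_{y_{k;j}}\}_{j\in\N_0}$ and rewrite each $j$-th summand in the normal coordinates $e_{y_{k;j}}$, in exact parallel with equations \eqref{eq:wnk}--\eqref{eq:uWw2}. The local contribution is
\begin{equation*}
\int_{\Omega_\rho}\chi_{y_{k;j}}\circ e_{y_{k;j}}\,\Bigl(|W_k\circ e_{y_{k;j}}|^2+g^{\nu\mu}\,\partial_\nu(W_k\circ e_{y_{k;j}})\,\partial_\mu(W_k\circ e_{y_{k;j}})\Bigr)\sqrt{g}\,d\xi.
\end{equation*}
By the strong $H^{1,2}$-convergence just established, the $C^\infty$-convergence $g\to\widetilde g^{(j)}$ (see \eqref{eq:pre-metric}), and the $C^\infty$-convergence $\chi_{y_{k;j}}\circ e_{y_{k;j}}\to\eta_j$, this integral converges to
\begin{equation*}
\int_{\Omega_\rho}\eta_j\,\Bigl(|w_j|^2+\widetilde g^{(j)\nu\mu}\,\partial_\nu w_j\,\partial_\mu w_j\Bigr)\sqrt{\widetilde g^{(j)}}\,d\xi,
\end{equation*}
which, via $\varphi_j$, is exactly the $j$-th contribution to $\|w\|_{H^{1,2}(M_\infty^{(y_{k;i})})}^2$ computed with the partition of unity $\{\chi_j^{(y_{k;i})}\}$ defined in \eqref{eq:chii}.

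The main obstacle, as in Lemma~\ref{lem:projection}, is to justify interchanging the limit in $k$ with the sum in $j$; this I would handle by an $\varepsilon/2$ tail argument, using the uniform bound \eqref{eq:Wkw1} from Lemma~\ref{lem:decoupling} as a summable dominating envelope. Indeed, \eqref{eq:Wkw1} shows that for every $\varepsilon>0$ there is $N_\varepsilon$, independent of $k$, such that $\sum_{j\ge N_\varepsilon}\|\chi_{y_{k;j}}\circ e_{y_{k;j}}\cdot w\circ\varphi_j\|_{H^{1,2}(\R^N)}^2<\varepsilon$, while the $j<N_\varepsilon$ portion of the sum involves only finitely many terms to which the pointwise convergence just proved applies. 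The corresponding tail on $M_\infty^{(y_{k;i})}$ is controlled by the same constant times $\|w\|_{H^{1,2}(M_\infty^{(y_{k;i})})}^2$, so both tails can be made arbitrarily small, yielding \eqref{eq:P3}.
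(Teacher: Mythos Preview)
Your proof is correct and follows essentially the same route as the paper: expand $\|W_k\|_{H^{1,2}(M)}^2$ via the partition of unity $\{\chi_{y_{k;j}}\}$, pass to the limit in each local chart using the $C^\infty$-convergence of transition maps, metric coefficients and cut-offs, and identify the result with the corresponding local pieces of $\|w\|_{H^{1,2}(M_\infty^{(y_{k;i})})}^2$. Your explicit isolation of the \emph{strong} convergence $W_k\circ e_{y_{k;j}}\to w_j$ in $H^{1,2}(\Omega_\rho)$ is in fact an improvement on the paper's presentation, where the same fact is hidden behind the notation $o^w(1)$ (introduced in Lemma~\ref{lem:projection} for \emph{weak} limits), even though strong convergence is precisely what is needed to pass to the limit in the quadratic gradient term.
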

%%%%%%%%%%%%%%%%%%%%%%%%%
\begin{proof}
We can proceed in the similar way as in the proof of  Lemma \ref{lem:projection}. Once more we can reduce the argumentation  to the local identities using \eqref{eq:wn} and 

\begin{align}
&\|W_{k}\|^2_{H^{1,2}(M)} =  \sum_{j\in \N_0} \int_{B(y_{k;j},\rho)} \chi_{y_{k;j}}(x)  |W_k(x)|^2\dvx + \\
 &\qquad\qquad + \sum_{j\in \N_0} \int_{B(y_{k;j},\rho)} \chi_{y_{k;j}}(x) g\big(\nabla W_k(x) \nabla W_k(x)\big) \dvx ,  \nonumber
\end{align}
We have 
\begin{align*}
	&\int_{\Omega_{\rho}}\chi_{y_{k;j}}\circ e_{y_{k;j}}(\xi) \, 
	\sum_{\nu,\mu=1}^N g^{\nu,\mu}(\xi) \partial_\nu \big(W_k\circ e_{y_{k;j}}\big)(\xi)\partial_\mu \big(W_k\circ e_{y_{k;j}}\big)(\xi) \sqrt{g(\xi)}\;\mathrm{d}\xi= 
	\\
& \qquad 	\int_{\Omega_{\rho}}\chi_{y_{k;j}}\circ e_{y_{k;j}}(\xi) \,
	\sum_{\nu,\mu=1}^N g^{\nu,\mu}(\xi) \partial_\nu\Big((w_{j}+o^{w}(1))\Big)(\xi)\times \\ 
&	\qquad \qquad\qquad\qquad\times \partial_\mu\Big((w_{j}+o^{w}(1))\Big) \sqrt{\widetilde{g}(\xi)+o(1)}\;\mathrm{d}\xi \longrightarrow 
	\\
&\qquad	\int_{\Omega_{\rho}}\chi_{j}^{(y_{k;i})}\circ \varphi_j(\xi) \sum_{\nu,\mu=1}^N \widetilde{g}^{\nu,\mu}(\xi) \partial_\nu \big(w\circ \varphi_j\big)(\xi)\partial_\mu \big(w\circ \varphi_j\big)(\xi) \sqrt{\widetilde{g}(\xi)}\;\mathrm{d}\xi 
\end{align*}

Also as above,
\begin{align*}
%\int_M |W_{k}|^2\mathrm{d}\mu  =  
%\\
%\sum_{j\in\N_{0}}
\int_{\Omega_{\rho}}\chi_{y_{k;j}}\circ e_{y_{k;j}}(\xi) \; 
\big|\sum_{i\in\N_{0}}[\chi_{y_{k;i}}
\;w\circ \varphi_{i}\circ e_{y_{k;i}}^{-1})]\circ e_{y_{k;j}}(\xi)\big|^2
\sqrt{g}(\xi)\;\mathrm{d}\xi =
\\
%\sum_{j\in\N_{0}}
\int_{\Omega_{\rho}}\chi_{y_{k;j}}\circ e_{y_{k;j}}(\xi)
\big|\sum_{i\in\N_{0}} 
\chi_{y_{k;i}}\circ e_{y_{k;j}}(\xi)\;
w_{i}\circ(\psi_{ij}+o^w(1))(\xi)\big|\,
\sqrt{g}(\xi)\;\mathrm{d}\xi=
\\
%\sum_{j\in\N_{0}}
\int_{\Omega_{\rho}}\chi_{y_{k;j}}\circ e_{y_{k;j}}(\xi)
|(w_{j}+o^w(1))(\xi)|^2\,
\sqrt{(\widetilde{g}+o^w(1))(\xi)}\;\mathrm{d}\xi\quad\longrightarrow 
\\
\int_{\Omega_{\rho}}\chi_j^{(y_{k;i})}\circ \varphi_{j}(\xi)
|(w_{j}(\xi)|^2\,
\sqrt{\widetilde{g}}(\xi)\;\mathrm{d}\xi
%\int_{M_\infty^{(n)}} |w^{(n)}|^2\mathrm{d}\mu_{\infty}^{(n)}.
\end{align*}
%Combining last two relations we have \eqref{eq:P3}.
\end{proof}
%%%%%%%%%%%%%%%%%%%%%%%%%%%%%%%%%%%%%%%%%%%%%%%%%%%%%%%%%%%%%
%%%%%%%%%%%%%%%%%%%%%%%%%%%%%%%%%%%%%%%%%%%%%%%%%%%%%%%%%%%%
%%%%%%%%%%%%%%%%%%%%%%%%%%%%%%%%%%%%%%%%%%%%%%%%%%%%%%%%%%
Below we consider a countable family of trailing systems $\{(y_{k;j}^{(n)})_{k\in\N}\}_{i\in\N_0}$, $n\in\N$, 
and will abbreviate the notation of the associated manifolds at infinity,
$M^{(y_{k;j}^{(n)})}_{\infty}$, as $M^{(n)}_\infty$. This convention will also extend to all other objects generated by trailing systems $\{(y_{k;i}^{(n)})_{k\in\N}\}_{i\in\N_0}$, but not to objects  
 indexed by points in $Y$, such as $\chi_{y_{k;i}^{(n)}}$.

\begin{lem}
\label{lem:Plancherel}Assume that $u_{k}\rightharpoonup0$. Assume that trailing systems $\{(y_{k;i}^{(n)})_{k\in\N}\}_{i\in\N_0}$ of discrete sequences $(y_k^{(n)})_{k\in\N}$, $n\in\N$,  generate local profiles $\lbrace w_{i}^{(n)}\rbrace_{i\in\N_{0}}$,
such that $d(y_{k}^{(n)},y_{k}^{(\ell)})\to\infty$ when $n\neq\ell$.
Then 
\begin{equation}
\sum_{n=1}^{m}\|w^{(n)}\|_{H^{1,2}(M_{\infty}^{(n)})}^{2}\le\limsup\|u_{k}\|_{H^{1,2}(M)}^{2}.\label{eq:P0}
\end{equation}
\end{lem}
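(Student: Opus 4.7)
The plan is to deduce the inequality by exploiting non-negativity of $\|u_k - V_k\|_{H^{1,2}(M)}^2$ where $V_k \eqdef \sum_{n=1}^{m} W_k^{(n)}$ is the sum of the elementary concentrations associated to the profiles $w^{(n)}$ via Proposition~\ref{prop:W}. Expanding the square and rearranging gives, for every $k$,
\[
\|u_k\|_{H^{1,2}(M)}^2 \;\ge\; 2\,\langle u_k, V_k\rangle_{H^{1,2}(M)} \,-\, \|V_k\|_{H^{1,2}(M)}^2.
\]
Taking $\limsup$, the required bound $\sum_{n=1}^m \|w^{(n)}\|^2_{H^{1,2}(M_{\infty}^{(n)})} \le \limsup \|u_k\|_{H^{1,2}(M)}^2$ will follow once both right-hand terms are identified in the limit.

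For the linear term, by linearity in the second slot and Lemma~\ref{lem:projection} applied once to each sequence $(y_k^{(n)})$, we immediately obtain $\langle u_k, V_k\rangle_{H^{1,2}(M)} \to \sum_{n=1}^m \|w^{(n)}\|^2_{H^{1,2}(M_{\infty}^{(n)})}$. For the quadratic term, expand
\[
\|V_k\|_{H^{1,2}(M)}^2 \;=\; \sum_{n=1}^m \|W_k^{(n)}\|_{H^{1,2}(M)}^2 \;+\; 2\sum_{1\le n<\ell\le m}\langle W_k^{(n)}, W_k^{(\ell)}\rangle_{H^{1,2}(M)}.
\]
The diagonal terms are handled by Lemma~\ref{lem:length}, so the whole argument reduces to proving that the finitely many cross terms $\langle W_k^{(n)}, W_k^{(\ell)}\rangle_{H^{1,2}(M)}$ with $n\ne\ell$ tend to zero.

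The cross-term decoupling is the main obstacle, since in general $w^{(n)}\in H^{1,2}(M_{\infty}^{(n)})$ is not compactly supported, so $W_k^{(n)}$ spreads over the entire trailing system and not merely over a bounded neighborhood of $y_k^{(n)}$. My plan is to truncate: given $\varepsilon>0$, approximate each $w^{(n)}$ in $H^{1,2}(M_{\infty}^{(n)})$ within $\varepsilon$ by a function $\tilde w^{(n)}$ supported in finitely many charts $\{\varphi_i^{(n)}(\Omega_\rho)\}_{i\in I_n}$ with $I_n\subset\N_0$ finite. By bounded geometry, Bishop--Gromov and Definition~\ref{def:trailing}, the points $\{y^{(n)}_{k;i}\}_{i\in I_n}$ lie in a ball $B(y_k^{(n)}, R_n)$ with $R_n$ depending only on $|I_n|$ and $\rho$ (hence independent of $k$). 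Consequently the corresponding truncated concentration $\widetilde W_k^{(n)}$ is supported in $B(y_k^{(n)}, R_n + \rho)$, and similarly for $\widetilde W_k^{(\ell)}$. Because $d(y_k^{(n)}, y_k^{(\ell)})\to\infty$, these supports are disjoint for $k$ large, giving $\langle \widetilde W_k^{(n)}, \widetilde W_k^{(\ell)}\rangle = 0$ eventually. The continuity bound \eqref{eq:Wkw} of Lemma~\ref{lem:decoupling} yields $\|W_k^{(n)} - \widetilde W_k^{(n)}\|_{H^{1,2}(M)} \le C\varepsilon$ uniformly in $k$, so by Cauchy--Schwarz (together with the uniform boundedness of $\|W_k^{(n)}\|_{H^{1,2}(M)}$, which also follows from \eqref{eq:Wkw}) the original cross term differs from the truncated one by at most $C'\varepsilon$. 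Since $\varepsilon$ was arbitrary, $\limsup_k |\langle W_k^{(n)}, W_k^{(\ell)}\rangle_{H^{1,2}(M)}| = 0$, which combined with the first two paragraphs proves \eqref{eq:P0}.
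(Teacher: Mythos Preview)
Your proof is correct and follows the same approach as the paper: expand $\|u_k - \sum_{n=1}^m W_k^{(n)}\|_{H^{1,2}(M)}^2 \ge 0$, identify the diagonal terms via Lemmas~\ref{lem:projection} and~\ref{lem:length}, and kill the cross terms by truncating each $W_k^{(n)}$ to a piece supported in a ball of fixed radius about $y_k^{(n)}$, so that the supports separate as $k\to\infty$. The only cosmetic difference is in the truncation step---the paper cuts the index set of the sum defining $W_k^{(n)}$ at a finite $N_\epsilon$ and controls the tail via the estimate behind Lemma~\ref{lem:Plancherel0}, whereas you approximate $w^{(n)}$ in $H^{1,2}(M_\infty^{(n)})$ by a compactly supported function and invoke \eqref{eq:Wkw}; one small slip: the radius $R_n$ should depend on $\max I_n$ rather than $|I_n|$.
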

%%%%%%%%%%%%%%%%%%%%%%%%%%%%%%%%%%%%%%%%%%%%%%%%%%%%%%%%%%
\begin{proof}
Consider for each $n=1,\dots,m$ the elementary concentrations $W_{k}^{(n)}=\sum_{i\in\N_{0}}\chi_{y_{k;i}^{(n)}}w_{i}^{(n)}\circ e_{y_{k;i}^{(n)}}^{-1}$,
$w_{i}^{(n)}=w^{(n)}\circ\varphi_{i}^{(n)}$, where $\lbrace\varphi_{i},\Omega_{\rho}\rbrace_{i\in\N_{0}}$
is the atlas of the manifold at infinity $M_{\infty}^{{n}}\eqdef M_{\infty}^{(y_{k;i}^{(n)})}$,
and let us expand by bilinearity the trivial inequality
\[
\left\Vert u_{k}-\sum_{n=1}^{m}W_{k}^{(n)}\right\Vert _{H^{1,2}(M)}^{2}\ge0.
\]
For convenience, the subscript in the Sobolev norm will be omitted
for the rest of this proof. We have then

\begin{equation}
2\sum_{n=1}^{m}\langle u_{k},W_{k}^{(n)}\rangle-\sum_{n=1}^{m}\|W_{k}^{(n)}\|^{2}\le\|u_{k}\|^{2}+\sum_{n\neq\ell}\langle W_{k}^{(n)},W_{k}^{(\ell)}\rangle.\label{eq:P1-1}
\end{equation}
\begin{comment}
Without loss of generality, taking into account Lemma \ref{lem:Plancherel0},
we may assume that there exists a $I\in\N_{0}$ such that $w_{i}^{(n)}=0$
for all $n=1,\dots,m$ and all $i>I$. WE MAY NOT!
\end{comment}
Applying Lemmas \ref{lem:projection} and \ref{lem:length} we have
\begin{equation}
\sum_{n=1}^{m}\|w^{(n)}\|_{H^{1,2}(M_{\infty}^{(n)})}^{2}\le\|u_{k}\|^{2}+\sum_{n\neq\ell}\langle W_{k}^{(n)},W_{k}^{(\ell)}\rangle+o(1).\label{eq:P1}
\end{equation}
In order to prove the lemma it suffices therefore to show that $\langle W_{k}^{(n)},W_{k}^{(\ell)}\rangle\to0$
whenever $n\neq\ell$. 

Since $d(y_{k}^{(n)},y_{k}^{(\ell)})\to\infty$, we also have
$d(y_{k;i}^{(n)},y_{k;j}^{(\ell)})\to\infty$ for any $i,j\in\N_{0}$.
Let $\epsilon>0$ and let $N_{\epsilon}\in\N$ be such that, in view
of Lemma \ref{lem:Plancherel0}, 
\begin{align}\label{eq:chwost}
\sum_{i\ge N_{\epsilon}} \, &\,\int_{\Omega\rho}\chi_{i}^{(n)}({\xi})
\sum_{\nu,\mu=1}^N  g^{\nu\mu}(\xi)\partial_n(w_{i}^{(n)})(\xi)\partial_\mu(w_{i}^{(n)})(\xi))+\\
&\qquad\qquad  + |w_{i}^{(n)}(\xi)|^{2}]\sqrt{g(\xi)}\mathrm{d}\xi \le\epsilon,\; n=1,\dots,m.\nonumber
\end{align}
Let $W_{k}^{(n)}=W_{k}^{(n)'}+W_{k}^{(n)''}$ where 
\[
W_{k}^{(n)'}=\sum_{i<N_{\epsilon}}(\chi_{y_{k;i}^{(n)}}w_{i}^{(n)}\circ e_{y_{k;i}^{(n)}}^{-1})\mbox{ and }W_{k}^{(n)''}=\sum_{i\ge N_{\epsilon}}(\chi_{y_{k;i}^{(n)}}w_{i}^{(n)}\circ e_{y_{k;i}^{(n)}}^{-1})
\]
 and note that for all $k$ sufficiently large, $W_{k}^{(n)'}$ and
$W_{k}^{(\ell)'}$ have disjoint supports. Thus 
\begin{equation}
|\langle W_{k}^{(n)},W_{k}^{(\ell)}\rangle|\le 2S_{k}T_{k} + T_{k}^{2},\label{eq:ST}
\end{equation}
where $S_{k}=\max_{n=1,\dots m}\|W_{k}^{(n)'}\|$ and $T_{k}=\max_{n=1,\dots m}\|W_{k}^{(n)''}\|$. The estimate for $S_{k}$ is readily provided by repeating verbally
the argument of Lemma \ref{lem:length}, which gives 
\[
S_{k}^{2}\le\max_{n=1,\dots,m}\|w^{(n)}\|_{H^{1,2}(M_{\infty}^{(n)})}^{2}+o(1),
\]
so $S_{k}$ is bounded by $C \|u_{k}\|+o(1)$ due to Lemma \ref{lem:Plancherel0},
while a similar adaptation of Lemma \ref{lem:length} to summation for
$i\ge N_{\epsilon}$ yields that $T_{k}^{2}$ is bounded, up to vanishing
terms, by the left hand side of (\ref{eq:chwost}), and thus $T_{k}\le\sqrt{\epsilon}+o(1)$.
Thus from (\ref{eq:ST}) we have 
\[
|\langle W_{k}^{(n)},W_{k}^{(\ell)}\rangle|\le C\sqrt{\epsilon}(\|u_{k}\|+\sqrt{\epsilon}+o(1)),
\]
which implies, in turn, that $\limsup_{k\to\infty}|\langle W_{k}^{(n)},W_{k}^{(\ell)}\rangle|\le C\sqrt{\epsilon}$,
and since $\epsilon$ is arbitrary, we have $\langle W_{k}^{(n)},W_{k}^{(\ell)}\rangle\to0$
for $n\neq\ell$, which completes the proof.
\[
\]
\end{proof}
%%%%%%%%%%%%%%%%%%%%%%%%%%%%%%%%%%%%%%%%%%%%%%%%%%%%%%%%
%%%%%%%%%%%%%%%%%%%%%%%%%%%%%%%%%%%%%%%%%%%%%%%%%%%%%%%%%%%%%%%%%%%%%%%%%%%%
Before we begin the proof of Theorem \ref{thm:main}, we introduce
the following technical definition.
\begin{defn}
\label{def:modulus}Let $(u_{k})$ be a bounded sequence in $H^{1,2}(M)$.
Let $(y_{k}^{(\ell)})$, $\ell=1,\dots  ,m$, $m\in\N$, %$\dots(y_{k}^{m})$ 
be discrete sequences of points
in $Y$, satisfying $d(y_{k}^{(n)},y_{k}^{(\ell)})\to\infty$ for
$n\neq\ell$, and generating global profiles $w_{1},\dots,w_{m}$
of a renamed subsequence of $(u_{k})$ in respective Sobolev spaces
$H^{1,2}(M_{\infty}^{(\ell)})$. %, $\dots$, $H^{1,2}(M_{\infty}^{(m)})$. 
A modulus
$\nu^{(u_{k})}((y_{k}^{(1)}),\dots,(y_{k}^{m}))$ of this subsequence
is the supremum of the set of values $\|w\|_{H^{1,2}(M_{\infty}^{(y_{k;i})})}^{2}$
of all global profiles $w$ of the renamed subsequence $(u_{k})$
generated by a trailing system $\{(y_{i;k})_{k\in\N}\}_{i\in\N_0}$ in $Y$ satisfying $d(y_{k;0},y_{k}^{(\ell)})\to\infty$, $\ell=1,\dots,m$.
If such set is empty, we set $\nu^{(u_{k})}((y_{k}^{(1)}),\dots,(y_{k}^{(m)}))\eqdef 0$. For $m=0$,  $\nu^{(u_k)}(\emptyset)$ is defined as the corresponding unconstrained supremum.
\end{defn}

%%%%%%%%%%%%%%%%%%%%%%%%%%%%%%%%%%%%%%%%%%%%%%%%%%%%%%%%%%%%%%%%%%%%%%%%%%%%%%%%%%
%%%%%%%%%%%%%%%%%%%%%%%%%%%%%%%%%%%%%%%%%%%%%%%%%%%%%%%%%%%%%%%%%%%%%
%%%%%%%%%%%%%%%%%%%%%%%%%%%%%%%%%%%%%%%%%%%%%%%%%%%%%%%%%%%%%%

\section{Proof of Theorem \ref{thm:main}.}

\emph{Step 1}. It suffices to prove Theorem \ref{thm:main} for sequences
that weakly converge to zero. Indeed, assume that the theorem is true
in this case. A general bounded sequence $(u_{k})$ in $H^{1,2}(M)$,
it has a renamed subsequence weakly convergent to some $w^{(0)}$
in $H^{1,2}(M)$. Consider then conclusions of the theorem for the
sequence $(u_{k}-w^{(0)})$ . Since for any discrete sequence $(y_{k})$
in $Y$, $w^{(0)}\circ e_{y_{k}}\rightharpoonup0$ in $H^{1,2}(\Omega_{\rho})$ by Lemma~\ref{lem:Plancherel0},
sequences $(u_{k})$ and $(u_{k}-w^{(0)})$ have identical local profiles
under the same trailing systems $\{(y_{i;k}^{(n)})_{k\in\N}\}_{i\in\N_0}$, identical manifolds at
infinity and identical concentration terms $W_{k}^{(n)}$, which yields
\eqref{eq:PD}. Relation (\ref{eq:Plancherel}) follows from the elementary identity for Hilbert space norms,
\[
\|u_{k}\|^{2}-\|w^{(0)}\|^{2}-\|u_{k}-w^{(0)}\|^{2}\to0,
\]
and (\ref{eq:Plancherel}) for the sequence $ $$(u_{k}-w^{(0)})$.
Relation (\ref{eq:newBL}) follows from Brezis-Lieb Lemma (\cite{BL}),
which gives, in our settings, 
\[
\int_{M}|u_{k}|^{p}\mathrm dv_g-\int_{M}|w^{(0)}|^{p}\mathrm dv_g-\int_{M}|u_{k}-w^{(0)}|^{p}\mathrm dv_g\to0,
\]
combined with (\ref{eq:newBL}) for the sequence $(u_{k}-w^{(0)})$. 

From now on we assume that $u_{k}\rightharpoonup0$.

 \emph{Step 2}. Let us give an iterative construction of sequences $(v_{k}^{(n)})_{k\in\N}$
in $H^{1,2}(M)$, $n\in\N_{0}$. We set $v_k^{(0)}=u_k$ and choose 
$(y_{k}^{(1)})_{k\in\N}$ so that $\|w^{(1)}\|_{H^{1,2}(M_\infty^{(1)})}\ge \frac12\nu^{(u_k)}(\emptyset)$. 

Assume that we have defined sequences
$(v_{k}^{(0)})_{k\in\N}$,...,$(v_{k}^{(m)})_{k\in\N}$, with the
following properties:
\begin{quotation}
There exists, for a given $m$, a renamed subsequence of $(u_{k}),$
sequences $(y_{k}^{(1)})_{k\in\N},\dots,(y_{k}^{(m)})_{k\in\N}$ of
points in $Y$ such that $d(y_{k}^{(\ell)},y_{k}^{(n)})\to\infty$
whenever $\ell\neq n$, with trailing systems $\left\lbrace (y_{k;i}^{(n)})_{k\in\N}\right\rbrace _{i\in\N_{0}}$,
defining, on a subsequence, for each respective $n=1,\dots,m$, an
array of local profiles$ $ $\lbrace w_{i}^{(n)}\rbrace_{i\in\N_{0}}$
of  (the $m$th extraction of) $(u_{k})$, and, consequently,
a Riemannian manifold at infinity $M_{\infty}^{(n)}$ and a global
profile $w^{(n)}\in H^{1,2}(M_{\infty}^{(n)})$. Assume, furthermore,
that $\|w^{(n)}\|_{H^{1,2}(M_{\infty}^{(n)})}^{2}\ge\frac{1}{2}\nu^{(u_{k})}((y_{k}^{(1)}),\dots,(y_{k}^{(n-1)}))$,
$n=2,\dots,m$ (cf. Definition \ref{def:modulus}). Let $(W_{k}^{(n)})_{k\in\N}$,
$n=1,\dots,m$, be corresponding elementary concentrations, and define,
with the convention that the sum over an empty set equals zero, 
\[
v_{k}^{(n)}\eqdef u_{k}-\sum_{\ell=1}^{n}W_{k}^{(\ell)},\ n=1,\dots m.
\]

\end{quotation}
Under the above assumptions we construct now a sequence $v_{k}^{(m+1)}$
that will also satisfy these assumptions. Consider all sequences $(y_{k})$ of points in $Y$ such that $d(y_{k},y_{k}^{(\ell)})\to\infty$ for all $\ell=1,\dots,m$. We have three complementary cases:
\begin{itemize}
	\item[case 1:]  for any such sequence one has $v_{k}^{(m)}\circ e_{y_{k}}\rightharpoonup0$
	in $H^{1,2}(\Omega_{\rho})$ on a renamed subsequence;
	\item[case2:]  there
	exists a bounded sequence $(y_{k})$ of points in $Y$ (so that $d(y_{k},y_{k}^{(\ell)})\to\infty$
	for all $\ell=1,\dots,m$) such that, on a renamed subsequence, $v_{k}^{(m)}\circ e_{y_{k}}\rightharpoonup w\neq0$;
	\item[case 3:] there exists a discrete sequence $(y_{k})$ of points
	in $Y$ such that $d(y_{k},y_{k}^{(\ell)})\to\infty$ for all $\ell=1,\dots,m$,
	and $v_{k}^{(m)}\circ e_{y_{k}}\rightharpoonup w\neq0$.
\end{itemize}

Case 2 is in fact vacuous. Indeed, in this case $(y_{k})$ would have
a constant subsequence with some value $z$ and $u_{k}\circ e_{z}\rightharpoonup w\neq0$,
which contradicts the assumption $u_{k}\rightharpoonup0$.

Consider case 1. We prove that in that case $v_{k}^{(m)}\circ e_{z_{k}}\rightharpoonup0$ for any sequence $(z_{k})$ in $Y$. By assumption we know that it is true if $d(z_{k},y_{k}^{(\ell)})\to\infty$ for all $\ell=1,\dots, m$. So let us assume that  on a renamed subsequence, $d(z_{k},y_{k}^{(\ell)})$ is bounded for some $\ell\in\lbrace1,\dots m\rbrace$. Then by the definition of the trailing system there exists $i\in\N_{0}$ such that  $z_{k}=y_{k;i}^{(\ell)}$  on a renamed subsequence. So if $u_{k}\circ e_{z_{k}}\rightharpoonup w\neq0$ then $w$  coincides with the local profile $w_{i}^{(\ell)}$. Moreover $d(z_{k},y_{k}^{(n)})\to\infty$ if $1\le n \le m$ and $n\not=\ell$. So by   Lemma \ref{lem:decoupling}, $W^{(n)}_{k}\circ e_{z_{k}}\rightharpoonup 0$ if $n\not= \ell$ and $W^{(\ell)}_{k}\circ e_{z_{k}}\rightharpoonup w_i$ . In consequence   $v_{k}^{(m)}\circ e_{z_{k}}\rightharpoonup0$   
Now by Lemma \ref{lem:spotlight}, $v_{k}^{(m)}\to0$ in $L^{p}(M)$, which means that the asymptotic relation (\ref{eq:PD}) is proved with a finite sum of elementary concentrations and we can take $v_{k}^{(m+1)}=0$.  
%%%%%%%
%Consider case 1. By Lemma \ref{lem:decoupling}, $u_{k}\circ e_{y_{k}}\rightharpoonup0$
%in $H^{1,2}(\Omega_{\rho})$ since in that case $v_{k}^{(m)}\circ e_{y_{k}}\rightharpoonup0$.   Thus, if there is a sequence $(z_{k})$ in $Y$ such that, %on a renamed subsequence, $u_{k}\circ e_{z_{k}}\rightharpoonup w\neq0$,
%then, on a renamed subsequence, $d(z_{k},y_{k}^{(\ell)})$ is bounded
%for some $\ell\in\lbrace1,\dots n\rbrace$. By the definition of the trailing system there exists $i\in\N_{0}$ such that  $z_{k}=y_{k;i}^{(\ell)}$  on a %renamed subsequence. This means that $w$ coincides
%with the local profile $w_{i}^{(\ell)}$, and therefore, $v_{k}^{(m)}\circ e_{y_{k}}\rightharpoonup0$
%for any sequence $(y_{k})$ in $Y$. By Lemma \ref{lem:spotlight},
%$v_{k}^{(m)}\to0$ in $L^{p}(M)$, which means that the asymptotic
%relation (\ref{eq:PD}) is proved with a finite sum of elementary
%concentrations. 

Consider now case 3.  Now the modulus  $\nu^{(u_{k})}((y_{k}^{(1)}),\dots,(y_{k}^{m}))>0$ is positive,  cf. Definition \ref{def:modulus}).
We may choose a sequence $y_{k}^{(m+1)}$,  $d(y^{(m+1)}_{k},y_{k}^{(\ell)})\to\infty$ for all $\ell=1,\dots,m$,
%and its trailing family $\lbrace(y_{k;i}^{(m+1)})_{k\in\N}\rbrace_{i\in\N_{0}}$,
in such a way that the corresponding global profile $w^{(m+1)}$ of $(u_{k})$ satisfies 
\begin{equation}
\|w^{(m+1)}\|_{H^{1,2}(M_{\infty}^{(m+1)})}^{2}\ge\frac{1}{2}\nu^{(u_{k})}((y_{k}^{(1)}),\dots,(y_{k}^{(m)})).\label{eq:choose}
\end{equation}
Then  using the local profiles  $w_{i}^{(m+1)}$, $i\in\N_{0}$, we may  define, for a renamed subsequence, the associated global profile $w^{(m+1)}$ (cf. Proposition~\ref{prop:W}), and the corresponding
elementary concentration
$W_{k}^{(m+1)}$,  and put   
\[
v_{k}^{(m+1)}\eqdef u_{k}-\sum_{\ell=1}^{m+1}W_{k}^{(\ell)}.
\]
It is easy to see that the sequence $(v_{k}^{(m+1)})$ has the same
properties as $(v_{k}^{(n)})$, $n=0,\dots,m$. 

\emph{ Step 3}. By Lemma \ref{lem:Plancherel} we have 
\[\sum_{n=1}^{m}\|w^{(n)}\|_{H^{1,2}(M_{\infty}^{(n)})}^{2}\le\limsup\|u_{k}\|_{H^{1,2}(M)}^{2}
\]
for any $m$, which proves (\ref{eq:Plancherel}).

\emph{Step 4}. In order to prove convergence of the series $\sum_{n=1}^{\infty}W_{k}^{(n)}$
note first that we may assume without loss of generality that for
each $n\in\N$, $ $there exists $r_{n}>0$ such that $\supp W_{k}^{(n)}\subset B(y_{k}^{(n)},r_{n})$.
Indeed, acting like in the proof of Lemma \ref{lem:Plancherel}, from
the calculations in the proof of Lemma \ref{lem:length} one can easily
see that one can approximate $W_{k}^{(n)}$ in the $H^{1,2}$-norm
by restricting summation in (\ref{eq:elem-conc}) to a finite number
of terms, with the norm of the remainder bounded by, say, $\epsilon2^{-n}$
with a small $\epsilon>0$. Then, for any $m\in\N$ one can extract
a subsequence $(k_{j}^{(m)})_{j\in\N}$ of $(k)_{k\in\N}$ such that
$d(y_{k}^{(n)},y_{k}^{(\ell)})>r_{n}+r_{\ell}$ whenever $1\le\ell<n\le m$.
Then on a diagonal subsequence $(k_{m}^{(m)})_{m\in\N}$ the elementary
concentrations $(W_{k}^{(n)})_{k=k_{m}^{(m)}, m\in\N}$ will have pairwise
disjoint supports. Together with (\ref{eq:Plancherel}) this proves
that the convergence is unconditional and uniform with respect to $k$.

\emph{Step 5}.  Now we prove that  $(u_{k}-\sum_{\ell=1}^{\infty}W_{k}^{(\ell)})\circ e_{y_{k}}\to0$
in $L^{p}(M)$ for any sequence $y_k$ in $Y$.

Let first $(y_{k})$ in $Y$ be a bounded sequence. Since
it has finitely many values, on each constant subsequence we have
$u_{k}\circ e_{y}\rightharpoonup0$ and $W_{k}^{(\ell)}\circ e_{y}\rightharpoonup0$,
and thus $(u_{k}-\sum_{\ell=1}^{\infty}W_{k}^{(\ell)})\circ e_{y_{k}}\rightharpoonup0$.

Let now $(y_{k})$ be a discrete sequence in $Y$. If there is $\ell\in \N$ such that on a renamed subsequence we have $d(y_k,y_k^{(\ell)})$ is bounded. Then on a renamed subsequence $y_k=y^{(\ell)}_{k;i}$ for some $i$,  cf. Step 2. But then $u_{k}\circ e_{y_k}\rightharpoonup w^{(\ell)}_i$,  
$W_{k}^{(\ell)}\circ e_{y_k}\rightharpoonup w^{(\ell)}_i$ and $W_{k}^{(n)}\circ e_{y_k}\rightharpoonup 0$ if $n\not= \ell$, cf. Lemma \ref{lem:decoupling}.   Thus $(u_{k}-\sum_{\ell=1}^{\infty}W_{k}^{(\ell)})\circ e_{y_{k}}\rightharpoonup0$.

Let $(y_{k})$ be a discrete sequence in $Y$, such that $d(y_k,y_k^{(\ell)})\rightarrow \infty$ for any  $\ell\in \N_0$. 
Assume that on a renamed subsequence $(u_{k}-\sum_{\ell=1}^{\infty}W_{k}^{(\ell)})\circ e_{y_{k}}\rightharpoonup w_{0}\neq0$.
Then $(y_{k})$ generates a profile $w$ of $(u_{k})$ on some manifold
ar infinity $M_{\infty}$ of $M$ that necessarily satisfies $\|w\|_{H^{1,2}(M_{\infty})}\le\nu^{(u_{k})}((y_{k}^{(1)}),\dots,(y_{k}^{(m)}))$
for any $m\in\N$. By (\ref{eq:Plancherel}) and (\ref{eq:choose})
we have $\nu^{(u_{k})}((y_{k}^{(1)}),\dots,(y_{k}^{(m)}))\to0$ as
$m\to\infty$, and therefore $w=0$, which implies $w_{0}=0$. This gives  the contradiction.

We conclude that $(u_{k}-\sum_{\ell=1}^{\infty}W_{k}^{(\ell)})\circ e_{y_{k}}\rightharpoonup0$
for any sequence $(y_{k})$ in $Y$, and by Lemma \ref{lem:spotlight}
$(u_{k}-\sum_{\ell=1}^{\infty}W_{k}^{(\ell)})\circ e_{y_{k}}\to0$
in $L^{p}(M)$.

\emph{Step 6}. It was proved in Step 4 that the series of elementary concentration $W^{(n)}_k$ is convergent in $H^{1,2}(M)$.  So for any $\epsilon>0$ the sum $S_k$ of the elementary concentrations can be approximated by the finite sum $S^\epsilon_k$ i.e.
\begin{align}
\left|\|u_k\|_p-\|S_k^\epsilon\|_p\right|\le
\left|\|u_k\|_p-\|S_k\|_p\right|+\|S_k-S_k^\epsilon\|_p\le
\\
 o(1)+C\|S_k-S_k^\epsilon\|_{H^{1,2}(M)}\le C\epsilon+o(1). \nonumber
\end{align} 
Moreover similarly to Step 4, we may assume without loss of generality
%that there exists $m\in\N$ such that $w^{(n)}=0$ for all $n>m$,
%and that 
all $w^{(n)}$ have compact support. In consequence we may assume that  there exists $m\in\N$ such that $w^{(n)}=0$ for all $n>m$,
and that $w^{(n)}$ have compact support if $n\le m$.

%Indeed, fixing an $\epsilon>0$, if we denote as $S^\epsilon_k$ a finite sum of elementary concentrations defined by  approximations (with finite support) %of profiles such that $\|S_k-S^\epsilon_k\|_{H^{1,2}(M)}\le \epsilon$ for all $k$ sufficiently large, then 

Let us now evaluate $\|S_k^\epsilon\|_p$.
Let us show first that 
\begin{equation}\label{eq:X9}
\int_{M}|W_{k}^{(n)}|^{p}\mathrm d v_g\to\int_{M_{\infty}^{(n)}}|w^{(n)}|^{p}\mathrm dv_{\widetilde g^{(n)}}.
\end{equation}
Indeed, omitting for the sake of simplicity the superscript $n$ and
taking into account that $w_{i}\circ e_{y_{k;i}}^{-1}\circ e_{y_{k;j}}\to w_{j}$, $e_{y_{k;j}}^{-1}\circ e_{y_{k;i}}\to\psi_{ji}$, and $\chi_{y_{k;j}}\circ e_{y_{k;j}}\to \chi_j$ as in the proof of Lemma~\ref{lem:Plancherel0}, we have:
\begin{align*}
&\int_{M}|W_{k}|^{p} \dv  =\int_{M}\left|\sum_{i\in\N_{0}}\chi_{y_{k;i}}w_{i}\circ e_{y_{k;i}}\right|^{p}\mathrm d v_g=
\\
 &\qquad =\sum_{j\in\N_{0}}\int_{\Omega_{\rho}}\chi_{y_{k;j}}\circ e_{y_{k;j}}\left|\sum_{i\in\N_{0}}\chi_{y_{k;i}}w_{i}\circ e_{y_{k;i}}\right|^{p}\circ e_{y_{k;j}}^{-1}\sqrt{g_{k;j}}\mathrm d\xi=
\\
&\qquad = \sum_{j\in\N_{0}}\int_{\Omega_{\rho}}(\chi_j+o(1))\left|\sum_{i\in\N_{0}}\chi_{y_{k;i}}\circ e_{y_{k;j}}^{-1}(w_{j}+o(1))\right|^{p}\sqrt{\widetilde{g}_{j}+o(1)}\mathrm d\xi\to
\\
 &\qquad\qquad\qquad\qquad \int_{M_{\infty}}|w|^{p}\mathrm  d v_{\widetilde g}.
\end{align*}
Note that the notation $o(1)$ above refers to functions vanishing in the sense of $C^\infty$ and that all infinite sums contain uniformly finitely many terms. 

Now, for all $k$ sufficiently
large, all elementary concentrations $W_{k}^{(n)}$ in the sum $S_k^\epsilon$ have pairwise disjoint supports, 
and, since $\ell^1\hookrightarrow \ell^\frac{p}{2}$, taking into account \eqref{eq:Plancherel}, we have
\begin{align*}
\left(\sum_{n\ge \nu}\int_{M_{\infty}^{(n)}}|w^{(n)}|^{p}\mathrm dv_{\widetilde g^{(n)}}\right)^\frac{2}{p}\le 
\sum_{n\ge \nu}\left(\int_{M_{\infty}^{(n)}}|w^{(n)}|^{p}\mathrm dv_{\widetilde g^{(n)}}\right)^\frac{2}{p}\le
\\
\sum_{n\ge \nu}
C\|w^{(n)}\|_{H^{1,2}(M^{(n)}_\infty)}^2\to 0 \mbox{ as } \nu\to\infty.
\end{align*}
Therefore (\ref{eq:newBL}) is immediate from (\ref{eq:PD}), which completes the proof of Theorem~\ref{thm:main}. 
\hfill $\Box$

\section{Local and global profile decompositions on cocompact manifolds.}

Let $M$ be now a smooth connected complete Riemannian manifold, cocompact
relative to a subgroup $G$ of its isometry group, that is, we assume that there exists
an open bounded set $\mathcal{O}$ such that $\cup_{g\in G}g\mathcal{O}=M$.
Then $M$ is obviously of bounded geometry. It is then natural
to ask if Theorem \ref{thm:main} yields Theorem \ref{thm:FT} with
the manifolds $M_{\infty}^{(n)}$ isometric to $M$. Below we consider
this question in the case when $G$ is a discrete countable subgroup. Without loss of generality we may assume that $\mathcal{O}$ is a geodesic ball.
\begin{thm}
Let $M$ be a smooth connected $N$-dimensional Riemannian manifold,
let $\rho\in(0,\frac{r(M)}{8})$ and $z\in M$, and assume that there exists
a discrete countable subgroup $G$ of isometries on $M$ such that
$\lbrace B(gz,\rho)\rbrace_{g\in G}$ covers $M$ with a uniformly
finite multiplicity. Then 

(i) one can choose the construction parameters of manifolds $M_{\infty}^{(n)}$,
so that they will coincide, up to isometry, with $M$, 
and

(ii) there exist sequences $(g_{k}^{(n)})_{k\in\N}$, of elements
in $G$, and functions $w^{(n)}\in H^{1,2}(M)$, $n\in\N$, such that
the sequences $([g_{k}^{(\ell)}]^{-1}g_{k}^{(n)})_{k\in\N}$ are discrete
whenever $\ell\neq n$, $u_{k}\circ g_{k}^{(n)}\rightharpoonup w^{(n)}$
in $H^{1,2}(M)$, $n\in\N$, and 
\[
W_{k}^{(n)}=w^{(n)}\circ[g_{k}^{(n)}]^{-1}.
\]
\end{thm}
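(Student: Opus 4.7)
The plan is to exploit the $G$-equivariance of every object entering the construction of Section~3, so that the manifolds at infinity become literally copies of $M$. First, I would take the discretization $Y$ to be the single $G$-orbit $Gz$ (shrinking $\rho$ if necessary so that $Gz$ is $\hat\rho$-separated, which is possible by discreteness of $G$) and select the orthonormal frames $i_y$ at points $y\in Y$ in a $G$-equivariant fashion, namely $i_{gz}=dg_z\circ i_z$ for every $g\in G$. Assuming $G$ acts freely on $z$ (a generic condition; otherwise one uses a section over a fundamental domain), this is well-defined and forces $e_{gz}=g\circ e_z$. I would also take a $G$-invariant partition of unity $\chi_{gz}=\chi_z\circ g^{-1}$ subordinated to $\{B(gz,\rho)\}_{g\in G}$.

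Writing $y_k^{(n)}=g_k^{(n)}z$, and enumerating $G$ as $h_0=\mathrm{id},h_1,h_2,\dots$ in order of increasing $d(z,h_iz)$, the trailing system is $y_{k;i}^{(n)}=g_k^{(n)}h_iz$ by the isometric invariance of distances. The finite-$k$ transition maps are then
\[
\psi_{ij,k}^{(n)}=e_{y_{k;i}^{(n)}}^{-1}\circ e_{y_{k;j}^{(n)}}=e_z^{-1}\circ h_i^{-1}h_j\circ e_z,
\]
independent of $k$ and $n$, so no subsequence extraction is needed and the limits $\psi_{ij}$ coincide with the transition maps of the natural atlas on $M$ given by the charts $e_{h_iz}$. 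A parallel calculation shows that the pre-metric tensors in \eqref{eq:pre-metric} stabilize to $e_{h_iz}^{\ast}g$, so the gluing data produced in Section~3 is precisely that of $M$ itself. This produces an explicit isometry $F_n\colon M_\infty^{(n)}\to M$ characterized by $F_n\circ\varphi_i^{(n)}=e_{h_iz}$, which establishes~(i).

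For (ii), I would set $w^{(n)}\eqdef\tilde w^{(n)}$, where $\tilde w^{(n)}$ is the push-forward of the global profile under $F_n$. Using $e_{y_{k;i}^{(n)}}=g_k^{(n)}h_i\circ e_z$ together with $F_n\circ\varphi_i^{(n)}=h_i\circ e_z$, a direct substitution yields, on the support of $\chi_{y_{k;i}^{(n)}}$,
\[
w^{(n)}\circ\varphi_i^{(n)}\circ e_{y_{k;i}^{(n)}}^{-1}=\tilde w^{(n)}\circ[g_k^{(n)}]^{-1},
\]
\emph{independently of $i$}. Since $\sum_i\chi_{y_{k;i}^{(n)}}\equiv 1$, summing over the partition of unity collapses \eqref{eq:elem-conc} to $W_k^{(n)}=w^{(n)}\circ[g_k^{(n)}]^{-1}$. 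The weak convergence $u_k\circ g_k^{(n)}\rightharpoonup w^{(n)}$ in $H^{1,2}(M)$ then follows because the local profile convergences $u_k\circ e_{y_{k;i}^{(n)}}\rightharpoonup w_i^{(n)}$ are exactly its restrictions to $B(h_iz,\rho)$, and those balls cover $M$ with uniformly finite multiplicity. The discreteness of $([g_k^{(\ell)}]^{-1}g_k^{(n)})_k$ for $\ell\neq n$ follows from $d(y_k^{(n)},y_k^{(\ell)})=d(z,[g_k^{(n)}]^{-1}g_k^{(\ell)}z)\to\infty$ together with local finiteness of $Gz$.

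The main obstacle is the equivariant setup itself: arranging the orthonormal frames and the partition of unity to transform covariantly under $G$, so that every chart $e_{y_{k;i}^{(n)}}$ becomes a literal $G$-translate of a single fixed chart $e_z$. Once this bookkeeping is in place, everything else is an unwinding of definitions plus the identity $\sum_i\chi_{y_{k;i}^{(n)}}\equiv 1$. The only place I anticipate having to be careful is the case of nontrivial isotropy at $z$, which is why I would either invoke genericity of $z$ or replace the orbit $Gz$ by a $G$-equivariant discretization built from a fundamental domain, neither of which affects the subsequent algebra.
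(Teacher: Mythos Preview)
Your proposal is correct and follows essentially the same route as the paper: both hinge on the equivariant choice $e_{gz}=g\circ e_z$, which makes the transition maps $\psi_{ij,k}=e_z^{-1}\circ h_i^{-1}h_j\circ e_z$ constant in $k$, identifies the gluing data with that of $M$ via the isometry $F\circ\varphi_i=h_i\circ e_z$ (the paper writes $T_i=h_i\circ e_z\circ\varphi_i^{-1}$), and then collapses the elementary concentration to $w\circ g_k^{-1}$ using $\sum_i\chi_{y_{k;i}}\equiv 1$. You are slightly more explicit than the paper about the $G$-invariant partition of unity and the isotropy issue at $z$, but the argument is the same.
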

\begin{proof}
1. Let us repeat the construction of the manifold at infinity relative to a sequence $(y_k)$ in $Y=\{gz\}_{g\in G}$. Fix a sequence $h_{i}\in G$, $h_0=\mathrm{id}$, such that $d(h_{i+1}z,z)\ge d(h_iz,z)$, $i\in\N_{0}$,
and define the $i$th
trailing sequence of $(y_k)$ by
$y_{k;i}\eqdef g_{k}h_{i}z$,  $k\in\N$. 
Recall that the normal coordinates at the points $y\in Y$ were defined as $\exp_y$ up to an arbitrarily fixed isometry on $T_yM$. For the present construction we set them specifically as $e_{gz}\eqdef g\circ e_{z}$. Under such choice the transition maps of $M_{\infty}^{(y_{k;i})}$ are characterized by elements of the group $G$:
\[
\psi_{ij}=\lim_{k\to\infty}e_{y_{k;i}}^{-1}\circ e_{y_{k;j}}=\lim e_{z}^{-1}\circ[g_{k}h_{i}]^{-1}g_{k}h_{j}\circ e_{z}=e_{z}^{-1}\circ h_{i}^{-1}h_{j}\circ e_{z},
\]
 and the sequences above are in fact constant with respect to $k$. Consequently, the transition maps $\psi_{ij}$ of the manifold $M_{\infty}^{(y_{k;i})}$ are $e_{z}^{-1}\circ h_{i}^{-1}h_{j}\circ e_{z}$ - same as of $M$ itself. In other words, all the gluing data for $M_{\infty}^{(y_{k;i})}$ are taken from $M$, which suggests, since Theorem~\ref{thm:gluing} is based on a suitable list of properties of charts of a manifold that will allow its reconstruction that $M_{\infty}^{(y_{k;i})}$ is isometric to $M$. We will, however, apply Corollary~\ref{cor:gluing} formally, as follows. 
 
 Manifold $M_\infty^{(y_{k;i})}$ has an atlas $\lbrace(\varphi_{i}(\Omega_{\rho}),\varphi^{-1}_{i})\rbrace_{i\in\N_{0}}$ with
transition maps $\varphi_{i}^{-1}\varphi_{j}=e_{z}^{-1}\circ h_{i}^{-1}h_{j}\circ e_{z}$, while manifold $M$ has an atlas, enumerated by $h_i\in G$, $\lbrace(B(h_i(z),\rho),  e_{z}^{-1}\circ h_{i}^{-1})\rbrace_{i\in\N_{0}}$
with the same transition maps as $M_{\infty}$. 
Let $T_i\eqdef h_{i}\circ e_{z}\circ \varphi_{i}^{-1}: \;\varphi_{i}(\Omega_\rho)\to M$,  $i\in\N_0$, and note that this defines a smooth map $T:M_{\infty}^{(y_{k;i})}\to M$, since the values of $T_i$ are consistent on 
intersections of sets $\varphi_{i}(\Omega_\rho)$:
\begin{align}
h_{i}\circ e_{z}\circ \varphi_{i}^{-1}\circ [h_{j}\circ e_{z}\circ \varphi_{j}^{-1}]^{-1}= h_{i}\circ e_{z}\circ \psi_{ij} \circ [h_{j}\circ e_{z}]^{-1}=\\
h_{i}\circ e_{z}\circ e_{z}^{-1}\circ h_{i}^{-1}h_{j}\circ e_{z}\circ e_{z}^{-1}\circ h_j^{-1}
=\mathrm{id}.
\end{align}
Furthermore, $T$ is a diffeomorphism with $T^{-1}=\varphi_{i}\circ e_{z}^{-1}\circ h_{i}^{-1}$, consistently defined
on $B(g_{i}z,\rho)$, $i\in\N_{0}$. Note that (\ref{eq:metric_1.2a})
on $M_{\infty}^{(y_{k;i})}$ holds because it holds on $M$ with the same transition
map for every $k$, so the Riemannian metric on $M_{\infty}^{(y_{k;i})}$ in the normal
coordinates coincides with the Riemannian metric on $M$. In what
follows we will identify $M_{\infty}^{(y_{k;i})}$ as $M$.

2. Let now $(u_{k})$ be a bounded sequence in $H^{1,2}(M)$ and note
that its local profile associated with the sequence $(g_{k}h_{i})_{k\in\N}$
is given by 
\[
w_{i}=\wlim u_{k}\circ(g_{k}h_{i})\circ e_{z},
\]
and the global profile is by definition $w=w_{i}\circ\varphi_{i}^{-1}=w_{i}\circ e_{z}^{-1}\circ h_{i}^{-1}=\wlim u_{k}\circ g_{k}$
, which coincides with the profile of $(u_{k})$ as defined in Theorem
\ref{thm:FT} in (\ref{eq:profile}) relative to the sequence $(g_{k}).$
Consider now the local concentration defined by the array  $\lbrace w_{i}\rbrace_{i\in\N_{0}}$ of local profiles:
\begin{align}
W_{k}= & \sum_{i\in\N_{0}}\chi_{g_{k}h_{i}z}w_{i}\circ e_{y_{k;i}}^{-1}=
\sum_{i\in\N_{0}}\chi_{g_{k}h_{i}z}w_{i}\circ e_{z}^{-1}\circ h_{i}^{-1}\circ g_{k}^{-1}= \nonumber
\\
= &\sum_{i\in\N_{0}}\chi_{g_{k}h_{i}z}w\circ g_{k}^{-1}=w\circ g_{k}^{-1},\nonumber
\end{align}
which completes the proof. 
\end{proof}

\section{Appendix}

\subsection{Manifolds of bounded geometry and covering lemma}

In this appendix we give some elementary properties of manifolds of bounded geometry. All needed definition can be found e.g. in Chavel's book \cite{Chavel}. Let $M$ be an $N$-dimensional  Riemannian manifold of bounded geometry with a metric tensor $g$.  Let $v_g$ denote the Riemannian measure on $M$ and let $L_2(M)$ be the corresponding space of square  integrable functions.  For $k$ integer, and $f:M\rightarrow \C$ we denote by $\nabla f$ the  covariant derivative of $u$,  and by $|\nabla u|$ the norm of $\nabla u$ defined by a local chart i.e. 
\[
|\nabla f|^2 = g^{ij}\partial_{i} u  \partial_{j}  u\, 
\] 
where $g^{ij}$ are the components of the inverse matrix of the metric matrix $g=(g_{ij})$. 
The Sobolev space $H^{1,2}(M)$ is a completion of $C^{\infty}_o(M)$ with respect to the norm given by 
\[
\|f\|^2_{H^{1,2}}=  \|\nabla f\|^2_2  + \| f\|^2_2.
\]

%%%%%%%%%%%%%%%%%%%%%%%%%%%%%%%%%%%%%%%%%%%%%%
We start with the following lemma, and refer to \cite{Eich} for the proof.

\begin{lem}\label{eichhorn}
Let $M$ be a Riemannian manifold of bounded geometry and let $0< r<r(M)$. If $k\in \N$ then there exists a constant $C_k$ dependent on the curvature bounds and $r$ but independent of $x\in M$, which bounds the $C^k$-norm of components $g_{ij}$ of the metric tensor $g$ and it inverse $g^{ij}$ in any normal coordinate system of radius not exceeding $r$ at any point $x\in M$.
\end{lem}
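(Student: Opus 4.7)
The plan is to express the components of the metric tensor in normal coordinates through Jacobi fields along radial geodesics, and to convert the bounded geometry hypothesis on $\nabla^k R^M$ into uniform ODE estimates that yield $C^k$-bounds on $g_{ij}$ independent of the base point $x$.

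First, I would set up the ODE formulation. Fix $x\in M$, an orthonormal frame at $x$, and the associated normal coordinates $e_x:\Omega_r\to B(x,r)$. For $\xi\in\Omega_r$ the curve $t\mapsto e_x(t\xi)$ is a radial geodesic, and the coordinate vector fields at $t\xi$ arise as Jacobi fields $J_i(t;\xi)$ along this geodesic with initial data $J_i(0)=0$, $\nabla_tJ_i(0)=E_i$. The metric components are then $g_{ij}(\xi)=\langle J_i(1;\xi),J_j(1;\xi)\rangle$ (after rescaling), so controlling $g_{ij}$ in $C^k$ reduces to controlling the Jacobi fields and their derivatives with respect to $\xi$. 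Parallel-transporting $J_i$ into a fixed orthonormal frame along each geodesic rewrites the Jacobi equation as a linear system $J''(t)+A(t,\xi)J(t)=0$, where the entries of $A$ are components of the Riemann tensor evaluated along the geodesic.

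Second, I would invoke Definition \ref{def:bg}(ii): since $\nabla^k R^M$ is uniformly bounded on $M$ for every $k$, the coefficient matrix $A(t,\xi)$ together with all its derivatives in $t$ and $\xi$ is bounded by constants depending only on the curvature bounds and $r$, uniformly in $x$. Standard ODE regularity (smooth dependence of solutions on parameters, plus Gronwall-type estimates applied to successively differentiated equations) then provides uniform $C^k$-bounds on $J_i(\,\cdot\,;\xi)$ on $[0,1]\times\Omega_r$, hence uniform $C^k$-bounds on $g_{ij}$ that depend only on $k$, $r$, and the curvature bounds.

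Third, I would handle the inverse $g^{ij}$. The main obstacle here is ensuring a uniform lower bound on $\det g$, without which the Neumann-series / cofactor expression for $g^{ij}$ cannot be bounded in $C^k$. To get it, observe that $g_{ij}(0)=\delta_{ij}$ and $\partial_\ell g_{ij}(0)=0$ (normal coordinates), so by the uniform $C^1$-bound obtained above, shrinking $r$ if necessary (but uniformly in $x$, using only the curvature bounds), one has $\frac{1}{2}|v|^2\le g_\xi(v,v)\le 2|v|^2$ on $\Omega_r$. Combined with the Gauss Lemma, this gives a uniform positive lower bound on $\det g$. Expanding $g^{ij}$ via Cramer's rule and applying the Leibniz and chain rules then transfers the $C^k$-bound from $g_{ij}$ to $g^{ij}$, with constants of the same type. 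This completes the scheme; the only quantitatively delicate point is the uniform positivity of $\det g$, which is why the restriction $r<r(M)$ and the assumed curvature control must be used together.
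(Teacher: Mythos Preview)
The paper does not give its own proof of this lemma: immediately after stating it in the Appendix the authors write ``and refer to \cite{Eich} for the proof'' and move on. So there is nothing in the paper to compare your argument against beyond that citation.

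Your Jacobi-field scheme is the standard route (and essentially the one in Eichhorn's paper): express $g_{ij}$ in normal coordinates through Jacobi fields along radial geodesics, convert the hypothesis $\nabla^k R^M\in L^\infty$ into uniform bounds on the coefficient matrix of the Jacobi ODE and its parameter derivatives, and then propagate via Gronwall and differentiation with respect to initial data. That part is fine.

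One point to tighten: in your third step you obtain the uniform lower bound on $\det g$ by ``shrinking $r$ if necessary'', but the lemma is asserted for \emph{every} $r<r(M)$, and the constant $C_k$ is allowed to depend on $r$. A perturbation-from-the-origin argument only gives positivity of $\det g$ on a ball of radius controlled by the curvature bounds, which may be strictly smaller than $r(M)$. To cover the full range $r<r(M)$ one uses that there are no conjugate points within the injectivity radius together with a comparison/ODE argument (e.g.\ a Rauch-type lower bound from the upper sectional-curvature bound, or a direct Gronwall estimate on $J(t)^{-1}$ using that $J(t)$ is known to be invertible on $[0,r]$). You flag this as ``the only quantitatively delicate point'', which is accurate; just be aware that the remedy is not to shrink $r$ but to extract the lower bound from the curvature hypothesis on the whole of $\Omega_r$.
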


For any two points $x\in M$ and $0<r<r(M)$ let  
\[
e_{x}:\Omega_{r}\rightarrow B(x,r)
\]
denote a normal coordinate system at $x$ defined on the euclidean ball $\Omega_r$ centered at origin.

%%%%%%%%%%%%%%%%%
The boundedness of the derivatives of the Riemann curvature tensor is equivalent to the following lemma, cf.\cite{Shubin},
\begin{lem}
\label{lem:bdd-derivatives}
If the manifold $M$ has bounded geometry and  $0<r<r(M)$  then for any $\alpha\in\N_{0}^{N}$ there exists a constant $C_{\alpha}>0$,
such that $ $
\[
|D^\alpha e_{y}^{-1}\circ e_{x}(\xi)|\le C_{\alpha}\mbox{ whenever }x,y\in M,\text{and}\; B(x,r)\cap B(y,r)\not= \emptyset.
\]
\end{lem}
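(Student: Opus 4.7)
The plan is to realise $\psi(\xi)\eqdef e_y^{-1}\circ e_x(\xi)$ as the time-$1$ value of a geodesic ODE in the $y$-chart, depending smoothly on the parameter $\xi$, and then to invoke classical smooth dependence on parameters together with the uniform $C^k$-bounds supplied by Lemma~\ref{eichhorn}. Fix $x,y\in M$ with $B(x,r)\cap B(y,r)\neq\emptyset$, set $\eta_0\eqdef e_y^{-1}(x)$ (so $|\eta_0|=d(x,y)<2r$), and pick the unique linear $A:\R^N\to\R^N$ with $i_x=(de_y)_{\eta_0}\circ A$; since $|i_x|=1$ and $(de_y)_{\eta_0}^{-1}$ is controlled by the uniform bounds on $g^{ij}$, one has $|A|\le C$ with $C$ depending only on $r$ and on the curvature bounds of $M$. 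For $\xi\in\Omega_r$ the curve $t\mapsto e_x(t\xi)=\exp_x(t\,i_x\xi)$ is the geodesic from $x$ with initial velocity $i_x\xi$; written in the normal chart at $y$, it becomes the solution $\gamma(t;\xi)$ of
\[
\ddot\gamma^k+\Gamma^k_{ij}(\gamma)\dot\gamma^i\dot\gamma^j=0,\qquad \gamma(0;\xi)=\eta_0,\qquad \dot\gamma(0;\xi)=A\xi,
\]
where $\Gamma^k_{ij}$ are the Christoffel symbols of $g$ in the $y$-coordinates, so that $\psi(\xi)=\gamma(1;\xi)$ on the subset of $\Omega_r$ where this trajectory remains inside the coordinate ball.

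By Lemma~\ref{eichhorn}, the components $g_{ij}$, $g^{ij}$, and hence all $\Gamma^k_{ij}$, are bounded in every $C^k(\Omega_r)$-norm by constants depending only on $r$ and on the curvature bounds of $M$, uniformly in $y$. The parameter $\xi$ enters the ODE only linearly through the initial velocity $A\xi$, with $|A|\le C$ independent of $x,y$, so all $\xi$-derivatives of the initial data are uniformly bounded. Differentiating the geodesic equation in $\xi$, the derivatives $D_\xi^\alpha\gamma(t;\xi)$ satisfy linear variational ODEs whose coefficients are polynomial expressions in the $D^\beta\Gamma^k_{ij}$, $|\beta|\le|\alpha|$, evaluated along $\gamma$. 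An induction on $|\alpha|$ combined with Gronwall's inequality on $t\in[0,1]$ then yields $|D^\alpha\psi(\xi)|\le C_\alpha$ with $C_\alpha$ independent of $x,y$.

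The main conceptual obstacle is to keep the estimate uniform despite the fact that $x\mapsto i_x$, and hence $A$, is not required to depend smoothly on the basepoints. This is resolved by noticing that only differentiation in $\xi$ appears in the argument, so pointwise boundedness of $A$ is enough; the matrix $A$ is held fixed as $\xi$ varies. A secondary, routine point is to ensure that $\gamma(t;\xi)$ stays inside the coordinate domain of the $y$-chart for $t\in[0,1]$, so that the Christoffel symbols in that chart make sense along the trajectory. This is automatic in the regime of application in~\S3 (where $r=2\rho$ with $\rho<r(M)/8$, so $|\gamma(t)|\le|\eta_0|+t|A\xi|<3r<r(M)$ once $r$ is taken small enough), which is precisely the range in which the lemma is invoked throughout the paper.
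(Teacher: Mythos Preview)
The paper does not give a proof of this lemma: in the appendix it is stated as an equivalent formulation of condition (ii) in Definition~\ref{def:bg} (boundedness of all covariant derivatives of the curvature tensor), with a reference to Shubin~\cite{Shubin}. So there is no ``paper's proof'' to compare against.

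Your argument is a correct and standard way to establish the result. Writing the geodesic $t\mapsto e_x(t\xi)$ in the $y$-normal chart and invoking smooth dependence on the parameter $\xi$, with the Christoffel symbols $\Gamma^k_{ij}$ controlled in every $C^m$ by Lemma~\ref{eichhorn}, is precisely the mechanism behind the cited equivalence; the induction on $|\alpha|$ with Gronwall on the linear variational equations is the right bookkeeping. Your observation that only $\xi$-derivatives are taken, so that the mere boundedness of the frame matrix $A=(de_y)_{\eta_0}^{-1}\circ i_x$ suffices, correctly disposes of the lack of smoothness of $x\mapsto i_x$. The one genuine restriction you flag---that the trajectory must remain in the $y$-chart, which forces $3r<r(M)$ in your argument as written---is real for this ODE approach, but as you note it is harmless: the lemma is only ever applied in \S3 with $r=2\rho<r(M)/4$, well inside that range.
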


%%%%%%%%%%%%%%%%%%%%%%%%%%%%%%%%%%%%%%%%%%%%%%%%%%%%%%%%%%%%%%%
The next two statements  can be found  is many places  in  literature, cf. eg. \cite{GS}, \cite{Shubin}, \cite{LS}.  

\begin{lem}
\label{lem:covering}
Let $M$ be a $N$-dimensional connected Riemannian
mani\-fold with bounded geometry. Let $\rho>0$. There exists an at
most countable set $Y\in M$ such that 
\begin{align}
d(y,y') & \ge\rho/2\quad  \text{ whenever }\;y\neq y',\; y,y'\in Y,\label{eq:discrete}\\
  M  & = \bigcup_{y\in Y} B(y,r)\quad \text{ for any } \quad r>\rho .
\end{align}
Moreover   for any $r>\rho$ the multiplicity of the covering $\lbrace B(y,r)\rbrace_{y\in Y}$ 
is uniformly finite.
\end{lem}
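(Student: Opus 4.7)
The plan is to construct $Y$ as a maximal $(\rho/2)$-separated subset of $M$ using Zorn's lemma on the family of subsets satisfying \eqref{eq:discrete}, ordered by inclusion. The union of a chain of such subsets remains $(\rho/2)$-separated, so Zorn's lemma delivers a maximal element $Y$. Countability will follow from the fact that any Riemannian manifold is second countable, and a discrete subset of a second countable space is at most countable; alternatively, the balls $\{B(y,\rho/4)\}_{y\in Y}$ are pairwise disjoint and each contains a point of a fixed countable dense subset of $M$.

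For the covering property, I would argue by maximality: if some $x\in M$ satisfied $d(x,y)\ge\rho/2$ for every $y\in Y$, then $Y\cup\{x\}$ would still satisfy \eqref{eq:discrete}, contradicting maximality. Hence $M=\bigcup_{y\in Y}B(y,\rho/2)$, and in particular $M=\bigcup_{y\in Y}B(y,r)$ for every $r>\rho$.

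The main step is the uniform finiteness of the multiplicity. Fix $r>\rho$ and $x\in M$, and put $Y_x\eqdef\{y\in Y:x\in B(y,r)\}$. Since the balls $B(y,\rho/4)$ are pairwise disjoint and contained in $B(x,r+\rho/4)$, we have
\[
\sum_{y\in Y_x}\vol\bigl(B(y,\rho/4)\bigr)\le\vol\bigl(B(x,r+\rho/4)\bigr).
\]
The hard part is to bound the two sides uniformly in $x$ and $y$. For the lower bound on $\vol(B(y,\rho/4))$ I would apply Lemma \ref{eichhorn}: since $\rho/4<r(M)$, in normal coordinates at $y$ the components $g_{ij}$ and $g^{ij}$ are bounded uniformly in $y\in M$, so the metric is uniformly comparable to the Euclidean one on $\Omega_{\rho/4}$, giving a positive constant $c_0>0$ independent of $y$ with $\vol(B(y,\rho/4))\ge c_0$. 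For the upper bound on $\vol(B(x,r+\rho/4))$, bounded geometry yields a uniform lower bound on the Ricci curvature of $M$, so the Bishop--Gromov volume comparison theorem provides a constant $C_r$ depending only on $r$, $\rho$, $N$, and the curvature bound such that $\vol(B(x,r+\rho/4))\le C_r$ for every $x\in M$.

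Combining the two inequalities gives $\#Y_x\le C_r/c_0$, which is the desired uniform bound on the multiplicity. The main technical obstacle is thus the volume comparison step, which is where the bounded geometry hypothesis enters essentially; the rest is a standard maximality and second countability argument.
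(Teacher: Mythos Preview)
The paper does not supply its own proof of this lemma; it simply states that the result ``can be found in many places in literature'' and cites \cite{GS}, \cite{Shubin}, \cite{LS}. Your argument is the standard one found in those references and is correct. In fact, the volume-comparison step you outline (uniform lower bound on small balls from the metric bounds in normal coordinates, uniform upper bound on large balls from Bishop--Gromov via the Ricci lower bound) is exactly the mechanism the paper itself invokes later in the proof of Lemma~\ref{Ji}.

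One small point: you write ``since $\rho/4<r(M)$'' when applying Lemma~\ref{eichhorn}, but the statement of Lemma~\ref{lem:covering} allows arbitrary $\rho>0$. This is harmless: for the lower volume bound simply replace the radius $\rho/4$ by $\min(\rho/4,\,r(M)/2)$, which still yields pairwise disjoint balls with a uniform positive volume bound. Everything else goes through unchanged.
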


\begin{lem}\label{lem:PU}
Let $M$, $Y$, $\rho$ and $r$ be as in Lemma~\ref{lem:covering}. There exists a smooth partition of unity $\{\chi_y\}_{y\in Y}$ on $M$, subordinated to the covering $\{B(y,\rho)\}_{y\in Y}$, such that  for any $\alpha\in\N_{0}^{N}$ there exists a constant $C_{\alpha}>0$,
such that 
\begin{equation}
|D^\alpha\chi_{y}|\le C_{\alpha} \label{eq:dalpha}
\end{equation}
for all $y\in Y$.
\end{lem}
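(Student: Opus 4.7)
The plan is to carry out the usual partition-of-unity construction by pulling back a single fixed bump function via normal coordinates, and then to exploit bounded geometry to make all derivative bounds uniform in the base point. First, I would fix once and for all a bump function $\phi\in C_c^\infty(\R^N)$ with $0\le\phi\le 1$, $\phi\equiv 1$ on $\Omega_{\rho/2}$, and $\supp\phi\subset\Omega_\rho$; for every multi-index $\alpha$ one has $|D^\alpha\phi|\le c_\alpha$ with $c_\alpha$ depending only on $\alpha$, $\rho$, $N$. For each $y\in Y$, define $\tilde\chi_y\eqdef\phi\circ e_y^{-1}$ on $B(y,\rho)$, extended by zero. Each $\tilde\chi_y$ is smooth with support in $\overline{B(y,\rho)}$ and equals $1$ on $B(y,\rho/2)$.

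Next I would establish uniform bounds on the derivatives of $\tilde\chi_y$ read in any normal chart. For an arbitrary $z\in M$ with $B(z,\rho)\cap B(y,\rho)\neq\emptyset$, write $\tilde\chi_y\circ e_z=\phi\circ(e_y^{-1}\circ e_z)$. By Lemma~\ref{lem:bdd-derivatives} the transition map $e_y^{-1}\circ e_z$ has all derivatives bounded by constants independent of $y$ and $z$, and combined with the fixed bounds on $\phi$ the Fa\`a di Bruno formula gives uniform $C^k$-estimates for $\tilde\chi_y\circ e_z$. (Equivalently, interpret the conclusion as covariant derivative bounds, which via Lemma~\ref{eichhorn} amount to the same statement.)

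Set $\Sigma\eqdef\sum_{y\in Y}\tilde\chi_y$. By Lemma~\ref{lem:covering} the covering $\{B(y,\rho)\}_{y\in Y}$ is uniformly locally finite, so $\Sigma$ is a uniformly locally finite sum and, by the estimates above, has derivatives bounded in any normal chart by constants independent of the base point. Define $\chi_y\eqdef\tilde\chi_y/\Sigma$; these form a smooth partition of unity subordinate to $\{B(y,\rho)\}$. The final step to $|D^\alpha\chi_y|\le C_\alpha$ is to apply the Leibniz rule together with the chain rule to $1/\Sigma=F\circ\Sigma$, where $F(t)=1/t$, all bounds being uniform in $y$.

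The principal obstacle is securing a uniform \emph{lower} bound $\Sigma\ge c>0$ on $M$, without which derivatives of $1/\Sigma$ cannot be controlled. This lower bound comes from observing that every $x\in M$ lies in some $B(y,\rho/2)$, so $\tilde\chi_y(x)=1$ and hence $\Sigma(x)\ge 1$; this in turn relies on $Y$ being a covering net at scale $\rho/2$ (in the sense of Definition~\ref{def:discr}, with the separation/covering parameters chosen consistently as in Lemma~\ref{lem:covering}). Once this is in hand, the remaining estimates are a routine application of the chain and Leibniz rules, and the constants $C_\alpha$ depend only on $\alpha$, on $\rho$, and on the curvature bounds of $M$ through Lemma~\ref{eichhorn}.
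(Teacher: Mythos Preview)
The paper does not prove this lemma; it simply states that the result ``can be found in many places in literature'' and cites \cite{GS}, \cite{Shubin}, \cite{LS}. Your construction is precisely the standard one appearing in those references and is correct.

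One small point worth flagging: the lower bound $\Sigma\ge 1$ requires that the balls of radius $\rho/2$ already cover $M$, which is not quite what Lemma~\ref{lem:covering} asserts as written (it states the covering for $r>\rho$, and the discretization used in the body of the paper has covering radius $\hat\rho\in(\rho/2,\rho)$). You correctly identify this as the principal obstacle and note the dependence on consistent parameter choice; the fix is simply to take $\phi\equiv 1$ on $\Omega_{\rho'}$ for any $\rho'<\rho$ at which the balls $B(y,\rho')$ are known to cover, after which the argument goes through verbatim.
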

%%%%%%%%%%%%%%%%%%%%%%%%%%
%%%%%%%%%%%%%%%%%%%%%%%
 The following corollary  is the immediate consequence of Lemma~\ref{eichhorn} above.
\begin{cor}
\label{lem:2.1}Let $p\in(0,\infty)$ and $r\in(0,r(M))$. There exists
a constant $C>1$ such that for any $x\in M$ 
\begin{equation}
C^{-1}\int_{B(x,r)} |u|^{p}d\mu\le \int_{\Omega_{r}}|u\circ e_{x}\text{\ensuremath{|^{p}} dx \ensuremath{\le} C \ensuremath{\int}}_{B(x,r)}|u|^{p}d\mu,\label{eq:peq}
\end{equation}
and 
\[
C^{-1}\int_{{B(x,r)}}|\nabla u|^{2}d\mu \le \int_{\Omega_{r}} \sum_{i=1}^N|\partial_i(u\ensuremath{\circ e_{x}})|^{2}dx
 \ensuremath{\le}C\ensuremath{\int}_{B(x,r)}|\nabla u|^{2}d\mu
\]
\end{cor}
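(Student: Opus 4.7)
The plan is to exploit the two basic consequences of bounded geometry contained in Lemma~\ref{eichhorn}: uniform $C^k$ bounds on the metric coefficients $g_{ij}$ and their inverse $g^{ij}$ expressed in any normal chart of radius $r < r(M)$, with constants independent of the base point $x$. Both inequalities in the statement are then essentially a change-of-variables computation followed by pointwise comparison of the measures and cometric with their flat counterparts.

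First, for the $L^p$ inequality I would apply the change-of-variables formula in the normal chart $e_x:\Omega_r\to B(x,r)$:
\[
\int_{B(x,r)}|u|^p\,\dv = \int_{\Omega_r} |u\circ e_x(\xi)|^p\,\sqrt{\det g(\xi)}\,d\xi .
\]
Since the normal coordinates are normalized so that $g_{ij}(0)=\delta_{ij}$, and since Lemma~\ref{eichhorn} bounds the $C^k$-norms of $g_{ij}$ on $\Omega_r$ by a constant depending only on the curvature bounds and $r$, the density $\sqrt{\det g(\xi)}$ satisfies $c\le\sqrt{\det g(\xi)}\le C$ on $\Omega_r$ with constants independent of $x$ (positivity of the lower bound uses that $e_x$ remains a diffeomorphism on $\Omega_r$, which is guaranteed by $r<r(M)$, so $\det g$ does not degenerate). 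The first chain of inequalities follows immediately.

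For the gradient inequality I would use the pointwise identity
\[
|\nabla u|^2\circ e_x(\xi)=\sum_{i,j=1}^N g^{ij}(\xi)\,\partial_i(u\circ e_x)(\xi)\,\partial_j(u\circ e_x)(\xi) .
\]
Lemma~\ref{eichhorn} also supplies uniform $C^k$ bounds on $g^{ij}$, and since $g^{ij}(0)=\delta^{ij}$, the matrix $(g^{ij}(\xi))$ has eigenvalues bounded above and below by constants independent of $x\in M$ and $\xi\in\Omega_r$. Consequently $|\nabla u|^2\circ e_x$ is pointwise comparable to the Euclidean sum $\sum_i|\partial_i(u\circ e_x)|^2$. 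Combining this comparison with the uniform two-sided bound on $\sqrt{\det g}$ obtained in the previous paragraph yields the second chain of inequalities.

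No serious obstacle arises; the only point deserving a line of justification is the uniform positive lower bound on $\sqrt{\det g}$ on the whole of $\Omega_r$ (as opposed to a small neighbourhood of the origin). This is precisely where bounded geometry enters in an essential way: the uniform $C^1$-bounds on the $g_{ij}$ together with the uniform non-degeneracy of $\exp_x$ on $\Omega_{r(M)}$ prevent $\det g$ from approaching $0$ anywhere in $\Omega_r$ uniformly in $x$.
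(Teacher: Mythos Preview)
Your argument is correct and is exactly what the paper has in mind: the paper does not give a proof at all but simply declares the corollary to be ``the immediate consequence of Lemma~\ref{eichhorn}'', and your change-of-variables computation together with the uniform two-sided bounds on $\sqrt{\det g}$ and on the eigenvalues of $(g^{ij})$ is precisely the content of that immediacy. One small remark: the uniform positive lower bound on $\sqrt{\det g}$ that you single out does not need a separate non-degeneracy argument for $\exp_x$; it already follows from the $C^0$ bound on the entries of $(g^{ij})$ supplied by Lemma~\ref{eichhorn}, since $\det(g^{ij})=1/\det(g_{ij})$ is then bounded above.
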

%%%%%%%%%%%
 
 We finish this subsection by recalling a technical but useful equivalent norm in    $H^{1,2}(M)$, cf. \cite{GS} or \cite[Chapter 7]{Triebel92},
%%%%%%%%%%%%%%%%%%%%%%%
\begin{lem}\label{eqv:norms}
Let $\{B(y_i,r)\}$ be a locally uniformly finite covering of  N-dimensional manifold $M$ with bounded geometry, $r\in(0,r(M))$ and let $\{\chi_i\}$ be a partition of unity subordinated to the covering $\{B(y_i,r)\}$ as in Lemma~\ref{lem:PU}. Then    
\begin{equation}\label{eqv:norm1}
\nl f\nr_{H^{1,2}(M)} = \Big( \sum_i \|\chi_i f\circ \exp_{y_i}\|^2_{H^{1,2}(\R^N)} \Big)^{1/2}  
\end{equation}
is an equivalent norm in  $H^{1,2}(M)$. Moreover   
\[
\|f\|_{H^{1,2}(M)} \sim \nl f\nr_{H^{1,2}(M)} \sim  \Big( \sum_i \|\chi_i f\|^2_{H^{1,2}(M)} \Big)^{1/2}.
\]
\end{lem}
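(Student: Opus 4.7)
The plan is to prove the chain of equivalences $\|f\|_{H^{1,2}(M)} \sim \bigl(\sum_i \|\chi_i f\|^2_{H^{1,2}(M)}\bigr)^{1/2} \sim \nl f\nr_{H^{1,2}(M)}$ in two stages, throughout using the three uniformities that bounded geometry supplies: the uniform $C^k$-bounds on $g_{ij}$ and $g^{ij}$ in normal coordinates (Lemma~\ref{eichhorn}, packaged in Corollary~\ref{lem:2.1}), the uniform derivative bounds on the partition of unity (Lemma~\ref{lem:PU}), and the uniformly finite multiplicity $K$ of the cover (Lemma~\ref{lem:covering}).

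For the second equivalence, fix $i$ and note that $\chi_i f$ is supported in $B(y_i,r)$. By Corollary~\ref{lem:2.1}, both $\int_{B(y_i,r)}|\chi_i f|^2\,dv_g$ and $\int_{B(y_i,r)}|\nabla(\chi_i f)|^2\,dv_g$ are comparable, with constants independent of $i$, to $\int_{\Omega_r}|(\chi_i f)\circ \exp_{y_i}|^2\,dx$ and $\int_{\Omega_r}\sum_{\nu=1}^N|\partial_\nu((\chi_i f)\circ \exp_{y_i})|^2\,dx$ respectively. Adding these and summing over $i$ yields $\bigl(\sum_i \|\chi_i f\|^2_{H^{1,2}(M)}\bigr)^{1/2} \sim \nl f\nr_{H^{1,2}(M)}$.

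For the first equivalence, the upper bound is obtained by expanding $\nabla(\chi_i f) = f\,\nabla \chi_i + \chi_i \nabla f$ and using $|a+b|^2 \le 2|a|^2 + 2|b|^2$, combined with the uniform bound $|\nabla \chi_i| \le C_1$ and the finite multiplicity: at each point at most $K$ summands are nonzero, so $\sum_i \chi_i^2 \le K$ and $\sum_i \mathbf{1}_{\mathrm{supp}\,\chi_i} \le K$, giving
\[
\sum_i \bigl(|\chi_i f|^2 + |\nabla(\chi_i f)|^2\bigr) \le K(1+2C_1^2)|f|^2 + 2K|\nabla f|^2,
\]
and integration yields $\sum_i \|\chi_i f\|^2_{H^{1,2}(M)} \le C\|f\|^2_{H^{1,2}(M)}$. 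For the reverse bound, the identity $\sum_i \chi_i \equiv 1$ implies both $f = \sum_i \chi_i f$ and, since $\sum_i \nabla \chi_i \equiv 0$, also $\nabla f = \sum_i \nabla(\chi_i f)$. Applying Cauchy–Schwarz pointwise to these finite sums of at most $K$ nonzero terms gives $|f|^2 \le K\sum_i |\chi_i f|^2$ and $|\nabla f|^2 \le K\sum_i |\nabla(\chi_i f)|^2$; integration completes the proof.

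The main obstacle is keeping every constant independent of $i$ throughout, which is exactly the content of the bounded geometry hypothesis together with the careful construction of the partition of unity in Lemma~\ref{lem:PU}. Once these uniformities are in place, the rest is elementary: a product-rule bound in one direction and pointwise Cauchy–Schwarz via the finite multiplicity $K$ in the other.
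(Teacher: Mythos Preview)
Your proof is correct. The paper itself does not give a proof of this lemma at all: it merely states the result and refers the reader to \cite{GS} and \cite[Chapter~7]{Triebel92} for the argument. What you have written is precisely the standard proof one finds in those references, carried out with the tools already assembled in the paper's appendix (Corollary~\ref{lem:2.1}, Lemma~\ref{lem:PU}, and Lemma~\ref{lem:covering}). One minor sharpening: in the upper bound you use $\sum_i \chi_i^2 \le K$, but since $0\le\chi_i\le 1$ and $\sum_i\chi_i=1$ you actually have $\sum_i\chi_i^2\le\sum_i\chi_i=1$; this does not affect the argument.
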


%%%%%%%%%%%%%%%%%%%%%%%%%%%%%%
%%%%%%%%%%%%%%%%%%%%%%%%%%%%%%%%%%
%%%%%%%%%%%%%%%%%%%%%%%%%%%%%%%%%%%
%%%%%%%%%%%%%%%%%%%%%%%%

\subsection{Gluing manifolds}
We use a particular case of gluing theorem in Gallier et al, \cite[Theorem 3.1]{Gallier3} 
\begin{defn}
\label{def: gluing data}(\cite[Definition 3.1]{Gallier3}, \cite[Definition 8.1]{Gallier}).
A \emph{set of gluing data} is a triple $(\lbrace\Omega_{i}\rbrace_{i\in\N_{0}},\lbrace\Omega_{ij}\rbrace_{i,j\in\N_{0}},\lbrace\psi_{ji}\rbrace_{(i,j)\in\mathbb{K}})$
satisfying the following properties:

(1) For every $i\in\N_{0}$, the set $\Omega_{i}$ is a nonempty open
subset of $\R^{N}$ and the sets $\lbrace\Omega_{i}\rbrace_{i\in\N_{0}}$
are pairwise disjoint;

(2) For every pair $i,j\in\N_{0}$, the set $\Omega_{ij}$ is an open
subset of $\Omega_{i}$. Furthermore, $\Omega_{ii}=\Omega_{i}$ and
$\Omega_{ji}\neq\emptyset$ if and only if $\Omega_{ij}\neq\emptyset$; 

(3) $\mathbb{K}=\lbrace(i,j)\in\N_{0}\times\N_{0}:\Omega_{ij}\neq\emptyset\rbrace$,
$\psi_{ji}:\Omega_{ij}\to\Omega_{ji}$ is a diffeomorphism for every
$(i,j)\in\mathbb{K}$, and the following conditions hold:

(a) $\psi_{ii}=\mathrm{id}|_{\Omega_{i}}$, for all $i\in\N_{0}$, 

(b) $\psi_{ij}=\psi_{ji}^{-1}$, for all $(i,j)\in\mathbb{K}$,

(c) For all $i,j,k\in\N_{0}$, if $\Omega_{ji}\cap\Omega_{jk}\neq\emptyset$,
then $\psi_{ij}(\Omega_{ji}\cap\Omega_{jk})=\Omega_{ij}\cap\Omega_{ik}$,
and $\psi_{ki}(x)=\psi_{kj}\circ\psi_{ji}(x)$, for all $x\in\Omega_{ij}\cap\Omega_{ik}$;

(4) For every pair$(i,j)\in\mathbb{K}$, with $i\neq j$, for every
$x\in\partial\Omega_{ij}\cap\Omega_{i}$ and every $y\in\partial\Omega_{ji}\cap\Omega_{j}$,
there are open balls $V_{x}$ and $V_{y}$ centered at $x$ and $y$
so that no point of $V_{y}\cap\Omega_{ji}$ is the image of any point
of $V_{x}\cap\Omega_{ij}$ by $\psi_{ji}$.

Each set $\Omega_{i}$ is called \emph{parametrization domain} or
\emph{$p$-domain}, each nonempty set $\Omega_{ij}$ is called a \emph{gluing
domain}, and each map $\psi_{ij}$ is called \emph{transition map}
or \emph{gluing map}. 
\end{defn}
%%%%%%%%%%%%%%%%%%%%%%%%%%%%%%%%%%%%%%%%%%%%%%%%%%%%%%%%%%%%%%%%%
%%%%%%%%%%%%%%%%%%%%%%%%%%%%%%%%%%%%%%%%%%%%%%%%%%%%%%%%%%%%%%%%
\begin{thm}
\label{thm:gluing}(\cite[Theorem 3.1]{Gallier3} )For every set of
gluing data, \[(\lbrace\Omega_{i}\rbrace_{i\in\N_{0}},\lbrace\Omega_{ij}\rbrace_{i,j\in\N_{0}},\lbrace\psi_{ji}\rbrace_{(i,j)\in\mathbb{K}}),\]
there exists a $N$-dimensional smooth manifold $M$ 
an atlas $(U_i,\tau_i)_i$ of $M$ such that $\tau_i(U_i)=\Omega_i$, whose transition maps are $\tau_j\circ\tau_i^{-1}= \psi_{ji}:\Omega_{ij}\to\Omega_{ji}$. 
%with an atlas
%$\lbrace\varphi_{i},\Omega_{i}\rbrace_{i\in\N_{0}}$ whose
% whose transition maps $\varphi_{j}^{-1}\varphi_{i}$ are $\psi_{ji}:\Omega_{ij}\to\Omega_{ji}$,
$i,j\in\N_{0}$.
\end{thm}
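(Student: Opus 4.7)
The plan is to realize $M$ as a quotient of a disjoint union of the parametrization domains. First I would set $\widetilde{M}\eqdef\bigsqcup_{i\in\N_{0}}\Omega_{i}$ with the disjoint union topology, writing $\iota_{i}:\Omega_{i}\hookrightarrow\widetilde{M}$ for the canonical embeddings, and declare an equivalence relation $\sim$ by $\iota_{i}(x)\sim\iota_{j}(y)$ if and only if $(i,j)\in\mathbb{K}$, $x\in\Omega_{ij}$, $y\in\Omega_{ji}$, and $\psi_{ji}(x)=y$. Properties (3a)--(3c) of Definition~\ref{def: gluing data} are precisely what is needed to verify that $\sim$ is reflexive (from $\psi_{ii}=\mathrm{id}$ and $\Omega_{ii}=\Omega_{i}$), symmetric (from $\psi_{ij}=\psi_{ji}^{-1}$), and transitive (from the cocycle identity $\psi_{ki}=\psi_{kj}\circ\psi_{ji}$ together with the compatibility of overlap domains expressed via $\psi_{ij}(\Omega_{ji}\cap\Omega_{jk})=\Omega_{ij}\cap\Omega_{ik}$).

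Next I would define $M\eqdef\widetilde{M}/\!\sim$ with the quotient topology, let $\pi:\widetilde{M}\to M$ be the projection, and put $\varphi_{i}\eqdef\pi\circ\iota_{i}$ and $U_{i}\eqdef\varphi_{i}(\Omega_{i})$. Injectivity of $\varphi_{i}$ is immediate from the definition of $\sim$, since no two distinct points of the same $\Omega_{i}$ are identified. For openness of $\varphi_{i}$, note that for any open $V\subset\Omega_{i}$ the saturated preimage $\pi^{-1}(\pi(\iota_{i}(V)))$ is the union over $j$ of $\iota_{j}(\psi_{ji}(V\cap\Omega_{ij}))$, which is open because each $\psi_{ji}$ is a diffeomorphism; hence $\varphi_{i}$ is a homeomorphism onto $U_{i}$. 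Setting $\tau_{i}\eqdef\varphi_{i}^{-1}$, a direct computation on $\tau_{i}(U_{i}\cap U_{j})=\Omega_{ij}$ yields $\tau_{j}\circ\tau_{i}^{-1}=\psi_{ji}$, so the transition maps have the prescribed form and are smooth, turning $\{(U_{i},\tau_{i})\}_{i\in\N_0}$ into a smooth atlas on $M$.

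The main obstacle is showing that $M$ is Hausdorff: the quotient construction could in principle fail to separate a boundary point of some $\Omega_{ij}$ inside $\Omega_{i}$ from a point in $\Omega_{j}$ that is a non-identified limit of its image under $\psi_{ji}$. This is precisely the scenario ruled out by condition (4) of Definition~\ref{def: gluing data}. Given $[\iota_{i}(x)]\neq[\iota_{j}(y)]$ in $M$, I would argue by cases: when $x,y$ both lie in the interior of their overlap domains, the continuity of $\psi_{ji}$ and the hypothesis $\psi_{ji}(x)\neq y$ supply separating Euclidean balls; when one of the points lies away from the closure of the overlap domain in its $p$-domain, small balls in $\Omega_{i}$ and $\Omega_{j}$ already have disjoint $\pi$-images; and when $x\in\partial\Omega_{ij}\cap\Omega_{i}$ together with $y\in\partial\Omega_{ji}\cap\Omega_{j}$, condition (4) directly provides balls $V_{x},V_{y}$ with $\psi_{ji}(V_{x}\cap\Omega_{ij})\cap(V_{y}\cap\Omega_{ji})=\emptyset$, and these push down to disjoint neighbourhoods in $M$. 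Second-countability is inherited from the countable disjoint union of the second-countable $\Omega_{i}\subset\R^{N}$, and $N$-dimensionality and smoothness of the atlas follow from the structure of the charts, completing the construction.
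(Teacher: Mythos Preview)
Your proposal is correct and follows the standard quotient construction; this is essentially the argument given in \cite{Gallier3}, from which the paper simply quotes the result without supplying its own proof. There is nothing to compare against here, as the paper treats Theorem~\ref{thm:gluing} as a black box and only proves the downstream Corollary~\ref{cor:gluing}. One minor remark: your Hausdorff case analysis is slightly incomplete as stated, since the mixed case $x\in\partial\Omega_{ij}\cap\Omega_i$ with $y\in\Omega_{ji}$ (or $y\notin\overline{\Omega_{ji}}$) is not explicitly covered, but these are easily handled by the same continuity and disjointness arguments you already use, so the outline stands.
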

%%%%%%%%%%%%%%%%%%%%%
\begin{rem}
Note that the theorem 
%asserts that there exist an atlas $(U_i,\tau_i)_i$ of $M$ such that $\tau_i(U_i)=\Omega_i$ and $\tau_j\circ\tau_i^{-1}= \psi_{ji}$ but it 
does not provide any specifics about the maps $\tau_{i}$ which are obviously not uniquely defined.
\end{rem}
%%%%%%%%%%%%%%%%%%%%%%%%%%%%%%%%%%%%%%%%%%%%%
\begin{cor}
\label{cor:gluing} 
%Let $0<\rho<a<i(M)$ 
 Let  $0<\rho<r<a$ and  let $\Omega_\rho\subset \Omega_r\subset \Omega_a$ be  balls in $\R^N$ centered at the origin  with  radius $\rho$, $r$ and $a$ respectively.  Let $\lbrace \widetilde{\psi}_{ij}\rbrace_{i,j\in\N_{0}}$ be a family of smooth open maps
$\widetilde{\psi}_{ij}:\Omega_{r}\to\Omega_{a}$.   Assume that a family $\lbrace \psi_{ji}=\widetilde{\psi}_{ji}|_{\Omega_\rho}\rbrace_{i,j\in\N_{0}}$ satisfies the following conditions:

(i) $\psi_{ii}=\mathrm{id}$, $i\in\N_{0}$;

(ii) $\psi_{ji}$ is a diffeomorphism between $\Omega_{ij}\eqdef\psi_{ij}(\Omega_{\rho})\cap\Omega_{\text{\ensuremath{\rho}}}$
and $\Omega_{ji}$, $i,j\in\N_{0}$, whenever $\Omega_{ji}\neq\emptyset$;

(iii) $\psi_{ij}=\psi_{ji}^{-1}$ on $\Omega_{ji}$, whenever $\Omega_{ji}\neq\emptyset$,
$i,j\in\N_{0}$;

(iv) $\psi_{ij}(\Omega_{ji}\cap\Omega_{jk})=\Omega_{ij}\cap\Omega_{ik}$,
and $\psi_{ki}(x)=\psi_{kj}\circ\psi_{ji}(x)$ for all \textup{$x\in\Omega_{ij}\cap\Omega_{ik}$},
$i,j,k\in\N_{0}$;

(v) for all $(i,j)\in \mathbb{K}\eqdef\lbrace(i,j)\in\N_{0}\times\N_{0}:\Omega_{ij}\neq\emptyset\rbrace$ and all $x\in \partial\Omega_{ij}\cap \Omega_\rho $ $\psi_{ji}(x)\in \partial\Omega_{ji}\cap \partial\Omega_\rho $.
%$\partial\Omega_{ij}\cap \partial \Omega_{ji}\cap\Omega_\rho = \emptyset$ for all $(i,j)\in K$.  

Then there exists a smooth differential manifold $M$ %$\tilde{M}$ 
with an atlas $\lbrace (U_i,\tau_{i})\rbrace_{i\in\N_{0}}$, such that $\tau_i(U_i)= \Omega_\rho$ for any $i\in \N_0$ and 
%$\lbrace\varphi_{i},\Omega_{\rho}\rbrace_{i\in\N_{0}}$, 
whose transition maps $\tau_{j}\circ\tau_{i}^{-1}$ are $\psi_{ji}:\Omega_{ij}\to\Omega_{ji}$.
$i,j\in\N_{0}$. 
\end{cor}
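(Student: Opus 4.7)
\emph{Proof proposal.} The plan is to apply Theorem~\ref{thm:gluing} after verifying that the restricted maps $\{\psi_{ji}\}_{(i,j)\in \mathbb{K}}$ together with suitable sets form a set of gluing data in the sense of Definition~\ref{def: gluing data}. Since that definition requires the parametrization domains $\Omega_i$ to be \emph{pairwise disjoint} subsets of $\R^N$, while here we have a single ball $\Omega_\rho$ to use for every chart, I would first take disjoint labeled copies $\Omega_i'\eqdef \Omega_\rho\times\{i\}$, identified with $\Omega_\rho$ by the natural projection $p_i$, and set $\Omega_{ij}'\eqdef p_i^{-1}(\Omega_{ij})$ and $\psi_{ji}'\eqdef p_j^{-1}\circ \psi_{ji}\circ p_i:\Omega_{ij}'\to\Omega_{ji}'$. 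Thus conditions (1) and (2) of Definition~\ref{def: gluing data} are immediate, using hypothesis (ii) to get that each $\psi_{ji}'$ is a diffeomorphism of the indicated open sets and that $\Omega_{ji}'\ne\emptyset$ iff $\Omega_{ij}'\ne\emptyset$.

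Next, condition (3) of Definition~\ref{def: gluing data} is a direct translation of hypotheses (i)--(iv): part (a) is (i), part (b) is (iii), and the cocycle part (c) is precisely (iv). These translations are routine and transport through the $p_i$'s without change. So the only substantive verification remaining is condition (4) of Definition~\ref{def: gluing data}, the topological separation at boundary points, which I regard as the main (and really only) obstacle.

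To establish condition (4), fix $(i,j)\in\mathbb{K}$ with $i\ne j$, a point $x\in\partial\Omega_{ij}\cap\Omega_\rho$ and a point $y\in\partial\Omega_{ji}\cap\Omega_\rho$. By hypothesis (v), $\psi_{ji}(x)\in\partial\Omega_\rho$, so $\lvert\psi_{ji}(x)\rvert=\rho$, while $\lvert y\rvert<\rho$. Let $d\eqdef \rho-\lvert y\rvert>0$ and take $V_y\eqdef B(y,d/4)$, which lies at distance $\ge 3d/4$ from $\partial\Omega_\rho$. Since $\widetilde{\psi}_{ji}$ is a smooth, hence continuous, map on a neighborhood of $x$ in $\Omega_r$, one can choose an open ball $V_x$ around $x$ so that $\widetilde{\psi}_{ji}(V_x)\subset B(\psi_{ji}(x),d/4)$. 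Any point in this image therefore has modulus $\ge \rho-d/4$, while every point of $V_y$ has modulus $\le\lvert y\rvert+d/4=\rho-3d/4$, so $V_y\cap\widetilde{\psi}_{ji}(V_x)=\emptyset$. In particular no point of $V_y\cap\Omega_{ji}$ is the $\psi_{ji}$-image of any point of $V_x\cap\Omega_{ij}$, which is condition (4).

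Having verified all the requirements, Theorem~\ref{thm:gluing} yields an $N$-dimensional smooth manifold $M$ with an atlas $\{(U_i,\tau_i')\}_{i\in\N_0}$ satisfying $\tau_i'(U_i)=\Omega_i'$ and $\tau_j'\circ(\tau_i')^{-1}=\psi_{ji}'$ on $\Omega_{ij}'$. Composing the chart maps with the identifications $p_i$, i.e. setting $\tau_i\eqdef p_i\circ\tau_i'$, gives an atlas with $\tau_i(U_i)=\Omega_\rho$ and transition maps $\tau_j\circ\tau_i^{-1}=p_j\circ\psi_{ji}'\circ p_i^{-1}=\psi_{ji}:\Omega_{ij}\to\Omega_{ji}$, as desired. \qed
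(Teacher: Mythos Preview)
Your argument is correct and follows the same strategy as the paper: verify that the data constitute a set of gluing data in the sense of Definition~\ref{def: gluing data} and then invoke Theorem~\ref{thm:gluing}. The only point worth flagging is a small technicality in how you manufacture disjointness: Definition~\ref{def: gluing data} asks that the parametrization domains be pairwise disjoint open subsets of $\R^N$, so your labeled copies $\Omega_\rho\times\{i\}\subset\R^N\times\N_0$ do not literally fit the cited statement. The paper handles this by translating inside $\R^N$, setting $\Omega_i'\eqdef z_i+\Omega_\rho$ for an enumeration $(z_i)$ of the lattice $3a\Z^N$ (and shifting $\Omega_{ij}$, $\psi_{ij}$ accordingly), which is an equally trivial device. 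Your treatment of condition~(4) via the continuity of the extended map $\widetilde{\psi}_{ji}$ and the modulus estimate is in fact more explicit than the paper's, which simply observes that hypothesis~(v) forces $\psi_{ji}(x)\in\partial\Omega_\rho$ while $y\in\Omega_\rho$, hence $\psi_{ji}(x)\neq y$, and concludes.
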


\begin{proof}
Fix an enumeration $(z_{i})_{i\in\N_{0}}$ of the lattice $3a\Z^{N}\subset\R^{N}$.
%and let $\mathbb{K}\eqdef\lbrace(i,j)\in\N_{0}\times\N_{0}:\Omega_{ij}\neq\emptyset\rbrace$.
Set $\Omega_{i}'\eqdef z_{i}+\Omega_{\rho}$, $i\in\N_{0}$, and $\Omega_{ij}'\eqdef\Omega_{ij}+z_{i}$,
$\psi_{ij}'\eqdef\psi_{ij}(\cdot-z_{j})+z_{i}$, for $(i,j)\in\mathbb{K}$. 
The corollary is immediate from Theorem \ref{thm:gluing} once we
show that $(\lbrace\Omega'_{i}\rbrace_{i\in\N_{0}},\lbrace\Omega'_{ij}\rbrace_{i,j\in\N_{0}},\lbrace\psi_{ij}'\rbrace_{(i,j)\in\mathbb{K}})$
is a set of gluing data according to Definition \ref{def: gluing data}.
Conditions of the definition verify as follows.

\emph{Condition (1)} is immediate since $3a>2\rho$. %
\begin{comment}
Note that conditions of the definition are satisfied by $\lbrace\Omega'_{ij}\rbrace_{i,j\in\N_{0}}$
and $\lbrace\psi_{ij}'\rbrace_{(i,j)\in\mathbb{K}}$ if and only if
they are satisfied verbatim by $\lbrace\Omega_{ij}\rbrace_{i,j\in\N_{0}}$
and $\lbrace\psi_{ij}\rbrace_{(i,j)\in\mathbb{K}}$ from our lemma,
i.e. without the shifts by $z_{i}$, and with $\Omega_{i}$ replaced
by $\Omega_{\rho}$. 
\end{comment}

\emph{Condition (2).} The sets $\Omega_{ij}$ (and thus $\Omega_{ij}'$)
are open since the maps $\psi_{ji}$ are open. The relation $\Omega_{ij}'\subset\Omega_{i}'$
follows from $\Omega_{ij}\subset\Omega_{\rho}$ in \emph{(ii)}. By
\emph{(i)} we have $\Omega_{ii}=\Omega_{\rho}$ and thus $\Omega_{ii}'=\Omega_{i}'$.
If $\Omega'_{ij}\neq\emptyset$, then $\Omega_{ij}\neq\emptyset$,
and since $\psi_{ij}$ is the inverse of $\psi_{ji}$, $\Omega_{ji}\eqdef\psi_{ji}(\Omega_{\rho}\cap\psi_{ij}\Omega_{\text{\ensuremath{\rho}}})=\psi_{ji}\Omega_{ij}\neq\emptyset$
. Thus $\Omega'_{ji}\neq\emptyset$.

\emph{Conditions (3): properties (a)}, \emph{(b)}, and \emph{(c) are
immediate, respectively, from (i)}, \emph{(iii)}, and \emph{(iv)}.

\emph{Condition (4)}. 
Let  $x\in \partial\Omega'_{ij}\cap \Omega_\rho(z_i)$ and $y\in \partial \Omega'_{ji}\cap\Omega_\rho(z_j)$.   Then $\tilde{x}=x - z_i \in \partial\Omega_{ij}\cap \Omega_\rho$ and $\tilde{x}=x - z_j \in \partial \Omega_{ji}\cap\Omega_\rho(z_j)$. By assumption \emph{(v)} we have $\tilde{y}\not= \psi_{ji}(\tilde{x})$. In consequence there exist Euclidean balls  $\Omega(\tilde{x},\varepsilon)$ and $\Omega(\tilde{y},\varepsilon)$ such that no point of $\Omega(\tilde{y},\varepsilon)\cap \Omega_\rho$ is an image of $\Omega(\tilde{x},\varepsilon)\cap \Omega_\rho$. 
\end{proof}

%%%%%%%%%%%%%%%%%%%%%%%%%%%%%%%%%%%%%%%%%%%%%%%%%%%%%%%%%%%%%%%%%%%%%%%%%%%%%%%%%%%%%%%%%
%%%%%%%%%%%%%%%%%%%%%%%%%%%%%%%%%%%%%%%%%%%%%%%%%%%%%%%%%%%%%%%%%%%%%%%%%%%%%%%%%%%
%%%%%%%%%%%%%%%%%%%%%%%%%%%%%%%%%%%%%%%%%%%%%%%%%%%%%%%%%%%%%%%%%%%%%%%%%%%%%%%

\end{document}